%DOCUMENTCLASS	
\RequirePackage{xr-hyper}
\documentclass[letterpaper, 12pt]{article}
\pdfoutput = 1

%FONT ENCODINGS
\usepackage[english]{babel}
\usepackage[utf8]{inputenc}

%FONT
\usepackage{microtype}
\usepackage{libertineRoman}

%MARGIN SPACING AND INDENTATION
\usepackage[margin = 1.2 in, top = 1 in, bottom = 1.2 in]{geometry}
\setlength{\parindent}{20pt}
\setlength{\marginparwidth}{2cm}

%EQUATION SPACING
\makeatletter
\g@addto@macro\normalsize{%
  \setlength\abovedisplayskip{7pt}
  \setlength\belowdisplayskip{7pt}
  \setlength\abovedisplayshortskip{7pt}
  \setlength\belowdisplayshortskip{7pt}
}
\makeatother

%FOOTNOTE LAYOUT
\interfootnotelinepenalty=10000

%ALPHABETS
\usepackage{amsfonts}
\usepackage{mathrsfs}
\usepackage{bbm}
\usepackage{latexsym}
\usepackage{math dots}
\usepackage{amssymb}
\usepackage{mathtools}

%ENVIRONMENTAL PACKAGES
\usepackage{enumitem}
\setlist{nolistsep}
\usepackage{amsthm}

%COLORS
\usepackage{xcolor}
\definecolor{Color1}{rgb}{0.0, 0.42, 0.47}%Blue
\definecolor{Color2}{rgb}{0.78, 0.11, 0.0}%Scarlet

%SECTION TITLE LAYOUT 
\usepackage{titlesec}
\titleformat{\section}
  {\large\center\bfseries}
  {\thesection.}{.7em}{}
\titlespacing*{\section}{0pt}{3.5ex plus 0ex minus 0ex}{1.5ex plus 0ex}
\titleformat{\subsection}
  {\center\bfseries}
  {\thesubsection.}{.7em}{}
\titlespacing*{\subsection}{0pt}{3.5ex plus 0ex minus 0ex}{1.5ex plus 0ex}
\titleformat{\subsubsection}
  {\center\bfseries}
  {\thesubsubsection.}{.7em}{}
\titlespacing*{\subsubsection}{0pt}{3.5ex plus 0ex minus 0ex}{1.5ex plus 0ex}

%DOCUMENT TITLE LAYOUT
\usepackage{titling}
\setlength{\droptitle}{-4em}

%ABSTRACT TITLE LAYOUT
\makeatletter
\renewenvironment{abstract}{
    \begin{center}
        {\bfseries \large\abstractname\vspace{\z@}}
    \end{center}
    \quotation
}
{\endquotation}
\makeatother

%HYPERREF PACKAGES
\usepackage{hyperref}
\usepackage[capitalize]{cleveref}
\hypersetup{citecolor = Color2,colorlinks,
			linkcolor = black,
			urlcolor = Color2,
            pdfauthor = {Bora \c{C}al{\i}m, Ioannis Iakovakis, Sophie Long, Jack Moffatt, Deborah Wooton},
            pdftitle = {Popular differences in primes along fractional powers}}

%AMSTHM SETUP
\newtheoremstyle{plain}{3mm}{3mm}{\slshape}{}{\bfseries}{.}{.5em}{}
\newtheoremstyle{definition}{2mm}{2mm}{}{}{\bfseries}{.}{.5em}{}
\theoremstyle{plain}
	
\newtheorem{Theorem}{Theorem}

\newtheorem{Lemma}[Theorem]{Lemma}
\newtheorem{Proposition}[Theorem]{Proposition}
\newtheorem{Corollary}[Theorem]{Corollary}
\newtheorem{Conjecture}[Theorem]{Conjecture}
\theoremstyle{definition}
\newtheorem{Definition}[Theorem]{Definition}

%MAIN THEOREMS
\theoremstyle{plain} 
\newcounter{MainTheoremCounter}

%NAMED THEOREMS
\theoremstyle{plain}
\newtheorem*{namedthm}{\namedthmname}
\newcounter{namedthm}
\makeatletter
	
\makeatother
	%Example:
	%\begin{named}{Incompleteness Theorem}{\cite{Godel1931}}
	%\label{theorem:incompleteness}
	% statement of theorem
	%\end{named}
	%In 1931 Kurt G{\"o}del proved the \ref{theorem:incompleteness}.

%NUMBERWITHIN
\counterwithin{Theorem}{section}
\numberwithin{equation}{section}

%ALLOWDISPLAYBREAKS
\allowdisplaybreaks

%BIBLIOGRAPHY LAYOUT
\usepackage{doi}
\usepackage{csquotes}
\usepackage[backend = biber, style = alphabetic, maxbibnames = 99, maxcitenames = 99]{biblatex}
\addbibresource{refs.bib}

%PEEPS
\newcommand{\Cesaro}{Ces\`{a}ro}
\newcommand{\Erdos}{Erd\H{o}s}

\newcommand{\Szemeredi}{Szemer\'{e}di}
\newcommand{\Sarkozy}{S\'{a}rk\"{o}zy}
\newcommand{\Turan}{Tur{\'a}n}

%STANDARD COMMANDS
\newcommand{\Oh}{{\rm O}}
\newcommand{\oh}{{\rm o}}
\newcommand{\N}{\mathbb{N}}
\newcommand{\Z}{\mathbb{Z}}
\newcommand{\R}{\mathbb{R}}
\newcommand{\C}{\mathbb{C}}

%LAYOUT COMMANDS
\newcommand{\define}[1]{{\itshape #1}}

%RENEWED COMMANDS
\renewcommand{\epsilon}{\varepsilon}
\renewcommand{\leq}{\leqslant}
\renewcommand{\geq}{\geqslant}

\renewcommand{\P}{\mathbb{P}}

%MEASURE THEORY COMMANDS
\newcommand{\E}{\mathbb{E}}
\newcommand{\1}{\textbf{1}}

%DOCUMENT SPECIFIC COMMANDS
\DeclarePairedDelimiter\ceil{\lceil}{\rceil}
\DeclarePairedDelimiter\floor{\lfloor}{\rfloor} 
    % \floor*{x} (The asterisk scales the delimiters to match whatever's inside the parentheses.)
\DeclareMathOperator{\Leb}{Leb}
\DeclareMathOperator{\e}{e}

\usepackage{tikz}
\usetikzlibrary{graphs, quotes}
\usepackage{float}

% =======================================================================
% =======================================================================

%TITLE
\title{\bfseries Popular differences in primes along fractional powers}
\date{\small{November 26, 2024}}
\author{By {\scshape Bora \c{C}al{\i}m}, {\scshape Ioannis Iakovakis}, {\scshape Sophie Long,}\\
{\scshape Jack Moffatt}, and {\scshape Deborah Wooton}}

%BEGIN DOCUMENT
\begin{document}

\maketitle

%ABSTRACT
\begin{abstract}
    We prove that $\mathop{\E}_{m \leq M} \mathop{\E}_{n \leq N} \Lambda(n) \Lambda\bigl(n + \floor{m^c}\bigr) = 1 + \Oh(\log^{2 - Bc} N)$, where $c > 2$ is a non-integer, $B \geq 3/c$, and $M$ is of order $N^{1/c} \log^{-B} N$. As a combinatorial consequence, we obtain that the primes contain infinitely many pairs whose difference belongs to the Piatetski-Shapiro sequence $\bigl\{\floor{m^c} \colon m \in \N \bigr\}$ for any non-integer $c > 2$.
\end{abstract}

%BACKGROUND
\section{Introduction}

In the '70s, \Sarkozy{} \cite{sarkozy1978} proved that any subset of integers with positive upper density contains two elements differing by a perfect square. In fact, he proved that if a set does not contain two elements differing by a perfect square, then its density up to $N$ is $\Oh\bigl((\log\log N)^{2/3} (\log N)^{-1/3}\bigr)$. Although \Sarkozy{}'s result does not cover the case of the primes, a better quantitative bound was later given by Pintz, Steiger, and \Szemeredi{} \cite{pintzSteigSzemeredi1988}, which allows one to conclude that the primes contain infinitely many square differences due to density reasons alone. This bound has since been refined; see \cite{bloomMaynard2022}.

As it turns out, one can replace $P(x) = x^2$ in \Sarkozy{}'s theorem with any intersective polynomial, i.e., any polynomial that has a root modulo every prime. The best quantitative bound in this case is due to Rice \cite{rice2019}, which implies that the primes (and any set with density comparable to the primes) also contain infinitely many pairs of elements differing by a number in the image of any intersective polynomial.

Once one knows that the primes exhibit polynomial differences, it becomes natural to ask about the frequency with which these patterns occur. This question was first addressed by Tao and Ziegler (\cite{taoZiegler2008}, \cite{taoZiegler2018}), who provided asymptotic estimates for a wide range of polynomial patterns in the primes. As a special case\footnote{The constant term of 1 in \cref{eqn:tao-ziegler} can be deduced from \cite[Theorem 4]{taoZiegler2018}.}, they proved that
\begin{equation}\label{eqn:tao-ziegler}
    \mathop{\E}_{m \leq M} \mathop{\E}_{n \leq N} \Lambda(n) \Lambda(n + m^k) 
    = 1 + \oh_{N\to\infty}(1),
\end{equation}
with $M = \oh(N^{1/k})$ and $M \geq \omega(N)N^{1/k}$ for some function $\omega$ going to zero sufficiently slowly depending on $k$. Here, $\E$ stands for the \Cesaro{} average (see \cref{def:cesaro}) and $\Lambda$ denotes the von Mangoldt function (see \cref{def:von-mangoldt}).

In this paper, we investigate
pairs of primes differing by a fractional power instead of an integer power. For this we consider a variant of \cref{eqn:tao-ziegler} of the form
\begin{equation}\label{eqn:avg}
    \mathop{\E}_{m \leq M} \mathop{\E}_{n \leq N} \Lambda(n) \Lambda\bigl(n + \floor{m^c} \bigr),
\end{equation}
where $\floor{m^c}$ is a Piatetski-Shapiro sequence for some non-integer $c>2$.

To our knowledge, averages as in \cref{eqn:avg} have not been considered before, and it is not known whether the primes contain infinitely many pairs whose difference is of the form~$\floor{m^c}$.
However, based on the heuristic that the map $m \mapsto \floor{m^c}$ nullifies multiplicative structure -- a principle supported by numerous examples in the literature, albeit in different contexts (e.g., \cite{delmer2002}) -- it is natural to conjecture that the average in \cref{eqn:avg} behaves similarly to the Tao-Ziegler case.
Our main result confirms this prediction.

\begin{Theorem}\label{thm:main-thm}
    Let $c > 2$ be a non-integer.
    For any $A \geq 1$ , if $B = \frac{A + 2}{c} $ and $M = \frac{N^{1/c}}{\log^B N}$, then
    \[
        \mathop{\E}_{m \leq M} \mathop{\E}_{n \leq N} \Lambda(n) \Lambda\bigl(n + \floor{m^c} \bigr) 
         = 1 + \Oh\biggl(\frac{1}{\log^A N} \biggr).
    \]
\end{Theorem}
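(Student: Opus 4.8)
The plan is to run the circle method on $\sum_{m\le M}\sum_{n\le N}\Lambda(n)\Lambda\bigl(n+\lfloor m^c\rfloor\bigr)$. The structural point — and the reason the theorem needs only the prime number theorem as arithmetic input — is that for non-integer $c$ the Piatetski--Shapiro exponential sum $K(\alpha):=\sum_{m\le M}e\bigl(\lfloor m^c\rfloor\alpha\bigr)$ has a genuine main term only near $\alpha=0$ (since $\lfloor m^c\rfloor$ equidistributes in every residue class to every modulus $q\ge 2$), so a single major arc suffices and nothing is needed about primes in progressions. Concretely: reduce the Cesàro averages to ordinary sums at the cost of $\Oh(1/M)$; note $\lfloor m^c\rfloor\le M^c=N\log^{-A-2}N<N$; and, with $\Lambda_X:=\Lambda\cdot\mathbf 1_{[1,X]}$ and $\widehat f(\alpha):=\sum_n f(n)e(n\alpha)$, use the identity
\[
 \sum_{m\le M}\sum_{n\le N}\Lambda(n)\Lambda\bigl(n+\lfloor m^c\rfloor\bigr)=\int_0^1\overline{\widehat{\Lambda_N}(\alpha)}\,\widehat{\Lambda_{2N}}(\alpha)\,\overline{K(\alpha)}\,d\alpha .
\]
Split $[-\tfrac12,\tfrac12]$ into a major arc $\mathfrak M=[-\Delta,\Delta]$ and its complement $\mathfrak m$, with $\Delta=\log^aN/N$ and $a:=(A+2)(c+1)$; since $N=M^c\log^{A+2}N$ this gives $\Delta=M^{-c}\log^{(A+2)c}N$ and $\Delta^{-1/c}=M\log^{-A-2}N$.

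On $\mathfrak M$, the prime number theorem with classical error term gives $\widehat{\Lambda_N}(\alpha)=I_N(\alpha)+\Oh\bigl(N\log^{-C}N(1+|\alpha|N)\bigr)$ for any $C$, where $I_N(\alpha):=\int_0^N e(t\alpha)\,dt$ satisfies $|I_N(\alpha)|\le\min\bigl(N,\tfrac1{\pi|\alpha|}\bigr)$. Replacing $\widehat{\Lambda_N},\widehat{\Lambda_{2N}}$ by $I_N,I_{2N}$, using $|K|\le M$, Cauchy--Schwarz, and the $L^2$ bounds $\|\widehat{\Lambda_{2N}}\|_2^2=\sum_{n\le 2N}\Lambda(n)^2\ll N\log N$ (Parseval) and $\|I_N\|_{L^2(\mathfrak M)}^2\ll N$, the major-arc integral becomes $\int_{\R}\overline{I_N}\,I_{2N}\,\overline K\,d\alpha$ up to an error $\Oh(M/\Delta)+\Oh(MN\log^{-A}N)$; and $\int_{\R}\overline{I_N}\,I_{2N}\,\overline K\,d\alpha=\lfloor M\rfloor N$ exactly, by Fourier inversion ($\overline{I_N}\,I_{2N}$ being the Fourier transform of a trapezoid that equals $N$ throughout $[0,N]\supseteq\{\lfloor m^c\rfloor\colon m\le M\}$). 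As $M/\Delta=MN\log^{-a}N\ll MN\log^{-A}N$, the major arc contributes $MN+\Oh(MN\log^{-A}N)$; this is exactly where the relation $B=(A+2)/c$ enters, balancing $M/\Delta$ against $\Delta^{-1/c}$.

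On $\mathfrak m$, $\bigl|\int_{\mathfrak m}\overline{\widehat{\Lambda_N}}\,\widehat{\Lambda_{2N}}\,\overline K\,d\alpha\bigr|\le\bigl(\sup_{\mathfrak m}|K|\bigr)\|\widehat{\Lambda_N}\|_2\|\widehat{\Lambda_{2N}}\|_2\ll\bigl(\sup_{\mathfrak m}|K|\bigr)N\log N$, so it suffices to prove $\sup_{\Delta\le\|\alpha\|\le 1/2}|K(\alpha)|\ll M\log^{-A-1}N$. For $\Delta\le\|\alpha\|\le M^{-c+\delta_0}$, with a fixed $\delta_0=\delta_0(c)\in(0,1)$, this is elementary: writing $n=\lfloor m^c\rfloor$ and using
\[
 \bigl\lfloor(n+1)^{1/c}\bigr\rfloor-\bigl\lfloor n^{1/c}\bigr\rfloor=\tfrac1c\,n^{1/c-1}+\bigl\{n^{1/c}\bigr\}-\bigl\{(n+1)^{1/c}\bigr\}+\Oh\bigl(n^{1/c-2}\bigr)
\]
one finds $K(\alpha)=\tfrac1c\sum_{n\le M^c}n^{1/c-1}e(n\alpha)+\Oh(\|\alpha\|M^c)+\Oh(1)$; Abel summation against the decreasing weight $n^{1/c-1}$ (comparing partial sums of $e(n\alpha)$ with $\int t^{1/c-1}e(t\alpha)\,dt$) bounds the first term by $\|\alpha\|^{-1/c}\le\Delta^{-1/c}=M\log^{-A-2}N$, while $\|\alpha\|M^c\le M^{\delta_0}$, so $|K(\alpha)|\ll M\log^{-A-1}N$.

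The range $M^{-c+\delta_0}\le\|\alpha\|\le\tfrac12$ is the heart of the matter and the main obstacle: here one needs genuine power saving $|K(\alpha)|\ll_c M^{1-\rho(c)}$ with $\rho(c)>0$. I would smooth the floor function via Vaaler's trigonometric approximation to the sawtooth $\{x\}-\tfrac12$, so that $K(\alpha)$ equals, up to a power-saving error, $\sum_{m\le M}e(\alpha m^c)\,\Phi_\alpha(m^c)$ with $\Phi_\alpha$ a smooth $1$-periodic function; expanding $\Phi_\alpha$ in its (rapidly convergent) Fourier series reduces the task to bounding $\sum_{m\le M}e\bigl((\alpha+k)m^c\bigr)$ for $|k|$ up to a small power of $M$. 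Each such sum is handled by van der Corput's method — the exponent pair $A^{\lceil c\rceil-1}B(0,1)$ when $|\alpha+k|M^{c-1}$ is large, the $B$-process (stationary phase) when it is of moderate size, and comparison with $\int e\bigl((\alpha+k)t^c\bigr)\,dt$ when it is small — each of these being uniform in the frequency, in contrast to Weyl's inequality; this uniformity is what makes the bound hold for all $\alpha$ simultaneously. Combining the two ranges gives $\sup_{\mathfrak m}|K|\ll M\log^{-A-1}N$, hence a minor-arc contribution $\Oh(MN\log^{-A}N)$. Together with the major arc this yields $\sum_{m\le M}\sum_{n\le N}\Lambda(n)\Lambda\bigl(n+\lfloor m^c\rfloor\bigr)=MN+\Oh(MN\log^{-A}N)$, and dividing by $MN$ completes the proof.
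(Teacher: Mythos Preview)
Your approach is correct in outline and takes a genuinely different route from the paper. The paper works with the discrete Fourier transform on $\mathbb{Z}/N\mathbb{Z}$ and splits the nonzero frequencies at $\xi=N/\log^b N$: for small $\xi$ (the range $\Sigma_1$) it decouples via H\"older, bounds $\sup_\xi|\hat\Lambda(\xi)|$ through Vinogradov's theorem (\cref{thm:vinogradov}) together with an elementary argument for very small $\xi$, and controls the $2k$-th moment of the Piatetski--Shapiro sum by Poulias's count of solutions to $|m_1^c+\cdots-m_{2k}^c|<k$ (\cref{thm:poulias}); for large $\xi$ (the range $\Sigma_2$) it uses the Erd\H{o}s--Tur\'an inequality plus van der Corput, essentially as in your treatment of $\|\alpha\|\ge M^{-c+\delta_0}$. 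Your key departure is the intermediate range $\Delta\le\|\alpha\|\le M^{-c+\delta_0}$: rewriting $K(\alpha)$ as $\sum_{n\le M^c}\bigl(\lfloor(n+1)^{1/c}\rfloor-\lfloor n^{1/c}\rfloor\bigr)e(n\alpha)$ and Abel-summing gives $|K(\alpha)|\ll\|\alpha\|^{-1/c}+\|\alpha\|M^c$, and this pointwise bound combined with Parseval alone ($\|\widehat{\Lambda_N}\|_2^2\ll N\log N$) closes the minor-arc estimate without touching either Vinogradov or Poulias.

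This buys something real. The paper's restriction $c>2$ is attributed explicitly to Poulias's theorem (see the remark after \cref{cor:infinitely-many-pairs}), and your argument never invokes it: the elementary bound $|K(\alpha)|\ll\|\alpha\|^{-1/c}+\|\alpha\|M^c$ is valid for any $c>1$, and van der Corput with $q=\lceil c-2\rceil$ gives power saving for all non-integer $c>1$. So if the details in the last range are filled in carefully, your method should extend to $1<c<2$, the case the paper leaves open in its Further Directions. One point to tighten: the phrase ``expanding $\Phi_\alpha$ in its (rapidly convergent) Fourier series'' is not accurate, since $\Phi_\alpha(y)=e(-\alpha\{y\})$ has a jump of size $|1-e(-\alpha)|$ and its Fourier coefficients decay only like $1/|k|$; one cannot simply truncate and bound the tail trivially. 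You need precisely the Erd\H{o}s--Tur\'an or Vaaler machinery here (as the paper does in \cref{lem:partition-bound}--\cref{lem:turan-bound}), or an equivalent device, to control the truncation error. This is routine but should be stated more carefully than the current sketch.
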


This theorem immediately implies the following combinatorial corollary.
\begin{Corollary}\label{cor:infinitely-many-pairs}
    Let $c > 2$ be a non-integer. Then there are infinitely many pairs $(p, m)$ such that $p$ and $p + \floor{m^c}$ are both prime.
\end{Corollary}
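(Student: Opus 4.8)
The plan is to deduce \cref{cor:infinitely-many-pairs} from \cref{thm:main-thm} by a routine positivity-and-counting argument, since the theorem already supplies the asymptotic $1 + \Oh(\log^{-A} N)$ for the normalized average. First I would fix any admissible value of the parameter, say $A = 1$, so that with $B = 3/c$ and $M = N^{1/c}\log^{-B} N$ the theorem gives, after clearing the \Cesaro{} normalizations,
\[
    \sum_{m \leq M} \sum_{n \leq N} \Lambda(n)\,\Lambda\bigl(n + \floor{m^c}\bigr) \asymp MN \longrightarrow \infty.
\]
In particular the left-hand side is positive once $N$ is large, so there is at least one pair $(n,m)$ with $n \leq N$, $m \leq M$, and both $n$ and $n + \floor{m^c}$ in the support of $\Lambda$, i.e.\ prime powers.

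Next I would upgrade ``prime power'' to ``prime''. Since $m \leq M$ implies $\floor{m^c} \leq M^c = N\log^{-Bc} N \leq N$, every argument $n + \floor{m^c}$ occurring above lies in $[1, 2N]$, so $\Lambda(n), \Lambda\bigl(n + \floor{m^c}\bigr) \ll \log N$ throughout the summation range. Using the elementary bound $\sum_{n \leq x,\ n\text{ a proper prime power}} \Lambda(n) \ll x^{1/2}\log x$, the contribution of the terms in which $n$ is a proper prime power is $\ll M \cdot N^{1/2}\log N \cdot \log N = M N^{1/2}\log^2 N$; the contribution of the terms in which $n + \floor{m^c}$ is a proper prime power is bounded the same way, summing over $m$ first and then over the (at most $2N$) possible values of $n + \floor{m^c}$, again using $\Lambda(n) \ll \log N$ for the remaining factor. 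Both error terms are $\oh(MN)$, so for $N$ large enough
\[
    \sum_{\substack{m \leq M,\ n \leq N\\ n \text{ and } n + \floor{m^c}\text{ both prime}}} \Lambda(n)\,\Lambda\bigl(n + \floor{m^c}\bigr) \gg MN.
\]

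Finally, since each surviving summand is $\ll \log^2 N$, the number of pairs $(p,m)$ with $p \leq N$, $m \leq M$, and $p$, $p + \floor{m^c}$ both prime is $\gg MN\log^{-2} N = N^{1 + 1/c}\log^{-(2+B)} N$, which tends to infinity with $N$; hence there are infinitely many such pairs. (Equivalently, if there were only finitely many pairs $(p,m)$, then the primes $p$ and the integers $m$ occurring among them would all be bounded, forcing the displayed double sum to be $\Oh(1)$ and contradicting the lower bound $\gg MN$.) I do not anticipate a genuine obstacle in this deduction; the only points needing care are the bookkeeping of the \Cesaro{} normalization and the observation $\floor{m^c} \leq N$, which guarantees that the second von Mangoldt factor has argument of size $\Oh(N)$ and thereby legitimizes the proper-prime-power removal.
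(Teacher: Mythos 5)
Your deduction is correct and is essentially the argument the paper has in mind (the paper simply asserts that Theorem 1.1 "immediately implies" the corollary and notes the resulting count $\approx NM\log^{-2}N$ of pairs with $p\leq N$, $m\leq M$). You have merely spelled out the routine bookkeeping the paper leaves implicit, in particular the standard removal of the $\Oh(MN^{1/2}\log^2 N)$ contribution from proper prime powers, which is indeed negligible compared to the $\asymp MN$ main term.
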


In fact the theorem gives $\approx NM \log^{-2} N$ many such pairs with $p \leq N, m \leq M$, which is the expected asymptotic if we assume ``independence" of $\Lambda(n)$ and $\Lambda\bigl(n + \floor{m^c}\bigr)$ (see \cref{notation} for the precise meaning of \(\approx\)). The restriction $c > 2$ is due to technical reasons: a lemma used in \cite{poulias2021} breaks down. We expect the result to hold with $c > 0$, but this is unproven as of yet. Note that when $0 < c < 1$, we trivially get infinitely many pairs of primes differing by some integer of the form $\floor{m^c}$, since the image of $\floor{m^c}$ is all positive integers.

%THANKING PEOPLE
\subsection{Acknowledgments}
We would like to thank the Bernoulli Center for funding the Young Researchers in Mathematics Summer Program. We are also grateful to Florian Richter and Felipe Hern{\'a}ndez for their mentorship and guidance throughout the program. Finally, we would like to thank Nadia Kaiser for her administrative assistance in organizing our travel and stay, as well as those who contributed anonymous feedback.

%PRELIMINARIES
\section{Preliminaries}

%WHAT THE SYMBOLS WE USE MEAN
\subsection{Notation}\label{notation}

\hspace{20pt} In this paper, we use the $\Oh(\cdot)$ and $\oh(\cdot)$ notations. For functions $f, g \colon \R \to \C$, $f(x) = \Oh\bigl(g(x)\bigr)$ means $\lvert f \rvert \leq Cg$ for some constant $C$, and $f(x) = \oh\bigl(g(x)\bigr)$ means $\lvert f/g \rvert \to 0$ as $x \to \infty$. We also use the Vinogradov notations $\lesssim$ and $\approx$, where $f \lesssim g$ means $f = \Oh(g)$ and $f \approx g$ means $f \lesssim g \lesssim f$. The notation $\floor{x}$ refers to the floor function, the largest integer $n$ such that $n \leq x$. Similarly, the notation $\{x\}$ refers to the fractional part of $x$, given by $x - \floor{x}$.

We define the \define{\Cesaro{} average} of a function $f$ over positive integers $n \leq x$ as 
\begin{equation}\label{def:cesaro}
    \mathop{\E}_{n \leq x} f(n) = \frac{1}{\floor{x}} \sum_{n = 1}^{\floor{x}} f(n).
\end{equation}
Finally, \(\e(\cdot)\) denotes the function \(\e(x) = e^{2 \pi i x}\) and \(\1_{X}(\cdot)\) denotes the indicator function of the set \(X\), i.e.,
\[
    \1_{X}(x) = \begin{cases}
        1 &\text{if } x \in X\\
        \hfil 0 &\text{else}
    \end{cases}.
\]

%REFERENCED RESULTS
\subsection{Basic Notions}

We begin with some definitions and standard results regarding Fourier analysis on $\Z/N\Z$ for some $N \in \mathbb{N}$. 

\begin{Definition}
    The \define{discrete Fourier transform} of a function $f \colon \Z/N\Z \to \C$ is defined for all $\xi \in \Z/N\Z$ as
    \[
        \hat{f}(\xi) = \sum_{n \in \Z/N\Z} f(n) \e\biggl(\! -\frac{\xi n}{N} \biggr).
    \]
\end{Definition}

The following two theorems are standard number theoretic results.

\begin{Theorem}[Fourier inversion]\label{thm:fourier}
    Any function $f \colon \Z/N\Z \to \C$ can be fully recovered from its Fourier coefficients using the formula
    \[
        f(n)=\frac{1}{N}\sum_{\xi \in \mathbb{Z}/N\mathbb{Z}} \hat{f}(\xi) \e\biggl(\frac{\xi n}{N}\biggr).
    \]
\end{Theorem}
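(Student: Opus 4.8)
The plan is to verify the formula by direct substitution, using the discrete orthogonality of additive characters on $\Z/N\Z$. First I would write out the right-hand side by expanding the definition of $\hat f$: for a fixed $n \in \Z/N\Z$,
\[
    \frac{1}{N} \sum_{\xi \in \Z/N\Z} \hat f(\xi)\, \e\biggl(\frac{\xi n}{N}\biggr)
    = \frac{1}{N} \sum_{\xi \in \Z/N\Z} \biggl( \sum_{m \in \Z/N\Z} f(m)\, \e\biggl(\!-\frac{\xi m}{N}\biggr) \biggr) \e\biggl(\frac{\xi n}{N}\biggr).
\]
Since both sums are finite, I would interchange the order of summation to pull $f(m)$ out, obtaining
\[
    \frac{1}{N} \sum_{m \in \Z/N\Z} f(m) \sum_{\xi \in \Z/N\Z} \e\biggl(\frac{\xi(n - m)}{N}\biggr).
\]

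The crux is then the inner character sum $S(k) := \sum_{\xi \in \Z/N\Z} \e(\xi k / N)$ for an integer $k = n - m$. The key step — and the only nontrivial point — is to show $S(k) = N$ if $k \equiv 0 \pmod N$ and $S(k) = 0$ otherwise. When $k \equiv 0$, every term $\e(\xi k/N)$ equals $1$, so the sum is $N$. When $k \not\equiv 0$, the quantity $\zeta := \e(k/N)$ is an $N$-th root of unity with $\zeta \neq 1$, and $S(k) = \sum_{j=0}^{N-1} \zeta^j = \frac{\zeta^N - 1}{\zeta - 1} = 0$ since $\zeta^N = \e(k) = 1$. (One should note that although $\xi$ ranges over $\Z/N\Z$, the function $\xi \mapsto \e(\xi k/N)$ is well-defined on residue classes, so this reindexing is legitimate.)

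Substituting this evaluation back, only the term $m \equiv n \pmod N$ survives in the sum over $m \in \Z/N\Z$, and it contributes $\frac{1}{N} \cdot f(n) \cdot N = f(n)$. This completes the verification. I anticipate no real obstacle here; the main thing to be careful about is the bookkeeping that all functions and exponentials involved are genuinely well-defined on $\Z/N\Z$ (equivalently, $N$-periodic on $\Z$), so that the interchange of summation and the geometric-series evaluation are valid. Since this is a standard result, a brief argument along these lines suffices.
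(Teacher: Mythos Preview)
Your proof is correct and is the standard argument via orthogonality of characters. The paper does not actually give a proof of this statement; it simply records Fourier inversion as a ``standard number theoretic result'' and moves on, so there is nothing to compare against.
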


\begin{Theorem}[Parseval's identity]\label{thm:parseval}
    For any $f \colon \Z/N\Z \to\C$,
    \[
        \sum_{n \in \Z/N\Z} \lvert f(n) \rvert^2 = \frac{1}{N} \sum_{\xi \in \Z/N\Z} \bigl\lvert \hat{f}(\xi) \bigr\rvert^2.
    \]
\end{Theorem}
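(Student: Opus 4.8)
The plan is to prove the identity by expanding the right-hand side directly and invoking the orthogonality of the additive characters $n \mapsto \e(\xi n/N)$ of $\Z/N\Z$. Writing $\lvert \hat f(\xi)\rvert^2 = \hat f(\xi)\,\overline{\hat f(\xi)}$ and noting that $\overline{\hat f(\xi)} = \sum_{m \in \Z/N\Z} \overline{f(m)}\,\e(\xi m/N)$, I would substitute the definition of the Fourier transform into both factors to get
\[
    \sum_{\xi \in \Z/N\Z}\lvert\hat f(\xi)\rvert^2 = \sum_{\xi \in \Z/N\Z}\sum_{n \in \Z/N\Z}\sum_{m \in \Z/N\Z} f(n)\overline{f(m)}\,\e\!\Bigl(\tfrac{\xi(m-n)}{N}\Bigr).
\]
Because all three sums are finite, I may interchange freely and carry out the sum over $\xi$ first.

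The only genuine content is the character orthogonality relation: for a fixed $k \in \Z/N\Z$ (here $k = m-n$),
\[
    \sum_{\xi = 0}^{N-1}\e\!\Bigl(\tfrac{\xi k}{N}\Bigr) = \begin{cases} N & \text{if } k \equiv 0 \pmod N,\\ 0 & \text{otherwise.}\end{cases}
\]
This I would justify by the finite geometric series formula: if $k \not\equiv 0$ then $\e(k/N) \neq 1$, so the sum equals $\bigl(\e(k)-1\bigr)/\bigl(\e(k/N)-1\bigr) = 0$ since $\e(k) = 1$; and if $k \equiv 0$ then every term is $1$. Substituting this back annihilates all terms except those with $m \equiv n \pmod N$, leaving $N\sum_{n \in \Z/N\Z}\lvert f(n)\rvert^2$, and dividing by $N$ gives the claim.

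Alternatively, and even more briefly, I could deduce the identity straight from Fourier inversion (\cref{thm:fourier}), which is already available: replace one factor in $\sum_{n}f(n)\overline{f(n)}$ by $\overline{\tfrac1N\sum_{\xi}\hat f(\xi)\e(\xi n/N)}$, pull the conjugate inside, and swap the sums over $n$ and $\xi$; the inner sum over $n$ then reassembles $\hat f(\xi)$, and one arrives again at $\tfrac1N\sum_{\xi}\lvert\hat f(\xi)\rvert^2$. Either route is short; I would likely present the first since it is self-contained.

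As for obstacles: there are essentially none — this is a textbook fact and the proof is a two-line rearrangement once the geometric-series evaluation of $\sum_{\xi}\e(\xi k/N)$ is in hand. The only points that warrant (minor) care are keeping track of the complex conjugate on the second factor and stating explicitly that the character sum vanishes exactly when $N \nmid k$.
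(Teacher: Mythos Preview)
Your proof is correct; both routes you describe are the standard ones and either would work without issue. Note, however, that the paper does not actually prove this statement: it is listed as one of two ``standard number theoretic results'' and stated without proof, so there is no argument in the paper to compare against.
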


\begin{Definition}\label{def:von-mangoldt}
    The \define{von Mangoldt function} \(\Lambda \colon \Z \to \C\) is given by
    \[
        \Lambda(n) =
        \begin{cases}
            \log(p) &\text{if } n = p^k, \text{ where } p \text{ is prime and } k \geq 1 \text{ is an integer}\\
            \hfil 0 &\text{else}
        \end{cases}.
    \]
    We will denote the Fourier transform of the von Mangoldt function restricted to \(\{1, \ldots, N\}\) and viewed as a function on \(\Z/N\Z\) by \(\hat{\Lambda}\).
\end{Definition}

The following estimate, equivalent to the Prime Number Theorem, will prove useful.
\begin{Theorem}[\protect{\cite[Theorem 6.9]{montgomeryVaughan2006}}]\label{thm:prime-number-equiv}
   We have
    \[
        \frac{1}{N} \sum_{n = 1}^{N} \Lambda(n) = 1 + \Oh \biggl(\frac{1}{e^{t \sqrt{\log N}}} \biggr),
    \]
    where $t$ is a positive constant.
\end{Theorem}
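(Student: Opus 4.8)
\textbf{Plan of proof for Theorem~\ref{thm:prime-number-equiv}.}
The statement is the classical error term in the Prime Number Theorem for the second Chebyshev function $\psi(N) = \sum_{n \leq N} \Lambda(n)$, namely $\psi(N) = N + \Oh\bigl(N e^{-t\sqrt{\log N}}\bigr)$ for some absolute $t > 0$, divided through by $N$. The plan is to recover it by the standard contour-integral route using zero-free region information for the Riemann zeta function $\zeta(s)$. First I would recall the explicit relation $-\zeta'/\zeta(s) = \sum_{n \geq 1} \Lambda(n) n^{-s}$ for $\Re s > 1$, so that $\psi(N)$ is (up to boundary corrections) the inverse Mellin transform of $-\zeta'/\zeta(s)$ against $N^s/s$. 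Concretely, one uses the truncated Perron formula
\[
    \psi(N) = \frac{1}{2\pi i}\int_{\sigma_0 - iT}^{\sigma_0 + iT} \Bigl(-\frac{\zeta'}{\zeta}(s)\Bigr)\frac{N^s}{s}\,ds + \Oh\!\Bigl(\frac{N \log^2 N}{T}\Bigr)
\]
with $\sigma_0 = 1 + 1/\log N$ and a parameter $T$ to be chosen later.

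The second step is to move the contour to the left, picking up residues. The integrand has a simple pole at $s = 1$ coming from the pole of $\zeta$, with residue exactly $N$ (this is the main term), and a pole at $s = 0$ contributing $-\zeta'/\zeta(0) = \log(2\pi)$, which is $\Oh(1)$ and absorbed into the error. One must also know that $\zeta(s) \neq 0$ in a region of the form $\Re s \geq 1 - \frac{c_0}{\log(\lvert \Im s\rvert + 2)}$ — the classical de la Vallée Poussin zero-free region — so that $-\zeta'/\zeta$ is holomorphic there apart from the pole at $s=1$; inside this region one also needs the bound $-\zeta'/\zeta(s) = \Oh\bigl(\log^2(\lvert \Im s \rvert + 2)\bigr)$. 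Shifting the contour to the vertical line $\Re s = 1 - \frac{c_0}{2\log T}$ and estimating the three remaining pieces (the left vertical segment and the two horizontal segments) against $N^{1 - c_0/(2\log T)}$ times polynomial-in-$\log$ factors yields
\[
    \psi(N) = N + \Oh\!\Bigl(\frac{N\log^2 N}{T} + N (\log^2 T)\, e^{-\frac{c_0 \log N}{2\log T}}\Bigr).
\]

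The third step is the optimization: choosing $\log T = c_1 \sqrt{\log N}$ for a suitable small constant $c_1$ balances the two terms and produces the bound $\psi(N) = N + \Oh\bigl(N e^{-t\sqrt{\log N}}\bigr)$ for an explicit $t = t(c_0, c_1) > 0$. Dividing by $N$ gives exactly the asserted formula $\frac{1}{N}\sum_{n\leq N}\Lambda(n) = 1 + \Oh\bigl(e^{-t\sqrt{\log N}}\bigr)$.

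The main obstacle is not any single computation but the input it rests on: establishing the de la Vallée Poussin zero-free region together with the attendant growth bound on $\zeta'/\zeta$. This in turn uses the nonvanishing of $\zeta$ on $\Re s = 1$ via the $3$-$4$-$1$ trigonometric inequality $3 + 4\cos\theta + \cos 2\theta \geq 0$ applied to $\log\lvert \zeta(\sigma)^3 \zeta(\sigma+it)^4 \zeta(\sigma+2it)\rvert$, quantified to push the zeros a small distance to the left of the line, plus standard estimates for $\zeta$ and $\zeta'$ in the critical strip (e.g. via the approximate functional equation or Stirling-type bounds). Since the paper cites this as \cite[Theorem 6.9]{montgomeryVaughan2006}, in practice I would simply invoke that reference; the sketch above is the proof it packages. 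Everything downstream — Perron's formula with its error term, the residue computation, and the final optimization of $T$ — is routine once the zero-free region is in hand.
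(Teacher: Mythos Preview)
Your proposal is correct, and in fact the paper does not prove this statement at all: it is quoted as \cite[Theorem~6.9]{montgomeryVaughan2006} and used as a black box. Your sketch is precisely the classical argument that reference packages, and you already note as much in your final paragraph, so there is nothing to compare.
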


In subsequent sections, we will use the following result of Vinogradov about asymptotics for the Fourier transformation of the von Mangoldt function.

\begin{Theorem}[Vinogradov \protect{\cite[p. 143]{davenport1967}}]\label{thm:vinogradov}
    There exists a constant $C$ such that the following holds: If there exist $q \in \{1, \ldots, N\}$ and $a \in \{0, 1, \ldots, q - 1\}$ such that $\gcd(a,q) = 1$ and $\bigl\lvert \frac{\xi}{N} - \frac{a}{q} \bigr\rvert \leq \frac{1}{q^2}$, then
    \[
        \lvert \hat{\Lambda}(\xi) \rvert \leq C \biggl(\frac{N}{\sqrt{q}} + N^{\frac{4}{5}} + \sqrt{qN} \biggr) \log^4 N.
    \]
\end{Theorem}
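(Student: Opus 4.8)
\textbf{Proof proposal for \cref{thm:vinogradov}.}

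The plan is to establish the Vinogradov bound for the exponential sum over primes by the classical bilinear-forms method, built on Vaughan's identity and a Dirichlet approximation of the frequency. Write $S(\alpha) = \sum_{n \leq N} \Lambda(n) \e(\alpha n)$, where $\alpha = \xi/N$; note that $\hat{\Lambda}(\xi) = S(-\alpha)$ up to replacing $\alpha$ by $-\alpha$ (which does not affect the modulus), so it suffices to bound $|S(\alpha)|$ under the hypothesis that there exist coprime $a, q$ with $q \leq N$ and $|\alpha - a/q| \leq 1/q^2$.

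First I would apply Vaughan's identity: for parameters $U, V \geq 1$ with $UV \leq N$, one decomposes $\Lambda(n) = \Lambda(n)\1_{n \leq U} + (\text{a ``Type I'' sum}) + (\text{a ``Type II'' sum})$, obtaining $S(\alpha) = S_0 + S_1 - S_2 - S_3$ where the pieces have the shapes
\[
    S_1 = \sum_{d \leq UV} \Bigl(\sum_{\substack{t \leq V}} \mu(d') \cdots \Bigr) \sum_{m \leq N/d} \e(\alpha d m), \qquad S_2, S_3 = \sum_{U < k \leq N/V} a_k \sum_{V < \ell \leq N/k} b_\ell \, \e(\alpha k \ell),
\]
with $|a_k| \leq d(k)\log N$ and $|b_\ell| \leq 1$ or $|b_\ell| \leq \Lambda(\ell)$. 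The trivial piece $S_0$ is $\leq U$. The Type I sums $S_1$ are handled by summing the inner geometric series to get $\sum_{m \leq N/d}\e(\alpha d m) \ll \min(N/d,\ \|\alpha d\|^{-1})$ and then invoking the standard estimate $\sum_{d \leq X} \min(N/d,\ \|\alpha d\|^{-1}) \ll (N/q + X + q)\log(2Nq)$, valid whenever $|\alpha - a/q| \leq 1/q^2$; after absorbing the divisor-function factors this contributes $\ll (N/q + UV + q)\log^c N$ for a small fixed $c$.

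The main obstacle — and the heart of the argument — is the Type II bilinear sum. Here I would apply Cauchy--Schwarz in the $k$ variable, square out the $\ell$-sum, swap the order of summation, and reduce to bounding $\sum_{\ell_1, \ell_2} \min\bigl(N,\ \|\alpha(\ell_1 - \ell_2)k_0\|^{-1}\bigr)$-type quantities; diagonal terms $\ell_1 = \ell_2$ give the ``$N$'' contribution while off-diagonal terms are again controlled by the $\sum \min(\cdot,\ \|\cdot\|^{-1})$ lemma. Optimizing, this yields a bound of the form $\bigl(N q^{-1/2} + N^{4/5} + N^{1/2}q^{1/2}\bigr)\log^c N$ after choosing $U = V = N^{2/5}$ (the $N^{4/5}$ term is exactly $(UV)$ with this choice, reflecting the loss from squaring). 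Finally I would collect the contributions of $S_0, S_1, S_2, S_3$, note that $S_0 \leq U = N^{2/5} \leq N^{4/5}$ is dominated, and absorb all logarithmic factors into a single $\log^4 N$ (tightening the exponent from the naive count to $4$ via careful bookkeeping of the divisor sums — this is the one place where one must be slightly careful rather than generous with log powers). This gives the stated inequality with an absolute constant $C$.

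One remark on scope: since this theorem is quoted in the excerpt as a citation to Davenport's \emph{Multiplicative Number Theory}, in the paper itself we simply cite it; the sketch above indicates the standard proof for completeness. The only input beyond Vaughan's identity and partial summation is the elementary divisor-sum estimate for $\sum_{d \leq X}\min(N/d, \|\alpha d\|^{-1})$, which follows by splitting the range of $d$ into blocks of length $q$ and using that in each block the values $\|\alpha d\|$ are roughly equidistributed on a $1/q$-spaced grid.
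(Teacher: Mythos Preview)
Your reading is correct: the paper does not prove \cref{thm:vinogradov} at all --- it is stated as a black-box citation to Davenport, and you explicitly acknowledge this in your final remark. The sketch you give (Vaughan's identity with $U = V = N^{2/5}$, Type I via the $\sum_d \min(N/d, \|\alpha d\|^{-1})$ lemma, Type II via Cauchy--Schwarz and the same lemma) is indeed the standard argument found in Davenport and is a faithful outline of what a full proof would contain, so nothing further is needed here.
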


We will also use the following inequality due to \Erdos{} and \Turan, which gives a bound on how ``far away" a measure is from the Lebesgue measure in terms of its Fourier coefficients.

\begin{Theorem}[\Erdos-\Turan{} inequality \protect{\cite[Theorem 3]{erdosTuran1948-1, erdosTuran1948-2}}]\label{thm:erdos-turan}
    Let $\mu$ be a probability measure on the unit circle \(S^1\). Then, for any natural number \(K\),
    \[
        \sup_{A} \lvert \mu (A) - \Leb(A)\rvert 
        \leq 10 \Biggl( \frac{1}{K} + \sum_{k=1}^{K} \frac{\bigl\lvert \hat{\mu} (k) \bigr\rvert}{k} \Biggr),
    \]
    where the supremum is over all arcs \(A \subseteq S^1\) of the unit circle, \(\Leb\) is the Lebesgue measure, and \(\hat{\mu}(k)\) is the discrete Fourier transform of $\mu$ evaluated at \(k\).
\end{Theorem}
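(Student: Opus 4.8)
The plan is to reduce the inequality to a one-sided band-limited approximation of the indicator of an arc and then to do Fourier bookkeeping. Identify $S^{1}$ with $\R/\Z$ via $t \mapsto \e(t)$, so that $\Leb$ becomes ordinary Lebesgue measure on $[0,1)$; it is rotation invariant, and a rotation multiplies each $\hat{\mu}(k)$ by a factor of modulus $1$, hence does not change the right-hand side. So it suffices to bound $\lvert \mu(I) - \lvert I \rvert \rvert$ uniformly over subintervals $I = [\alpha,\beta] \subseteq [0,1]$ (the open/closed convention for the arc $A$ being irrelevant, as noted below). The key point is that $\1_{I}$ can be trapped between two trigonometric polynomials supported on the frequencies $\lvert k \rvert \leq K$, whose mean is $\lvert I \rvert$ up to $\Oh(1/K)$ and whose nonzero Fourier coefficients decay like $1/\lvert k \rvert$; integrating such polynomials against $\mu$ then expresses $\mu(I)$ in terms of $\hat{\mu}(0), \dots, \hat{\mu}(\pm K)$ only.

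The substantive input is the classical Beurling--Selberg extremal construction: for every $K \in \N$ there are trigonometric polynomials $S_{K}^{+}$ and $S_{K}^{-}$ of degree at most $K$ with $S_{K}^{-}(t) \leq \1_{I}(t) \leq S_{K}^{+}(t)$ for all $t$, such that
\[
    \widehat{S_{K}^{\pm}}(0) = \lvert I \rvert \pm \tfrac{1}{K+1}, \qquad \bigl\lvert \widehat{S_{K}^{\pm}}(k) \bigr\rvert \leq \tfrac{1}{K+1} + \tfrac{1}{\pi \lvert k \rvert} \quad \text{for } 1 \leq \lvert k \rvert \leq K.
\]
I would obtain these either by citing the construction or by reproducing it along the following lines: off its two endpoints one has $\1_{[\alpha,\beta]} = \lvert I \rvert + \psi(\,\cdot - \beta) - \psi(\,\cdot - \alpha)$, where $\psi(t) = \{t\} - 1/2$ is the sawtooth; replace $\psi$ by Vaaler's degree-$\leq K$ polynomial $\psi^{*}$, for which $\lvert \widehat{\psi^{*}}(k) \rvert \leq \tfrac{1}{2\pi \lvert k \rvert}$ and $\lvert \psi - \psi^{*} \rvert$ is dominated pointwise by $\tfrac{1}{2(K+1)}$ times the nonnegative Fejér kernel $F_{K}$; then add, respectively subtract, $\tfrac{1}{2(K+1)}\bigl(F_{K}(\,\cdot - \alpha) + F_{K}(\,\cdot - \beta)\bigr)$ to convert the two-sided error into a genuine one-sided majorant or minorant, which moves the mean only by $\pm\tfrac{1}{K+1}$ and leaves the nonzero coefficients bounded as above. (Equivalently, one periodizes and rescales Beurling's entire majorant of the signum function to exponential type $2\pi K$.) This is the only step where real work is needed; everything after it is counting.

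Granting $S_{K}^{\pm}$, the conclusion follows quickly. Since $\mu$ is a probability measure, $\hat{\mu}(0) = 1$, $\lvert \hat{\mu}(k) \rvert \leq 1$, and $\lvert \hat{\mu}(-k) \rvert = \lvert \hat{\mu}(k) \rvert$; writing $S_{K}^{\pm} = \sum_{\lvert k \rvert \leq K} \widehat{S_{K}^{\pm}}(k)\,\e(k\,\cdot)$ and integrating against $\mu$ gives
\begin{gather*}
    \mu(I) \leq \int S_{K}^{+} \, d\mu = \lvert I \rvert + \tfrac{1}{K+1} + \sum_{1 \leq \lvert k \rvert \leq K} \widehat{S_{K}^{+}}(k)\, \hat{\mu}(-k),\\
    \mu(I) \geq \int S_{K}^{-} \, d\mu = \lvert I \rvert - \tfrac{1}{K+1} + \sum_{1 \leq \lvert k \rvert \leq K} \widehat{S_{K}^{-}}(k)\, \hat{\mu}(-k),
\end{gather*}
and in both lines the sum is at most $\sum_{1 \leq \lvert k \rvert \leq K}\bigl(\tfrac{1}{K+1} + \tfrac{1}{\pi \lvert k \rvert}\bigr)\lvert \hat{\mu}(k) \rvert$ in absolute value. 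Using $\tfrac{1}{K+1} \leq \tfrac{1}{k}$ for $1 \leq k \leq K$, this is $\leq \bigl(2 + \tfrac{2}{\pi}\bigr)\sum_{k=1}^{K} \tfrac{\lvert \hat{\mu}(k) \rvert}{k}$, so
\[
    \bigl\lvert \mu(I) - \lvert I \rvert \bigr\rvert \leq \frac{1}{K+1} + \Bigl(2 + \frac{2}{\pi}\Bigr) \sum_{k=1}^{K} \frac{\lvert \hat{\mu}(k) \rvert}{k} \leq 10 \Biggl( \frac{1}{K} + \sum_{k=1}^{K} \frac{\lvert \hat{\mu}(k) \rvert}{k} \Biggr),
\]
and taking the supremum over arcs finishes the proof. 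The slack in the constant $10$ absorbs whatever precise constants the extremal construction carries; open versus closed arcs are handled uniformly by majorizing with the closed-interval polynomial and minorizing with the open-interval one, which differ only on a $\Leb$-null set; and degenerate (single-point) arcs are covered by the same argument, or trivially since then the displayed bound is at least $1/(K+1)$. The main obstacle is entirely the Beurling--Selberg step; the remainder is routine.
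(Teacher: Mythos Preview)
Your argument via the Beurling--Selberg/Vaaler extremal polynomials is correct and is the standard modern route to the \Erdos--\Turan{} inequality; the numerical bookkeeping checks out (indeed $2 + 2/\pi < 3$, so the constant $10$ is comfortable). Note, however, that the paper does not actually prove this theorem: it is quoted as a black box from the original \Erdos--\Turan{} papers \cite{erdosTuran1948-1, erdosTuran1948-2}, so there is no ``paper's proof'' to compare against. For what it is worth, the original 1948 argument predates the Beurling--Selberg machinery and proceeds by a more hands-on construction of smoothed majorants; your approach is cleaner and gives somewhat better constants, but either way the result is classical and the paper is content to cite it.
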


The following result of Poulias on the number of solutions to a Diophantine inequality will be used in our estimates.

\begin{Theorem}[Poulias \protect{\cite[special case of Theorem 1.1]{poulias2021}}]\label{thm:poulias}
    If $c > 2$ and 
    \[
        2k > \bigl(\floor{2c} + 1 \bigr) \bigl(\floor{2c} + 2 \bigr) + 1,
    \]
    then the number of solutions to 
    \[
        \lvert m_1^c + \cdots + m_k^c - m_{k + 1}^c - \cdots - m_{2k}^c \rvert \leq k
    \]
    in the interval $[1, M]^{2k}$ is $\lesssim M^{2k - c}$, where the implied constant depends on $c$ and $k$, but not on $M$. 
\end{Theorem}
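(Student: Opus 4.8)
The plan is to run the Davenport--Heilbronn circle method for Diophantine inequalities, with a Vinogradov-strength estimate for the exponential sum $f(\alpha) := \sum_{1 \leq m \leq M}\e(\alpha m^c)$ as the essential input. Let $N(M)$ denote the number of solutions in question and put $L(\mathbf m) := m_1^c + \cdots + m_k^c - m_{k+1}^c - \cdots - m_{2k}^c$. First I would fix an even, non-negative $K \in L^1(\R)$ with $K(\alpha) \lesssim \min(1,\alpha^{-2})$ and $\int_{\R}K(\alpha)\e(\alpha t)\,d\alpha \geq \1_{[-k,k]}(t)$ for every $t$; a suitably scaled Fej\'er kernel does the job, with implied constants allowed to depend on $c$ and $k$ as in the statement. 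Since $K \geq 0$ one has $\1_{[-k,k]}(L(\mathbf m)) \leq \int_{\R}K(\alpha)\e(\alpha L(\mathbf m))\,d\alpha$ for every $\mathbf m$, and hence, summing over $\mathbf m \in [1,M]^{2k}$ and applying Tonelli's theorem,
\[
    N(M) \;\leq\; \int_{\R} K(\alpha)\,\lvert f(\alpha)\rvert^{2k}\,d\alpha ;
\]
the whole problem is to bound this integral by $\Oh(M^{2k-c})$.

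I would split $\R$ by the size of $\lvert\alpha\rvert M^c$ into a \emph{central} region $\mathfrak C$, on which $\lvert\alpha\rvert M^c$ is below a small constant; a \emph{minor} region $\mathfrak m$, on which $\lvert\alpha\rvert M^c$ is larger but still $\lvert\alpha\rvert \leq M^c$; and a \emph{tail} $\mathfrak t := \{\lvert\alpha\rvert > M^c\}$. On $\mathfrak C$ every phase $\alpha m^c$ is $\Oh(1)$, so $\lvert f(\alpha)\rvert \approx M$, and since $K \lesssim 1$ this region contributes $\lesssim M^{-c}\cdot M^{2k} = M^{2k-c}$ --- which is both the size claimed and, in fact, the true order of $N(M)$ (the ``diagonal'' contribution). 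On $\mathfrak t$ the decay $K(\alpha) \lesssim \alpha^{-2}$ together with the trivial bound $\lvert f(\alpha)\rvert \leq M$ gives a contribution $\lesssim M^{2k}\int_{\lvert\alpha\rvert > M^c}\alpha^{-2}\,d\alpha \lesssim M^{2k-c}$. Both of these are routine; everything rests on the minor region.

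The task in the minor region is to show $\int_{\mathfrak m}\lvert f(\alpha)\rvert^{2k}\,d\alpha \lesssim M^{2k-c}$. I would decompose $\mathfrak m$ into dyadic ranges $\lvert\alpha\rvert \approx X$ and $[1,M]$ into dyadic blocks $m \approx M_0$, and partition each block into short intervals on which Taylor's theorem replaces $m^c$ by a polynomial of degree $d := \floor{2c}$ with an error that is $\oh(1)$ once multiplied by $\alpha$ --- the degree $\floor{2c}$ appears precisely because it is the least $j$ for which $\lvert\alpha\rvert\,(m^c)^{(j)} \lesssim 1$ holds throughout $\mathfrak m$, the binding case being $\lvert\alpha\rvert \approx M^c$ with $m \approx M$. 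Grouping subintervals of comparable length, the $2k$-fold count on each piece reduces to a mean value of exponential sums of polynomials of degree $\floor{2c}$, which Vinogradov's mean value theorem for that degree --- in its now-established sharp form --- controls with a power saving over the trivial bound. The number of variables on each side, $k$, comfortably exceeds the Vinogradov critical index $\binom{\floor{2c}+1}{2}$ --- indeed $2k > (\floor{2c}+1)(\floor{2c}+2)+1$ --- and this surplus is exactly what turns the saving into a quantitative gain that, once summed over all dyadic $X$ and $M_0$, keeps the minor-region integral strictly below $M^{2k-c}$; the strict inequality simultaneously absorbs the usual $\epsilon$-loss, giving the clean bound. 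So the hypothesis on $k$ is precisely what the Vinogradov input demands.

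The main obstacle is this minor-region analysis. One must establish Vinogradov-strength control of $\sum_{m \leq M}\e(\alpha m^c)$ for a genuinely non-integer exponent, \emph{uniformly} over the whole range in which $\lvert\alpha\rvert$ runs from a constant times $M^{-c}$ up to $M^c$, and then carry out the two-parameter dyadic accounting --- over $m$ and over $\alpha$, with the length of the Taylor intervals tuned to each scale --- carefully enough that the accumulated savings outweigh $M^{2k-c}$. In particular the small-$m$ ranges, where $m^c$ has large high-order relative derivatives and admits no polynomial approximation on unit-length intervals, must be isolated and bounded separately; checking that their contribution is still $\lesssim M^{2k-c}$ is where the hypothesis $c > 2$ enters --- through the value of $\floor{2c}$ and the auxiliary Diophantine estimates underpinning the Taylor step --- and is the source of that restriction.
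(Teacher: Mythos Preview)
The paper does not prove this theorem at all: it is quoted verbatim as a special case of \cite[Theorem~1.1]{poulias2021} and used as a black box (in the proof of \cref{lem:poulias-bound}). So there is no ``paper's own proof'' to compare your attempt against.

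That said, your sketch is a faithful outline of the Davenport--Heilbronn argument that Poulias actually runs. The kernel set-up, the three-way decomposition into central/minor/tail, the treatment of the central and tail pieces, and the identification of the minor region as the crux are all correct. Your explanation of why the degree $\floor{2c}$ arises --- via Taylor approximation of $m^c$ on short intervals, with the interval length tuned so that the remainder term multiplied by $\alpha$ is $\oh(1)$ over the full minor range $\lvert\alpha\rvert \lesssim M^c$ --- is the right heuristic, and the link between the hypothesis $2k > (\floor{2c}+1)(\floor{2c}+2)+1$ and the critical exponent in Vinogradov's mean value theorem (now sharp, by Bourgain--Demeter--Guth and Wooley) is the correct one.

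Where your sketch is thin is exactly where you flag it: the minor-region bookkeeping. In Poulias's paper the substantial work is not just invoking Vinogradov's mean value theorem but establishing a uniform pointwise bound for $f(\alpha)$ on the minor arcs (his Theorem~1.2), which requires a careful Weyl--van der Corput differencing adapted to fractional powers, together with a large-sieve-type mean value. Your paragraph gestures at ``two-parameter dyadic accounting'' but does not indicate how the pointwise sup-norm saving and the $L^{2k}$ mean-value saving are combined, nor how the short-interval lengths are chosen as functions of $(X, M_0)$. These are genuine technical hurdles, not routine, and constitute the bulk of Poulias's paper; a full proof would need to reproduce them. Your final remark that $c > 2$ enters through the auxiliary Diophantine estimates is correct in spirit but vague --- in Poulias the restriction comes from a specific lemma on spacing of fractional powers that fails for smaller $c$.
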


Finally, we quote the following result, which is a classical application of van der Corput's method of bounding exponential sums.

\begin{Theorem}[Graham-Kolesnik \protect{\cite[Theorem 2.9]{grahamKolesnik1991}}]\label{thm:vdc-estimate}
    Let $q$ be a non-negative integer and $f$ be a function with $q + 2$ continuous derivatives on the interval $I \subseteq [X, 2X]$. Assume that there are constants $B_1, B_2$, and $F$ such that 
    \begin{equation}\label{eqn:derv-bound}
        B_1 F X^{-r} \leq \lvert f^{(r)}(x)\rvert \leq B_2 F X^{-r}
    \end{equation} 
    for all $1 \leq r \leq q + 2$. Then there is a constant $B = B(B_1,B_2)$ such that 
    \[
        \biggl\lvert \sum_{n \in I} \e\bigl(f(n) \bigr) \biggr\rvert 
        \leq B \biggl(F^\frac{1}{2^{q+2}-2}X^{1-\frac{q+2}{2^{q+2}-2}} + \frac{X}{F} \biggr).
    \]
\end{Theorem}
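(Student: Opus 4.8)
The plan is to prove \cref{thm:vdc-estimate} by induction on $q \ge 0$, realizing the bound as the result of iterating van der Corput's $A$-process (Weyl differencing) on top of a base estimate supplied by the second-derivative test. In exponent-pair language, let $(\kappa_0,\lambda_0)=(\tfrac12,\tfrac12)$ and $(\kappa_{j+1},\lambda_{j+1})=\bigl(\tfrac{\kappa_j}{2\kappa_j+2},\,\tfrac{\kappa_j+\lambda_j+1}{2\kappa_j+2}\bigr)$; one checks that $\kappa_q=\tfrac{1}{2^{q+2}-2}$ and $\lambda_q-\kappa_q=1-\tfrac{q+2}{2^{q+2}-2}$, so the first term of the claimed bound is exactly $F^{\kappa_q}X^{\lambda_q-\kappa_q}$, the exponent-pair bound one expects from applying the $A$-process $q$ times to the second-derivative test. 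The structural feature that makes the induction run is that the two-sided derivative bounds \eqref{eqn:derv-bound} are inherited, with an updated value of $F$, under differencing.

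In the base case $q=0$ one must show $\bigl\lvert\sum_{n\in I}\e(f(n))\bigr\rvert\lesssim F^{1/2}+X/F$, and I would split on the size of $F$. If $F\lesssim X$, then $\lvert f'(x)\rvert\le B_2 F/X$ is bounded by a small constant on $I\subseteq[X,2X]$, so $f'$ avoids every nonzero integer while $\lVert f'(x)\rVert=\lvert f'(x)\rvert\ge B_1 F/X$; since $f''$ has constant sign by \eqref{eqn:derv-bound}, $f'$ is monotone and the Kusmin--Landau first-derivative test gives $\lesssim X/F$. If $F\gtrsim X$, then $\lvert f''\rvert\approx F/X^2$ with constant sign, so truncated Poisson summation writes the sum as $\sum_\nu\int_I\e(f(x)-\nu x)\,dx$ over the $\lesssim 1+F/X$ integers $\nu$ meeting the range of $f'$, up to an $\Oh\bigl(\log(2+F/X)\bigr)$ truncation error; van der Corput's lemma for exponential integrals bounds each integral by $\lesssim (F/X^2)^{-1/2}=XF^{-1/2}$, so multiplying out gives $\lesssim F^{1/2}+XF^{-1/2}+\log(2+F/X)\lesssim F^{1/2}$, using $F\gtrsim X$. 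In both regimes the bound holds.

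For the inductive step, assume the statement for $q-1$ and let $f$ have $q+2$ controlled derivatives on $I$. The Weyl--van der Corput inequality gives, for $1\le H\le X$,
\[
    \Bigl\lvert\sum_{n\in I}\e(f(n))\Bigr\rvert^2 \lesssim \frac{X^2}{H}+\frac{X}{H}\sum_{h=1}^{H-1}\Bigl\lvert\sum_{n\in I_h}\e\bigl(g_h(n)\bigr)\Bigr\rvert,\qquad g_h(x):=f(x+h)-f(x),
\]
with $I_h=I\cap(I-h)$. By the mean value theorem $g_h^{(r)}(x)=h f^{(r+1)}(\xi)$ for some $\xi\in[X,2X]$, so $g_h$ has $q+1=(q-1)+2$ continuous derivatives with $\lvert g_h^{(r)}(x)\rvert\approx(hF/X)X^{-r}$ for $1\le r\le q+1$; thus $g_h$ satisfies the hypotheses of the theorem for $q-1$ with parameter $F'\approx hF/X$ (the lower bound in \eqref{eqn:derv-bound} for $f$ is exactly what yields the lower bound for $g_h$). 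Inserting the inductive bound into each inner sum, summing over $h<H$ via $\sum_{h<H}h^{\kappa_{q-1}}\lesssim H^{1+\kappa_{q-1}}$, and choosing $H$ to balance $X^2/H$ against the resulting main term reproduces $\bigl(F^{\kappa_q}X^{\lambda_q-\kappa_q}\bigr)^2$ after a short computation; the $X/F'=X^2/(hF)$ error terms sum to $\lesssim(X^2/F)\log X$, which after the $X/H$ factor contributes $\lesssim X^3(\log X)/(HF)$.

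The main obstacle is the handling of this last error term: one must check that, after summation over $h$ and substitution of the optimal $H$, the errors inherited from the $q-1$ case collapse back to $X/F$ (together with the main term) rather than to something weaker. This again forces a split according to the size of $F$ relative to $X$ — applying the Kusmin--Landau bound directly to $f$ when $F$ is small, and invoking the induction only when $F$ is large enough that the accumulated error is absorbed — together with separate treatment of the cases where the balancing value of $H$ lies outside $[1,X]$, in which one takes $H$ at an endpoint and falls back on the trivial bound $X$ or on $X/F$. None of this is deep, but it is exactly the regime-by-regime bookkeeping that the Graham--Kolesnik reference distributes over the chain of lemmas preceding Theorem 2.9.
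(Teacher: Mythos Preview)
The paper does not prove \cref{thm:vdc-estimate} at all: it is quoted verbatim from Graham--Kolesnik as a black-box input (note the sentence ``Finally, we quote the following result\ldots'' immediately preceding the statement), so there is no proof in the paper to compare your proposal against.

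That said, your sketch is a faithful outline of the argument in the cited source. The induction on $q$ via the Weyl--van der Corput $A$-process, with the second-derivative test (plus Kusmin--Landau in the small-$F$ regime) as base case, is exactly how Graham--Kolesnik build up to their Theorem~2.9, and your exponent-pair recursion $(\kappa_{q},\lambda_{q})$ correctly reproduces the exponents $\tfrac{1}{2^{q+2}-2}$ and $1-\tfrac{q+2}{2^{q+2}-2}$. Your own closing remark already identifies the one place where the sketch is thin --- the regime-by-regime bookkeeping needed to absorb the accumulated $X/F'$ errors and to handle the boundary cases for the optimal $H$ --- and you are right that this is precisely what the chain of lemmas in Graham--Kolesnik Chapter~2 is devoted to. There is no gap in the strategy; what remains is routine case analysis that the reference carries out in full.
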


%EXPLANATION OF WHAT WE DID
\subsection{Outline of Argument}\label{outline}

\hspace{20pt} Our argument is Fourier-analytic in nature. Initially we recast the averages \eqref{eqn:avg} in terms of Fourier transforms. Consequently, the main term (which equals $1$) comes from the trivial character, while the rest of the characters are divided into two disjoint parts. In the first part (treating the case where the character is ``close" to one), we use the combination of an elementary argument using the Prime Number Theorem and Vinogradov's bounds for $\hat{\Lambda}$, depending on whether the character is ``too close" to one or not, and Poulias's bound for the number of solutions to a Diophantine inequality for the other term involving an exponential sum. In the second part, we estimate $\hat{\Lambda}$ trivially and use the \Erdos-\Turan{} inequality to convert the exponential sum involving $\floor{m^c}$ to one involving $m^c$, then use van der Corput estimates for this new sum, which can be applied because of the smoothness of the function $m^c$. Putting everything together, this proves the result. Below, we provide a dependency graph to illustrate how different results proved in this paper are interrelated.

%DEPENDENCY GRAPH
\begin{figure}[H]\label{fig:dependency-graph}
    \centering
    \begin{tikzpicture}[
            every node/.style = {
                draw, rectangle, 
                minimum height = 1.5em, 
                outer sep = 2.5pt, 
                inner ysep = 0pt
            }
        ]
        % Defining nodes (organized by layer of graph, beginning from the bottom)
            \node (a) at (8.8em, 0em) {\small{\cref{cor:infinitely-many-pairs}}};
            \node (b) at (8.8em, 3em) {\small{\cref{thm:main-thm}}};
            
            \node (c1) at (5.1em, 6em) {\small{\cref{thm:c-greater-than-2}}};
            \node (c2) at (12.5em, 6em) {\small{\cref{prop:fourier-transform}}};
            
            \node (d1) at (-0.4em, 9em) {\small{\cref{prop:estimate-sigma1}}};
            \node (d2) at (11.1em, 9em) {\small{\cref{prop:estimate-sigma2}}};
            
            \node (e1) at (-13em, 12em) {\small{\cref{lem:parseval-bound}}};
            \node (e2) at (-6.7em, 12em) {\small{\cref{lem:rational-approx-bound}}};
            \node (e3) at (-0.4em, 12em) {\small{\cref{lem:small-xi-bound}}};
            \node (e4) at (5.9em, 12em) {\small{\cref{lem:poulias-bound}}};
    
            \node (f1) at (-6.7em, 15em) {\small{\cref{thm:vinogradov}}};
            \node (f2) at (3.5em, 15em) {\small{\cref{thm:poulias}}};
            \node (f3) at (11.1em, 15em) {\small{\cref{prop:estimate-sigma2-details}}};
    
            \node (g1) at (3.9em, 18em) {\small{\cref{lem:turan-bound}}};
            \node (g2) at (11.1em, 18em) {\small{\cref{cor:bound-for-turan-bound}}};
            \node (g3) at (18.3em, 18em) {\small{\cref{thm:vdc-estimate}}};
    
            \node (h1) at (-6.2em, 21em) {\small{\cref{lem:partition-bound}}};
            \node (h2) at (0.5em, 21em) {\small{\cref{lem:ft-bounds-for-measures}}};
            \node (h3) at (7.2em, 21em) {\small{\cref{lem:expectation-to-measure}}};
            \node (h4) at (14.1em, 21em) {\small{\cref{thm:erdos-turan}}};
    
            \node (i) at (3.9em, 24em) {\small{\cref{lem:measure-stuff}}};

        % Connections between nodes
            \graph[use existing nodes]{
                i -> {h2, h3};
                {h1, h2, h3, h4} -> g1;
                {g1, g2, g3} -> f3 ->["\small{\cref{lem:sigma2-proof-works}}"{draw = none}] d2;
                f1 -> e2;
                f2 -> e4;
                {e1, e2, e3, e4} -> d1;
                {d1, d2} -> c1;
                {c1, c2} -> b -> a;
            };
    \end{tikzpicture}
    \caption{Dependency graph. An arrow from vertex \(a\) to vertex \(b\) denotes that the proof of \(b\) depends on \(a\).}
\end{figure}
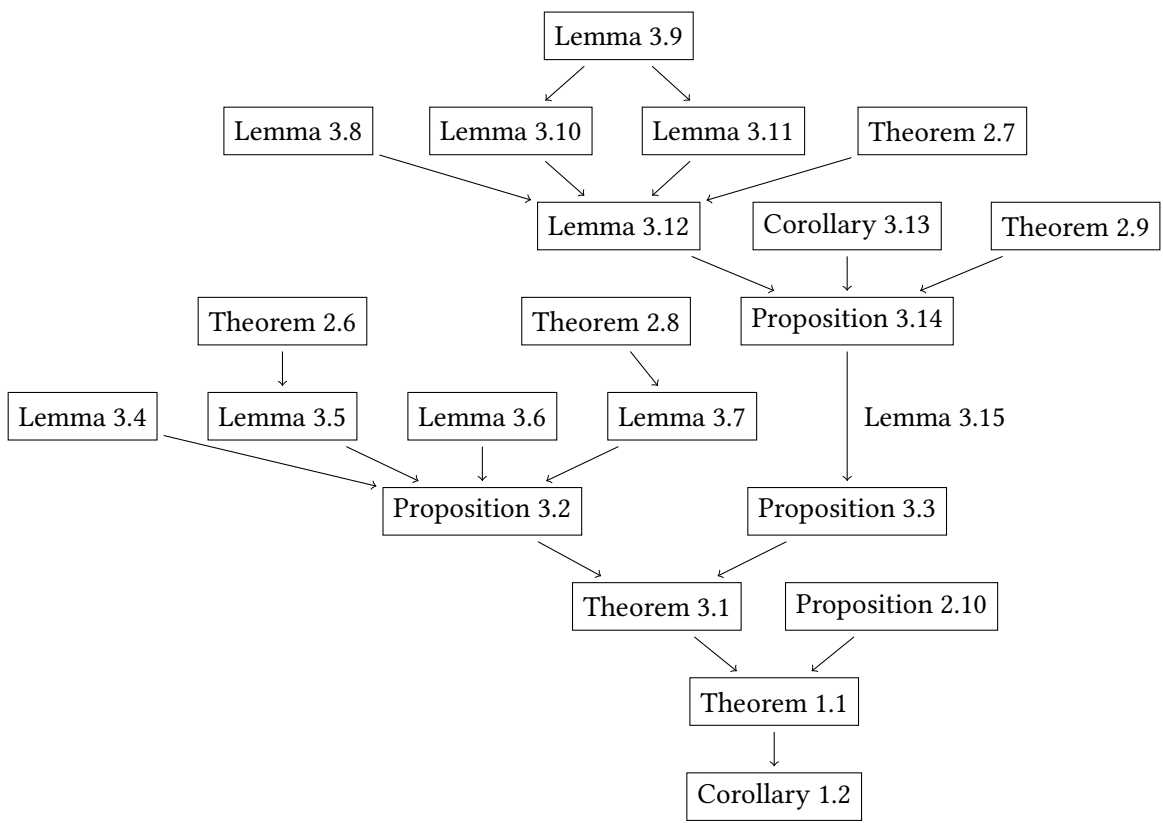

%DISCRETE FOURIER TRANSFORM
\subsection{Initial Manipulations}
We aim to understand the asymptotic behavior of \(\mathop{\E}_{m \leq M} \mathop{\E}_{n \leq N} \Lambda(n) \Lambda\bigl(n + \floor{m^c} \bigr)\) through the use of the discrete Fourier transform. 

\begin{Proposition}\label{prop:fourier-transform} 
Let $N\in \N$, $c > 2$ be a non-integer, $B \geq \frac{3}{c}$, and $M = \frac{N^{1/c}}{\log^{B} N}$. Then
    \[
        \mathop{\E}_{m \leq M} \mathop{\E}_{n \leq N} \Lambda(n) \Lambda\bigl(n + \floor{m^c} \bigr) 
        = \frac{1}{N^2} \sum_{\xi = 0}^{N - 1} \bigl\lvert \hat{\Lambda}(\xi) \bigr\rvert^2 \mathop{\E}_{m \leq M} \e\biggl(\! -\frac{\xi}{N} \floor{m^c} \!\biggr) + \Oh \biggl(\frac{1}{\log^{Bc-2}{N}}\biggr).
    \]
\end{Proposition}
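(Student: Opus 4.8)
The plan is to start from the definition of the Cesàro averages and expand the product $\Lambda(n)\Lambda(n+\floor{m^c})$ via Fourier inversion on $\Z/N\Z$, then isolate the diagonal contribution. Concretely, writing $L = \floor{m^c}$ and noting that for $m \leq M = N^{1/c}\log^{-B}N$ we have $L \leq M^c \leq N\log^{-Bc}N$, the shift $n \mapsto n + L$ on the interval $\{1,\dots,N\}$ agrees with the cyclic shift on $\Z/N\Z$ except for the $O(L)$ values of $n$ near the top of the range where wraparound occurs. So the first step is to replace $\Lambda$ restricted to $\{1,\dots,N\}$ by the genuine function on $\Z/N\Z$, incurring an error from those $O(L)$ boundary terms; since $\Lambda(n) \lesssim \log N$ pointwise, each such pair contributes $\lesssim \log^2 N$, and there are $\lesssim L \lesssim N\log^{-Bc}N$ of them per value of $m$, so after dividing by $N$ (from $\E_{n\leq N}$) this error is $\lesssim \log^{2-Bc}N$, uniformly in $m$, hence survives the average over $m$. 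This is exactly the claimed error term.

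Having passed to $\Z/N\Z$, I would apply \cref{thm:fourier} to write $\Lambda(n+L) = \frac{1}{N}\sum_{\xi} \hat\Lambda(\xi)\,\e\!\bigl(\tfrac{\xi(n+L)}{N}\bigr)$, substitute, and carry out the sum over $n \in \Z/N\Z$ of $\Lambda(n)\e(\tfrac{\xi n}{N})$, which is precisely $\overline{\hat\Lambda(-\xi)}$ — or, more cleanly, pair it up so that the $n$-sum produces $\hat\Lambda(-\xi)$ and combine with $\hat\Lambda(\xi)$; using $\Lambda$ real-valued one gets $\lvert\hat\Lambda(\xi)\rvert^2$ after matching the frequency on both factors. (Equivalently, one can expand both $\Lambda(n)$ and $\Lambda(n+L)$ by inversion and collapse the resulting double frequency sum using orthogonality of characters, which forces the two frequencies to be negatives of each other and yields $\lvert\hat\Lambda(\xi)\rvert^2$.) Pulling the average over $m$ inside, the $\e(\tfrac{\xi L}{N}) = \e(-\tfrac{\xi}{N}\floor{m^c})$ factor — note the sign is a matter of convention in the statement and can be absorbed since $\lvert\hat\Lambda(\xi)\rvert^2$ is even-ish; I would track signs carefully to match the displayed formula — remains, giving the main sum $\frac{1}{N^2}\sum_{\xi=0}^{N-1}\lvert\hat\Lambda(\xi)\rvert^2 \E_{m\leq M}\e(-\tfrac{\xi}{N}\floor{m^c})$.

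One bookkeeping point deserves care: the Cesàro normalization uses $\floor{M}$ and $\floor{N}$, and the sums run over $\{1,\dots,\floor{N}\}$ rather than a full residue system mod $N$; since $N \in \N$ here this is harmless, but I would state at the outset that we take $N$ to be the modulus so that $\{1,\dots,N\}$ is a complete set of residues. The hypothesis $B \geq 3/c$ is not actually needed for this proposition as stated beyond ensuring $M \geq 1$ (so the average is nonempty) and $L < N$ (so wraparound affects only a genuine ``top slice''); in fact $Bc \geq 3 > 2$ guarantees the error exponent $Bc - 2 > 0$ is positive, which is the only place the constraint is used, and I would remark on this.

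The main obstacle is the wraparound/boundary analysis: one must argue cleanly that restricting $\Lambda$ to $\{1,\dots,N\}$ and then extending periodically introduces an error controlled by the number of integer-shift pairs $(n, n+L)$ with $n \leq N < n+L$, summed against $\Lambda(n)\Lambda(n+L-N)$ — and that this is $\lesssim L\log^2 N$ per $m$, hence $\lesssim N\log^{2-Bc}N$ after summing the geometric-ish quantity $\sum_{m\leq M} \floor{m^c} \lesssim M^{c+1} = N^{(c+1)/c}\log^{-B(c+1)}N$ — wait, one should divide by $N^2$ (from $\E_n$ and the normalization), not $N$; so the correct accounting is that the total boundary error is $\lesssim \frac{1}{N\floor{M}}\sum_{m\leq M}\floor{m^c}\log^2 N \lesssim \frac{M^c\log^2 N}{N} \lesssim \log^{2-Bc}N$, using $\sum_{m\leq M}\floor{m^c} \lesssim M\cdot M^c$ and dividing by $\floor{M}$. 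Getting these normalizations exactly right, and confirming no hidden wraparound contributions are larger, is the delicate part; everything else is a mechanical application of Fourier inversion and the trivial bound $\Lambda \lesssim \log N$.
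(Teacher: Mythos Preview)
Your proposal is correct and follows essentially the same approach as the paper: pass from $\{1,\dots,N\}$ to $\Z/N\Z$ at the cost of a wraparound error bounded by $(M^c/N)\log^2 N = \log^{2-Bc}N$, then expand both $\Lambda$ factors by Fourier inversion and collapse the double frequency sum via orthogonality of characters, using $\hat\Lambda(\xi)=\overline{\hat\Lambda(-\xi)}$ to obtain $\lvert\hat\Lambda(\xi)\rvert^2$. The paper's proof is exactly this, and your error accounting (either the uniform-in-$m$ bound or the summed version) matches the paper's $\Oh\bigl((M^c/N)\log^2 N\bigr)$.
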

\begin{proof} First, unpacking notation yields
    \begin{equation}\label{eqn:just-by-definition}
        \mathop{\E}_{m \leq M} \mathop{\E}_{n \leq N} \Lambda(n) \Lambda\bigl(n + \floor{m^c} \bigr)
        = \frac{1}{NM} \sum_{m = 0}^{M - 1} \sum_{n = 0}^{N - 1} \Lambda(n) \Lambda\bigl(n + \floor{m^c} \bigr).
    \end{equation}
        We next apply the Fourier inversion formula (\cref{thm:fourier}) to each of the indicator functions in the above right-hand side. To allow us to do so, we shift the domains of \(\Lambda(n)\) and \(\Lambda\bigl(n + \floor{m^c} \bigr)\) to \(\Z/N\Z\) by taking the arguments modulo \(N\):
    \begin{equation*}
        \eqref{eqn:just-by-definition} = \frac{1}{NM} \sum_{m = 0}^{M - 1} \sum_{n = 0}^{N - 1} \Lambda\bigl(n \text{ (mod } N) \bigr) \Lambda\bigl(n + \floor{m^c} \text{ (mod } N)\bigr) + \Oh \biggl(\frac{M^c}{N} \log^2{N} \biggr).
    \end{equation*}
    This transformation introduces an error of \(\Oh \bigl((M^c/N)\log^2{N} \bigr)\) because the difference between \(\Lambda\bigl(n + \floor{m^c} \bigr)\) and \(\Lambda\bigl(n + \floor{m^c} \text{ (mod } N) \bigr)\) is $\lesssim \log N$, and $\Lambda\bigl(n\text{ (mod } N) \bigr) = \Lambda(n) \lesssim \log N$.
    Now,
    \begin{align*}
        \begin{split}
            \eqref{eqn:just-by-definition} &= \frac{1}{NM} \sum_{m = 0}^{M - 1} \sum_{n = 0}^{N - 1} \Biggl(\frac{1}{N} \sum_{\eta = 0}^{N - 1} \hat{\Lambda}(\eta) \e\biggl(\! -\frac{\eta}{N} n \biggr) \Biggr)\\
            &\hphantom{= \frac{1}{NM} \sum_{m = 0}^{M - 1} \sum_{n = 0}^{N - 1}}\ 
            \cdot \Biggl(\frac{1}{N} \sum_{\xi = 0}^{N - 1} \hat{\Lambda}(\xi) \e\biggl(\! -\frac{\xi}{N} \bigl(n + \floor{m^c} \bigr) \biggr) \Biggr) + \Oh \biggl(\frac{M^c}{N}\log^2{N} \biggr)
        \end{split}\\
        \begin{split}
            &= \frac{1}{N^2M} \sum_{m = 0}^{M - 1} \, \sum_{\eta,\, \xi = 0}^{N - 1} \hat{\Lambda}(\xi)\hat{\Lambda}(\eta) \e\biggl(\! -\frac{\xi}{N} \floor{m^c} \!\biggr)\\
            &\hphantom{= \frac{1}{NM} \sum_{m = 0}^{M - 1} \, \sum_{n = 0}^{N - 1}}\ 
            \cdot \Biggl(\frac{1}{N} \sum_{n = 0}^{N - 1} \e\biggl(\! -\frac{\xi + \eta}{N} n \biggr)\Biggr) + \Oh \biggl(\frac{M^c}{N}\log^2{N} \biggr).
        \end{split}
    \end{align*}
    Note that \(\sum_{n = 0}^{N - 1} \e\bigl(-n(\xi + \eta)/N \bigr) = 0\) unless \(\eta +\xi = N\). So, all of the cross terms cancel, and we are left with the simpler sum
    \begin{align*}
        \mathop{\E}_{m \leq M} \mathop{\E}_{n \leq N} \Lambda(n) \Lambda\bigl(n + \floor{m^c} \!\bigr)
        &= \frac{1}{N^2M} \sum_{m = 0}^{M - 1} \sum_{\xi = 0}^{N - 1} \bigl\lvert \hat{\Lambda}(\xi) \bigr\rvert^2 \e\biggl(\! -\frac{\xi}{N} \floor{m^c} \!\biggr) + \Oh \biggl(\frac{M^c}{N}\log^2{N} \biggr)\\
        &=\frac{1}{N^2} \sum_{\xi = 0}^{N - 1} \bigl\lvert \hat{\Lambda}(\xi) \bigr\rvert^2 \mathop{\E}_{m \leq M} \e\biggl(\! -\frac{\xi}{N} \floor{m^c} \!\biggr) + \Oh \biggl(\frac{1}{\log^{Bc-2}{N}}\biggr),
    \end{align*} where in the first step we used that $\hat{\Lambda}(\xi) = \overline{\hat{\Lambda}(-\xi)}$.
\end{proof}

%MAIN PROOF
\section{Proof of Main Result}

In this section we prove \cref{thm:main-thm}, following the outline in \cref{outline}. In view of \cref{prop:fourier-transform}, the following directly implies \cref{thm:main-thm}.

\begin{Theorem}\label{thm:c-greater-than-2}
    Let $c > 2$ be a non-integer.
    For any $A \geq 1$ there exists $B \geq 1$ such that if $M = \frac{N^{1/c}}{\log^{B} N}$, then
    \[
         \sum_{\xi = 0}^{N - 1} \biggl\lvert \frac{\hat{\Lambda}(\xi)}{N} \biggr\rvert^2 \Biggl(\frac{1}{M} \sum_{m = 1}^M \e\biggl(\! -\frac{\xi \floor{m^c}}{N} \biggr)\Biggr) 
         = 1 + \Oh\biggl(\frac{1}{\log^A N} \biggr).
    \]
\end{Theorem}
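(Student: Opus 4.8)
The main term must come from $\xi = 0$: there $\hat\Lambda(0) = \sum_{n=1}^N \Lambda(n) = N + O\!\bigl(N e^{-t\sqrt{\log N}}\bigr)$ by \cref{thm:prime-number-equiv}, and the inner average over $m$ is trivially $1$, so the $\xi = 0$ term is $1 + O\!\bigl(e^{-t\sqrt{\log N}}\bigr)$, which is absorbed into the error. Everything then reduces to showing that
\[
    \Sigma := \sum_{\xi = 1}^{N-1} \biggl\lvert \frac{\hat\Lambda(\xi)}{N} \biggr\rvert^2 \biggl(\frac{1}{M} \sum_{m=1}^M \e\Bigl(\! -\frac{\xi \floor{m^c}}{N}\Bigr)\biggr) = O(\log^{-A} N).
\]
Following the outline, I would split the range of $\xi$ into a ``major-arc'' part $\Sigma_1$, consisting of $\xi$ for which the exponential factor $\xi/N$ has a good rational approximation with small denominator (equivalently, $\xi$ close to a rational $a/q$ with $q$ small), and a ``minor-arc'' part $\Sigma_2$ consisting of the rest; by Dirichlet's approximation theorem every $\xi$ falls into one of the two regimes once the threshold is chosen. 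This matches \cref{prop:estimate-sigma1} and \cref{prop:estimate-sigma2} in the dependency graph, and the theorem follows by adding the two bounds.

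**Bounding $\Sigma_1$ (major arcs).** Here I would use the available information on $\hat\Lambda$: when $\xi$ is ``too close'' to $a/q$, meaning $q$ is very small (say $q \le \log^D N$ for a suitable $D$), I would use \cref{thm:prime-number-equiv} together with the classical expansion of $\hat\Lambda$ near rationals with small denominator to control $|\hat\Lambda(\xi)|$ (this is the rôle of \cref{lem:small-xi-bound}); when $q$ is of intermediate size, Vinogradov's bound (\cref{thm:vinogradov}) gives $|\hat\Lambda(\xi)| \lesssim (N q^{-1/2} + N^{4/5} + \sqrt{qN})\log^4 N$ (\cref{lem:rational-approx-bound}). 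For the exponential sum $\frac1M\sum_m \e(-\xi\floor{m^c}/N)$, rather than estimating it pointwise I would use the second-moment bound: expanding $|\sum_m \e(\dots)|^{2k}$ and applying Poulias's Diophantine inequality count (\cref{thm:poulias}, via \cref{lem:poulias-bound}) shows that on average over the relevant $\xi$ this sum is small, saving a power of $M$ and hence of $N^{1/c}$. Combined with Parseval $\sum_\xi |\hat\Lambda(\xi)|^2 = N\sum_n \Lambda(n)^2 \lesssim N^2 \log N$ (\cref{lem:parseval-bound}) to handle the tail, and a careful choice of the cutoffs $D$ and of $B$ in terms of $A$, this should give $\Sigma_1 = O(\log^{-A}N)$.

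**Bounding $\Sigma_2$ (minor arcs).** On the minor arcs I would estimate $\hat\Lambda$ trivially via Parseval, $\sum_{\xi}|\hat\Lambda(\xi)/N|^2 \lesssim \log N$, so it suffices to show that $\frac1M\sum_{m=1}^M \e(-\xi\floor{m^c}/N)$ is $O(\log^{-A-1}N)$ uniformly over minor-arc $\xi$. The floor is the obstruction to smoothness, so I would first remove it using the \Erdos--\Turan{} inequality (\cref{thm:erdos-turan}): writing $\e(-\xi\floor{m^c}/N)$ in terms of $\{\,m^c\,\}$ and the fractional-part distribution, one converts the sum over $\floor{m^c}$ into sums of the form $\sum_m \e(\theta m^c)$ for various $\theta$, at the cost of a $1/K + \sum_{k\le K}k^{-1}|\cdot|$ term (this is \cref{lem:turan-bound} and its corollary). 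The resulting pure exponential sums $\sum_{m \sim M} \e(\theta m^c)$ are amenable to van der Corput's method: since $c > 2$ is fixed and non-integer, the derivatives of $\theta m^c$ satisfy the regularity hypothesis \eqref{eqn:derv-bound} with $F \approx |\theta| M^c$, and \cref{thm:vdc-estimate} (Graham--Kolesnik) gives cancellation $\sum_{m\sim M}\e(\theta m^c) \lesssim M^{1-\delta}$ for some $\delta = \delta(c) > 0$, uniformly in the relevant range of $\theta$. Dividing by $M$ and choosing $K$ (and $B$) appropriately yields the required saving.

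**Main obstacle.** The delicate point is the bookkeeping in $\Sigma_1$: one must choose the denominator cutoffs for the three regimes (PNT-type bound, Vinogradov bound, Poulias second-moment bound) so that all three contributions are simultaneously $O(\log^{-A}N)$, while keeping $M = N^{1/c}\log^{-B}N$ with $B$ depending only on $A$ and $c$. In particular, Poulias's theorem requires $2k > (\floor{2c}+1)(\floor{2c}+2)+1$, so the power saving from the $2k$-th moment is a fixed (small) power of $M$ depending on $c$; one has to verify that this fixed saving, after being diluted by the Parseval factor $\log N$ and by summing over $O(N^{o(1)})$ rationals $a/q$, still beats any prescribed power of $\log N$ — which it does, since $M$ is a genuine power of $N$. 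Getting the uniformity in $\theta$ for the van der Corput estimate on the minor arcs (so that the $c>2$, non-integer hypothesis is actually used and the argument of \cite{poulias2021} does not break) is the other place requiring care.
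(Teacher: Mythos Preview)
Your proposal is correct and follows the paper's approach, citing all the right lemmas in the right roles. One clarification: the split is not a traditional major/minor-arc decomposition over all small-denominator rationals $a/q$ but simply by the size of $\xi$ --- $\Sigma_1$ is $1 \leq \xi \leq N/\log^b N$ and $\Sigma_2$ is $N/\log^b N \leq \xi \leq N/2$ (with the symmetry $\lvert\hat\Lambda(\xi)\rvert=\lvert\hat\Lambda(-\xi)\rvert$ handling the rest) --- so \cref{lem:small-xi-bound} treats only $\xi/N$ near $0$, there is no sum over multiple rationals, and the single threshold $b = 4kA + 2Bc + 4k$ is chosen so that the three factors in the H\"older estimate for $\Sigma_1$ balance to $\Oh(\log^{-A} N)$.
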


As mentioned in \cref{outline}, we split this sum into three parts: the main term $\xi = 0$, the characters ``close" to the trivial character, $\Sigma_1$, and the characters ``far" from the trivial character, $\Sigma_2$.

Let $b = 4kA + 2Bc + 4k$, where $k = \ceil*{\bigl((\floor{2c} + 1)(\floor{2c} + 2) + 1 \bigr)/2}$, as in \cref{thm:poulias}. We can write
\begin{equation}\label{eqn:main-sum-decomp}
    \sum_{\xi = 0}^{N - 1} \biggl\lvert \frac{\hat{\Lambda}(\xi)}{N} \biggr\rvert^2 \Biggl(\frac{1}{M} \sum_{m = 1}^M \e\biggl(\! -\frac{\xi \floor{m^c}}{N} \biggr)\Biggr) 
    = \biggl(\frac{1}{N} \sum_{n = 1}^N \Lambda(n) \biggr)^2 + \Sigma_1 + \Sigma_2,
\end{equation}
where
\begin{align*}
    \Sigma_1 &= \sum_{\xi = 1}^{\frac{N}{\log^b N}} \biggl\lvert \frac{\hat{\Lambda}(\xi)}{N} \biggr\rvert^2 \Biggl(\frac{1}{M} \sum_{m=1}^M \e\biggl(\! -\frac{\xi \floor{m^c}}{N} \biggr)\Biggr)\\
\intertext{and} 
    \Sigma_2 &= \mkern-10mu \sum_{\xi = \frac{N}{\log^b N}}^{\frac{N}{2}} \!\! \biggl\lvert \frac{\hat{\Lambda}(\xi)}{N} \biggr\rvert^2 \Biggl(\frac{1}{M} \sum_{m=1}^M \e\biggl(\! -\frac{\xi \floor{m^c}}{N} \biggr)\Biggr).
\end{align*}

Since $\lvert \hat{\Lambda}(\xi) \rvert = \lvert \hat{\Lambda}(-\xi) \rvert$, we can reduce to the case where $\xi \leq N/2$. Furthermore, since $\frac{1}{N} \sum_{n = 1}^{N} \Lambda(n) = 1 + \Oh \bigl(e^{-t \sqrt{\log N}} \bigr)$ (\cref{thm:prime-number-equiv}) and $e^{-t \sqrt{\log N}}$ decays faster than $\log^{-k} N$ for any $k$, it suffices to show the following propositions by \cref{eqn:main-sum-decomp}.

\begin{Proposition}\label{prop:estimate-sigma1}
    For any \(A \geq 1\),
    \[
        \Sigma_1 = \Oh\biggl(\frac{1}{\log^A N} \biggr).
    \]
\end{Proposition}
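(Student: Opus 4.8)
Here is the plan. Throughout write $w_\xi := \bigl|\hat\Lambda(\xi)/N\bigr|^2$ and $S(\xi) := \frac{1}{M}\sum_{m=1}^{M}\e\bigl(-\xi\floor{m^c}/N\bigr)$, so that $|S(\xi)| \leq 1$ and $\Sigma_1 = \sum_{1 \leq \xi \leq N/\log^b N}w_\xi S(\xi)$. The idea is to peel off a short initial block of frequencies, on which $\hat\Lambda$ has classical square‑root cancellation straight from the Prime Number Theorem, and to treat the rest by playing a Vinogradov bound for $\hat\Lambda(\xi)$ against Poulias's Diophantine estimate through Hölder's inequality and Parseval. Accordingly I would split $\Sigma_1 = \Sigma_1' + \Sigma_1''$, where $\Sigma_1' = \sum_{1 \leq \xi \leq \log^b N}w_\xi S(\xi)$ and $\Sigma_1'' = \sum_{\xi \in J}w_\xi S(\xi)$ with $J := \{\xi \in \Z : \log^b N < \xi \leq N/\log^b N\}$.

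For $\Sigma_1'$ I would prove (this is the content of \cref{lem:small-xi-bound}) that $\lvert\hat\Lambda(\xi)\rvert \ll \xi N e^{-t'\sqrt{\log N}}$ for every $\xi \geq 1$, directly from \cref{thm:prime-number-equiv} by Abel summation: writing $\psi(t) := \sum_{n \leq t}\Lambda(n)$ one has $\hat\Lambda(\xi) = \psi(N)\e(-\xi) + \frac{2\pi i\xi}{N}\int_1^N \psi(t)\,\e(-\xi t/N)\,dt$, and substituting $\psi(t) = t + \bigl(\psi(t)-t\bigr)$ the linear part together with the boundary term contributes $N\e(-\xi) + \frac{2\pi i\xi}{N}\int_1^N t\,\e(-\xi t/N)\,dt = N - N + \Oh(1)$ — the cancellation because $\e(-\xi) = 1$ for $\xi \in \Z$ and $\int_0^1 u\,\e(-\xi u)\,du = \frac{i}{2\pi\xi}$ — whereas the error part contributes $\ll N e^{-t'\sqrt{\log N}} + \frac{\xi}{N}\int_1^N\lvert\psi(t)-t\rvert\,dt \ll \xi N e^{-t''\sqrt{\log N}}$. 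Hence $w_\xi \ll \xi^2 e^{-2t''\sqrt{\log N}}$, and $\lvert\Sigma_1'\rvert \leq \sum_{\xi \leq \log^b N}w_\xi \ll \log^{3b}N\cdot e^{-2t''\sqrt{\log N}} = \oh\bigl(\log^{-A}N\bigr)$, since $e^{t''\sqrt{\log N}}$ outgrows every power of $\log N$.

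For $\Sigma_1''$ the key observation is that once $\xi > \log^b N$ the first non‑trivial convergent $1/\floor{N/\xi}$ of $\xi/N$ has denominator $q = \floor{N/\xi} \asymp N/\xi$, which is large; since $\gcd(1,q) = 1$ and $\lvert\xi/N - 1/q\rvert < 1/q^2$, \cref{thm:vinogradov} applies and gives $\lvert\hat\Lambda(\xi)\rvert \ll \bigl(\sqrt{N\xi} + N^{4/5} + N/\sqrt{\xi}\bigr)\log^4 N$ — this is \cref{lem:rational-approx-bound} — so that, uniformly for $\xi \in J$,
\[
    w_\xi \ll \Bigl(\frac{\xi}{N} + N^{-2/5} + \frac{1}{\xi}\Bigr)\log^8 N \ll \log^{8-b}N.
\]
Writing $w_\xi S(\xi) = w_\xi^{1 - 1/(2k)}\cdot\bigl(w_\xi^{1/(2k)}S(\xi)\bigr)$ and applying Hölder with exponents $\frac{2k}{2k-1}$ and $2k$,
\begin{align*}
    \lvert\Sigma_1''\rvert
    &\leq \Bigl(\sum_{\xi \in J}w_\xi\Bigr)^{1 - \frac{1}{2k}}\Bigl(\sum_{\xi \in J}w_\xi\,\lvert S(\xi)\rvert^{2k}\Bigr)^{\frac{1}{2k}}\\
    &\leq \Bigl(\sum_{\xi=0}^{N-1}w_\xi\Bigr)^{1 - \frac{1}{2k}}\Bigl(\max_{\xi \in J}w_\xi\Bigr)^{\frac{1}{2k}}\Bigl(\sum_{\xi=0}^{N-1}\lvert S(\xi)\rvert^{2k}\Bigr)^{\frac{1}{2k}}.
\end{align*}
By \cref{lem:parseval-bound} (namely \cref{thm:parseval} together with $\psi(N) \ll N$ and $\Lambda(n) \leq \log N$), $\sum_{\xi=0}^{N-1}w_\xi = \frac{1}{N}\sum_{n=1}^{N}\Lambda(n)^2 \ll \log N$; the middle factor is $\ll \log^{(8-b)/(2k)}N$ by the display above; and \cref{lem:poulias-bound} gives $\sum_{\xi=0}^{N-1}\lvert S(\xi)\rvert^{2k} \ll N M^{-c} = \log^{Bc}N$. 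For the last bound one expands the $2k$‑th power and uses orthogonality on $\Z/N\Z$: the combination $\sum_{i \leq k}\floor{m_i^c} - \sum_{i > k}\floor{m_i^c}$ has absolute value at most $2kM^c < N$, so orthogonality forces it to vanish, and comparing with its fractional parts then forces $\bigl\lvert\sum_{i \leq k}m_i^c - \sum_{i>k}m_i^c\bigr\rvert < k$; \cref{thm:poulias} (valid since $2k > (\floor{2c}+1)(\floor{2c}+2)+1$ by the choice of $k$) bounds the number of such $(m_1,\dots,m_{2k}) \in [1,M]^{2k}$ by $\ll M^{2k-c}$. Multiplying the three factors gives $\lvert\Sigma_1''\rvert \ll \log^{(2k - 1 + 8 - b + Bc)/(2k)}N$, and since $b = 4kA + 2Bc + 4k$ and $Bc = A+2$, the exponent is $\leq -A$; hence $\lvert\Sigma_1''\rvert \ll \log^{-A}N$, which with the bound on $\Sigma_1'$ proves the proposition.

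The delicate point is precisely this final bookkeeping, because none of the three inputs is adequate on its own: Vinogradov alone only yields $\sum_{\xi \in J}w_\xi \ll \log^9 N$, Poulias yields only a $2k$‑th moment (not a pointwise) bound for $S$, and Parseval yields only $\sum_{\xi}w_\xi \ll \log N$. What rescues the argument is the strong \emph{uniform} estimate $\max_{\xi \in J}w_\xi \ll \log^{8-b}N$, which Hölder lets us trade against the $2k$‑th moment of $S$; this is exactly why the block $\xi \leq \log^b N$ must be discarded first (there the convergent denominator $\floor{N/\xi}$ is too large for \cref{thm:vinogradov} to improve on the trivial bound $\lvert\hat\Lambda(\xi)\rvert \leq \psi(N)$, and one must instead fall back on \cref{thm:prime-number-equiv}), and the value of $b$ in the statement is calibrated so that, after also spending the $\log N$ from Parseval and the $\log^{Bc}N$ from Poulias, the total exponent lands exactly at $-A$.
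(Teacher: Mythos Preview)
Your argument is correct and is essentially the paper's own proof: the same H\"older decomposition playing Parseval (\cref{lem:parseval-bound}), a uniform Vinogradov-type sup bound on $\hat\Lambda$ (\cref{lem:rational-approx-bound,lem:small-xi-bound}), and Poulias's $2k$-th-moment estimate (\cref{lem:poulias-bound}) against one another, with the same final bookkeeping on $b$. The only differences are cosmetic --- you split off $\xi \leq \log^b N$ and handle it by Abel summation (somewhat slicker than the paper's step-function approximation in \cref{lem:small-xi-bound}), and you feed \cref{thm:vinogradov} the explicit convergent $1/\floor{N/\xi}$ rather than invoking Dirichlet's theorem as in \cref{lem:rational-approx-bound}.
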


\begin{Proposition}\label{prop:estimate-sigma2}
    For any \(A \geq 1\),
    \[
        \Sigma_2 = \Oh\biggl(\frac{1}{\log^A N} \biggr).
    \]
\end{Proposition}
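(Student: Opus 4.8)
The plan is to detach the exponential sum from the $\ell^{2}$-mass of $\hat\Lambda$ and then bound that sum uniformly over the range of $\xi$ in question.

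\emph{Step 1 (reduction to an exponential sum).} Unlike in $\Sigma_1$, here it suffices to estimate $\hat\Lambda$ crudely via Parseval. By \cref{thm:parseval}, the pointwise bound $\Lambda(n)\leq\log N$ on $\{1,\dots,N\}$, and \cref{thm:prime-number-equiv},
\[
  \sum_{\xi=0}^{N-1}\biggl\lvert\frac{\hat\Lambda(\xi)}{N}\biggr\rvert^{2}
  =\frac1N\sum_{n=1}^{N}\Lambda(n)^{2}
  \leq\frac{\log N}{N}\sum_{n=1}^{N}\Lambda(n)\lesssim\log N ,
\]
so, since the $\xi$-sum in $\Sigma_2$ runs over a subset of $\{0,\dots,N-1\}$,
\[
  \lvert\Sigma_2\rvert\lesssim\log N\cdot\max_{\frac{N}{\log^{b}N}\leq\xi\leq\frac N2}
  \Biggl\lvert\frac1M\sum_{m=1}^{M}\e\biggl(\!-\frac{\xi\floor{m^{c}}}{N}\biggr)\Biggr\rvert .
\]
It therefore suffices to show that this exponential sum is $\Oh(\log^{-A-1}N)$ uniformly in $\xi$; the size of $b$ (hence of $B$) is used precisely here.

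\emph{Step 2 (removing the floor with \Erdos--\Turan).} Put $\theta=\xi/N\in[\log^{-b}N,\tfrac12]$. The obstruction is that $y\mapsto\e(-\theta\floor y)$ is discontinuous, so one cannot apply van der Corput directly; the aim is to replace $\floor{m^{c}}$ by $m^{c}$. Using $\floor{m^{c}}=m^{c}-\{m^{c}\}$ and $\e(-\theta\floor{m^{c}})=\e(-\theta m^{c})\e(\theta\{m^{c}\})$, one rewrites $\tfrac1M\sum_m\e(-\theta\floor{m^{c}})$ in terms of the empirical measure of $(m^{c}\bmod 1)_{m\leq M}$ tested against an arc-type function (this is packaged by \cref{lem:expectation-to-measure}, \cref{lem:measure-stuff}, \cref{lem:ft-bounds-for-measures} and the partition lemma \cref{lem:partition-bound}) and then invokes the \Erdos--\Turan{} inequality (\cref{thm:erdos-turan}); this is the content of \cref{lem:turan-bound}. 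The outcome is that, for any parameter $K$,
\[
  \Biggl\lvert\frac1M\sum_{m=1}^{M}\e\bigl(-\theta\floor{m^{c}}\bigr)\Biggr\rvert
  \lesssim\frac1K+\sum_{\alpha}w_{\alpha}\Biggl\lvert\frac1M\sum_{m=1}^{M}\e\bigl(\alpha m^{c}\bigr)\Biggr\rvert ,
\]
where $\alpha$ runs over $\Oh(K)$ admissible frequencies, each with $\log^{-b}N\lesssim\lvert\alpha\rvert\lesssim K$, and $\sum_{\alpha}w_{\alpha}\lesssim\log K$. The remaining sums have a smooth phase.

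\emph{Step 3 (van der Corput and conclusion).} For an admissible $\alpha$ and a dyadic block $I\subseteq[X,2X]$ with $1\leq X\leq M$, the phase $f(x)=\alpha x^{c}$ satisfies $\lvert f^{(r)}(x)\rvert\asymp\lvert\alpha\rvert X^{c-r}$ for $1\leq r\leq q+2$, with implied constants depending only on $c$ (the coefficients $c(c-1)\cdots(c-r+1)$ never vanish because $c\notin\Z$); thus \cref{thm:vdc-estimate} applies with $F\asymp\lvert\alpha\rvert X^{c}$, and we take $q=\ceil c-2\geq1$, so that $q+2>c$. On blocks where $\lvert\alpha\rvert X^{c}$ is large this yields
\[
  \Biggl\lvert\sum_{n\in I}\e\bigl(f(n)\bigr)\Biggr\rvert
  \lesssim\bigl(\lvert\alpha\rvert X^{c}\bigr)^{\frac{1}{2^{q+2}-2}}X^{1-\frac{q+2}{2^{q+2}-2}}+\frac{X^{1-c}}{\lvert\alpha\rvert} .
\]
Since $\lvert\alpha\rvert\geq\log^{-b}N$ and $M^{c}=N\log^{-Bc}N$, we have $\lvert\alpha\rvert M^{c}\geq N\log^{-Bc-b}N$ — at least $N$ divided by a power of $\log N$ — while $M\leq N^{1/c}\ll N^{1/2}$ because $c>2$. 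Hence the $X^{1-c}/\lvert\alpha\rvert$ term is negligible ($c>2$ makes $\sum_{X}X^{1-c}$ converge), and since $q+2>c$ the first term, summed over the $\Oh(\log N)$ dyadic blocks (the $\Oh(\log\log N)$ blocks with $X$ at most a power of $\log N$ being bounded trivially, contributing at most a power of $\log N$ in total), is at most $M\,N^{-\delta}$ up to a power of $\log N$, for some $\delta=\delta(c)>0$. Thus $\bigl\lvert\tfrac1M\sum_m\e(\alpha m^{c})\bigr\rvert$ is at most $N^{-\delta}$ times a power of $\log N$, uniformly over admissible $\alpha$; inserting this into the bound of Step 2 (the weighted sum over $\alpha$ converges) shows that $\max_\xi\bigl\lvert\tfrac1M\sum_m\e(-\xi\floor{m^{c}}/N)\bigr\rvert$ is at most $\tfrac1K$ plus $N^{-\delta}$ times a power of $\log N$. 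Taking $K=\log^{A+1}N$ makes this $\Oh(\log^{-A-1}N)$, and combining with Step 1 gives $\Sigma_2=\Oh(\log^{-A}N)$; the cutoff $K$ and the $\log$-losses constrain $b$ only mildly, well inside $b=4kA+2Bc+4k$.

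\emph{Main obstacle.} The genuinely delicate part is Step 2 — running the \Erdos--\Turan{} reduction while keeping a power saving. Naively expanding $\e(\theta\{m^{c}\})$ into its Fourier series fails, because the sawtooth $\{\cdot\}$ has only a conditionally convergent Fourier series; the finite family of clean sums $\sum_m\e(\alpha m^{c})$ together with the logarithmic weights $w_\alpha$ must instead be produced by the \Erdos--\Turan{}/majorant-polynomial machinery spread across \cref{lem:measure-stuff} through \cref{cor:bound-for-turan-bound}, with the cutoff $K$ balanced against the discrepancy actually obtained. Once a clean phase $\alpha m^{c}$ with $\lvert\alpha\rvert$ bounded below by a power of $1/\log N$ is isolated, the van der Corput input is essentially automatic — and it is exactly the hypothesis $c>2$ that makes the favorable exponent $q+2>c$ available.
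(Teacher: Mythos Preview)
Your argument is the paper's: Parseval on $\lvert\hat\Lambda\rvert^{2}$ to reduce $\Sigma_2$ to a uniform exponential-sum bound (this is exactly \cref{lem:sigma2-proof-works}), then the \Erdos--\Turan{} machinery plus van der Corput for that bound (this is \cref{prop:estimate-sigma2-details}).

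One point to tighten in Step~2: the lemmas you cite do not produce the single-parameter shape
\(\lesssim K^{-1}+\sum_{\alpha}w_{\alpha}\lvert\cdot\rvert\) with \(\sum_{\alpha}w_{\alpha}\lesssim\log K\).
What \cref{lem:turan-bound} actually delivers is the \emph{two}-parameter bound
\[
\Bigl\lvert\tfrac1M\textstyle\sum_{m}\e(-\theta\floor{m^{c}})\Bigr\rvert
\;\lesssim\;\frac{1}{\gamma}\Bigl(\frac1K+\sum_{k\le K}\frac{\lvert\hat\mu_1(k)\rvert+\lvert\hat\mu_2(k)\rvert+\lvert\hat\mu_3(k)\rvert}{k}\Bigr)+\theta\gamma,
\]
where $\gamma$ is the mesh of the partition from \cref{lem:partition-bound}. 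The paper then fixes $\gamma=\log^{-(A+1)}N$ (\cref{cor:bound-for-turan-bound}), so the additive $\theta\gamma\le\gamma$ term is the true bottleneck, while the $\gamma^{-1}=\log^{A+1}N$ prefactor is swallowed by the genuine power saving $U(\xi)\lesssim M^{-t}$ coming from van der Corput; the paper's $K$ is optimised separately inside \cref{prop:estimate-sigma2-details}. In effect your lone parameter ``$K$'' is playing the role of the paper's $\gamma$, not its $K$ --- harmless for the conclusion, since the power saving absorbs any log loss, but the intermediate bound you wrote does not literally follow from the cited lemmas. (A minor aside: the admissible frequencies are $\alpha\in\{k,\,k\pm\xi/N:1\le k\le K\}$, all of which satisfy $\lvert\alpha\rvert\ge\tfrac12$, not merely $\lvert\alpha\rvert\gtrsim\log^{-b}N$; this makes Step~3 a touch cleaner.)
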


Combining \cref{prop:estimate-sigma1} and \cref{prop:estimate-sigma2} gives \cref{thm:main-thm}. We now aim to prove these results in the following two sections.

%PROOF FOR SIGMA 1
\subsection{Estimating First Sum}
In this section we prove \cref{prop:estimate-sigma1}. The idea is as follows (it might help to read this paragraph in conjunction with the proof of \cref{prop:estimate-sigma1} at the end of the section): Parseval's identity provides good bounds for the sum of $\lvert \hat{\Lambda} \rvert^2$, and the even power sums of the sum of  $\e(-\xi \floor{m^c}/N)$ have an obvious interpretation as the number of solutions to a Diophantine equation. So, we decouple the $\hat{\Lambda}$ sum from the $\e(-\xi \floor{m^c}/N)$ sum to deal with them separately. In order to decouple, we need to apply H\"{o}lder's inequality, but in doing so the exponent of 2 on $\hat{\Lambda}$ changes. To account for this, we rewrite the exponent as $2 = 1/k + (2 - 1/k)$ and factor out the maximum of the first summand. This leaves us to deal with the maximum value of $\hat{\Lambda}$ and bound the number of solutions to a Diophantine equation.

First we dispose of the trivial term, the second moment of $\hat{\Lambda}/N$.
\begin{Lemma}\label{lem:parseval-bound}
    For any $b \geq 1$ and any $k \geq 1$ the following holds:
    \[
        \Biggl(\sum_{\xi = 1}^{\log^b N} \biggl\lvert \frac{\hat{\Lambda}(\xi)}{N} \biggr\rvert^{2} \Biggr)^{1-\frac{1}{2k}} \mkern-32mu = \Oh(\log N).
    \]
\end{Lemma}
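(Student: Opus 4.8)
The plan is to bound the inner sum $\sum_{\xi=1}^{\log^b N}\lvert \hat\Lambda(\xi)/N\rvert^2$ by a constant times $\log N$ using Parseval's identity, and then to note that raising a quantity of size $\Oh(\log N)$ to a power at most $1$ still leaves it $\Oh(\log N)$. So essentially the whole content is the trivial second-moment estimate for $\hat\Lambda$.

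First, since every summand $\lvert\hat\Lambda(\xi)/N\rvert^2$ is nonnegative and $\log^b N < N$ once $N$ is large (which we may assume, the claim being asymptotic), the partial sum is dominated by the full sum over $\Z/N\Z$:
\[
    \sum_{\xi=1}^{\log^b N}\biggl\lvert\frac{\hat\Lambda(\xi)}{N}\biggr\rvert^2 \leq \frac{1}{N^2}\sum_{\xi \in \Z/N\Z}\bigl\lvert\hat\Lambda(\xi)\bigr\rvert^2 .
\]
By Parseval's identity (\cref{thm:parseval}), applied to $\Lambda$ restricted to $\{1,\dots,N\}$ and viewed as a function on $\Z/N\Z$, the right-hand side equals $\frac1N\sum_{n}\Lambda(n)^2$, the sum ranging over the relevant residues (the precise convention is irrelevant for an $\Oh$-bound). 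Now use $\Lambda(n)\leq\log n\leq\log N$ for $1\leq n\leq N$ to get $\sum_n\Lambda(n)^2 \leq \log N\sum_{n=1}^N\Lambda(n)$, and the Prime Number Theorem in the form of \cref{thm:prime-number-equiv} gives $\sum_{n=1}^N\Lambda(n)=N\bigl(1+\Oh(e^{-t\sqrt{\log N}})\bigr)=\Oh(N)$. Combining, $\sum_{\xi=1}^{\log^b N}\lvert\hat\Lambda(\xi)/N\rvert^2 = \Oh(\log N)$.

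Finally, since $k\geq1$ we have $0<1-\frac{1}{2k}\leq1$, and the bracketed quantity is $\Oh(\log N)$ and in particular eventually $\geq1$, so
\[
    \Biggl(\sum_{\xi=1}^{\log^b N}\biggl\lvert\frac{\hat\Lambda(\xi)}{N}\biggr\rvert^2\Biggr)^{1-\frac1{2k}} = \Oh\bigl((\log N)^{1-\frac1{2k}}\bigr) = \Oh(\log N),
\]
which is the claim, uniformly in $b,k\geq1$. The only points needing any care are the normalization convention in Parseval's identity and the elementary bound $\Lambda(n)\leq\log N$ on $[1,N]$; there is no genuine obstacle, and this lemma serves purely to package the trivial $L^2$-estimate that will be fed, together with Hölder's inequality and \cref{thm:poulias}, into the proof of \cref{prop:estimate-sigma1}.
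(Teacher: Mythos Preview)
Your proof is correct and follows essentially the same route as the paper's: extend the sum to all of $\Z/N\Z$, apply Parseval, bound $\Lambda(n)^2\leq(\log N)\Lambda(n)$, and invoke the Prime Number Theorem. The only remark is that your parenthetical ``in particular eventually $\geq 1$'' is unnecessary (and not obviously justified), since $0\leq x=\Oh(\log N)$ already gives $x^{\alpha}=\Oh(\log N)$ for any $0<\alpha\leq 1$ regardless of whether $x\geq 1$.
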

\begin{proof}
    We have
    \begin{equation*}
        \sum_{\xi = 1}^{\frac{N}{\log^b N}} \biggl\lvert \frac{\hat{\Lambda}(\xi)}{N} \biggr\rvert^{2}
        \leq \sum_{\xi = 1}^{N} \biggl\lvert \frac{\hat{\Lambda}(\xi)}{N} \biggr\rvert^{2}
        \! = \frac{1}{N}\sum_{\xi = 1}^{N} \bigl(\Lambda(\xi)\bigr)^2,
    \end{equation*}
    where the equality holds by Parseval's identity (\cref{thm:parseval}). Now,
    \begin{equation*}
        \frac{1}{N} \sum_{\xi = 1}^{N} \bigl(\Lambda(\xi)\bigr)^2
        \leq \frac{1}{N} \biggl(\sum_{\xi = 1}^{N} \Lambda(\xi) \biggr) \max_{\xi} \Lambda(\xi)
        \leq \frac{1}{N} \biggl(\sum_{\xi = 1}^{N} \Lambda(\xi) \biggr) \log N.
    \end{equation*}
   By the Prime Number Theorem, $\displaystyle \lim_{N \to \infty} \frac{1}{N} \biggl(\sum_{\xi = 1}^{N} \Lambda(\xi) \biggr) = 1$ and the result follows.
\end{proof}

Now we deal with the supremum of $\hat{\Lambda}(\xi)$ over $\xi \leq N/\log^b N$. When $\xi \geq \log^b N$, we can use Vinogradov's bounds because $\xi/N$ has a rational approximation with a denominator that is not too small (recall that when the denominator is too small, Vinogradov's theorem does not give a useful estimate). In the other case, we use a more elementary argument, approximating $\e(\cdot)$ by simple functions and using the Prime Number Theorem. \cref{lem:rational-approx-bound} addresses the first case and \cref{lem:small-xi-bound} deals with the second. Together, they bound $\bigl\lvert \hat{\Lambda}(\xi) \bigr\rvert$ uniformly over $\xi \leq N/\log^b N$.

\begin{Lemma}\label{lem:rational-approx-bound}
    Suppose $b\geq 16$. The following holds:
    \[
        \sup_{\mathclap{\log^b N < \xi \leq \frac{N}{\log^b N}}}\; \bigl\lvert \hat{\Lambda}(\xi) \bigr\rvert
        = \Oh\biggl(\frac{N}{\log^{\frac{b}{4}} N} \biggr).
    \]
\end{Lemma}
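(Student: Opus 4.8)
The plan is to use the rational approximation coming from Dirichlet's theorem together with Vinogradov's bound (\cref{thm:vinogradov}) and to check that, in the stated range of $\xi$, the denominator $q$ of the approximation is neither too small nor too large, so that the dominant term $N/\sqrt{q}$ in Vinogradov's estimate is controlled.

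\begin{proof}[Proof sketch]
Fix $\xi$ with $\log^b N < \xi \leq N/\log^b N$. By Dirichlet's approximation theorem (applied with denominator bound $Q = N^{1/2}$, say, or more conveniently $Q = \sqrt{N}$) there exist coprime integers $a, q$ with $1 \leq q \leq \sqrt{N}$ and $\bigl\lvert \frac{\xi}{N} - \frac{a}{q} \bigr\rvert \leq \frac{1}{q\sqrt{N}} \leq \frac{1}{q^2}$, so the hypothesis of \cref{thm:vinogradov} is satisfied and
\[
    \bigl\lvert \hat{\Lambda}(\xi) \bigr\rvert \leq C \Bigl( \tfrac{N}{\sqrt{q}} + N^{4/5} + \sqrt{qN} \Bigr) \log^4 N.
\]
Since $q \leq \sqrt{N}$ we have $\sqrt{qN} \leq N^{3/4}$, so the last two terms are both $\Oh(N^{4/5} \log^4 N) = \Oh\bigl(N / \log^{b/4} N\bigr)$ (using $b/4 \geq 4$, crudely; any fixed power of $\log$ is absorbed by the saved power of $N$). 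It therefore remains to show $N/\sqrt{q} = \Oh\bigl(N/\log^{b/4} N\bigr)$, i.e.\ that $q \gtrsim \log^{b/2} N$.

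To lower-bound $q$, I use that $\xi/N$ is \emph{not} too well approximated by a rational with small denominator. Concretely, if $q < \log^{b/2} N$ then from $\lvert \xi/N - a/q \rvert \leq 1/(q\sqrt N)$ we get $\lvert \xi q - a N \rvert \leq q/\sqrt N < 1$ for $N$ large, forcing $\xi q = a N$, hence $q \mid \xi q = aN$ and, since $\gcd(a,q)=1$, $q \mid N$ actually $N \mid \xi q$... more simply: $\xi q = aN$ means $\xi = aN/q$, so $q \mid N$ and $\xi$ is an integer multiple of $N/q \geq N/\log^{b/2} N > N/\log^b N \geq \xi$ — a contradiction unless $a = 0$, which is impossible since $\gcd(a,q)=1$ and $q \geq 1$ would force $q = 1$, $a = 0$, i.e.\ $\xi/N$ within $1/\sqrt N$ of $0$, contradicting $\xi > \log^b N$. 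Either way we conclude $q \geq \log^{b/2} N$, whence $N/\sqrt q \leq N/\log^{b/4} N$. Combining the three terms gives the claimed bound, uniformly in $\xi$.
\end{proof}

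The main obstacle is the bookkeeping around the denominator $q$: one must simultaneously ensure $q$ is small enough that Vinogradov's hypothesis $\lvert \xi/N - a/q \rvert \leq 1/q^2$ holds (handled by the choice $Q = \sqrt N$ in Dirichlet) and large enough that $N/\sqrt q$ beats $N/\log^{b/4} N$ (handled by the "$\xi$ not near a rational with tiny denominator" argument, which is exactly where the lower cutoff $\xi > \log^b N$ is used). The exponent $b/4$ in the conclusion, versus the input $b \geq 16$, is just slack: $q \gtrsim \log^{b/2} N$ gives $N/\sqrt q \lesssim N/\log^{b/4} N$, and $b \geq 16$ guarantees $b/4 \geq 4$ so that the polynomial savings $N^{4/5}$, $N^{3/4}$ over $N$ dominate the stray $\log^4 N$ from Vinogradov and the $\log^{b/4} N$ we are aiming for.
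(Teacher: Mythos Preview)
Your overall strategy---Dirichlet plus Vinogradov plus a lower bound on the denominator $q$---is exactly the paper's. The gap is in the execution of the lower bound on $q$, and it traces back to your choice of Dirichlet parameter $Q = \sqrt{N}$.

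First, an arithmetic slip: from $\bigl\lvert \frac{\xi}{N} - \frac{a}{q}\bigr\rvert \leq \frac{1}{q\sqrt{N}}$ you do \emph{not} get $\lvert \xi q - aN\rvert \leq q/\sqrt{N}$; clearing denominators (multiply by $Nq$) gives $\lvert \xi q - aN\rvert \leq \sqrt{N}$, which is far too weak to force $\xi q = aN$. So the divisibility argument that follows collapses. Second, and more structurally, your dismissal of the case $a = 0$ is incorrect: $a = 0$ forces $q = 1$ and $\xi \leq \sqrt{N}$, but $\xi > \log^b N$ does \emph{not} contradict $\xi \leq \sqrt{N}$ (the former is polylogarithmic, the latter a genuine power of $N$). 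So for $\xi$ in the range $\log^b N < \xi \leq \sqrt{N}$ your Dirichlet approximation may well return $a/q = 0/1$, and then Vinogradov's bound reads $\lvert\hat\Lambda(\xi)\rvert \lesssim N\log^4 N$, which is useless.

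The fix is to run Dirichlet with $Q = N/\log^b N$ instead of $\sqrt N$ (this is what the paper does). Then $q \leq N/\log^b N$ and $\lvert \xi q - aN\rvert \leq \log^b N$. Now $a = 0$ is genuinely impossible, since it would give $\xi q \leq \log^b N$ while $\xi > \log^b N$ and $q \geq 1$. With $a \geq 1$ one gets $\xi q \geq N - \log^b N$; if also $q \leq \log^{b/2} N$ then $\xi q \leq (N/\log^b N)\log^{b/2} N = N/\log^{b/2} N$, a contradiction for large $N$. Hence $q > \log^{b/2} N$ as desired, and moreover $q \leq N/\log^b N$ gives $\sqrt{qN} \leq N/\log^{b/2} N$, so the third Vinogradov term is also controlled. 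The rest of your write-up (handling $N^{4/5}$ and the stray $\log^4 N$) is fine.
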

\begin{proof}
    First, observe that 
    $\displaystyle \lim_{N \to \infty} \biggl(\frac{1}{\log^{\frac{b}{2}} N} + \frac{\log^b N}{N} \biggr) = 0$. 
    Therefore, there exists $N_0 \in \N$ such that, for all \(N \geq N_0\),
    \[
        \frac{N}{\log^{\frac{b}{2}} N} + \log^b N < N.
    \]
    
    Suppose that $N \geq N_0$ and $\xi \in (\log^b N, N/\log^b N]$. By Dirichlet's approximation theorem, we can find $a \in \Z$ and $q \leq N/\log^b N$ such that $\gcd(a, q) = 1$ and 
    \[
        \biggl\lvert\frac{\xi}{N} - \frac{a}{q} \biggr\rvert 
        \leq \frac{1}{\frac{N}{\log^b N} q} \leq \frac{1}{q^2}.
    \]
    Clearing denominators, the first inequality gives us \(\lvert q\xi - aN \rvert \leq \log^b N\), meaning \(q\xi \geq aN - \log^b N\). If \(a = 0\), then $\lvert q\xi \rvert \leq \log^b N$. However, by our choice of \(\xi\), $\log^b N < q\xi$, a contradiction. Thus, \(a \neq 0\), and
    \begin{equation*}
         q\xi \geq aN - \log^b N \geq N - \log^b N.
    \end{equation*}
    
    Now, aiming for another contradiction, assume that $q \leq \log^{b/2} N$. As such, 
    \begin{equation*}
        q\xi \leq (\log^{\frac{b}{2}} N) \frac{N}{\log^b N}
        = \frac{N}{\log^{\frac{b}{2}} N}.
    \end{equation*}
    Rearranging terms gives us that
    \begin{equation*}
        N - \log^b N \leq \frac{N}{\log^{\frac{b}{2}} N}.
    \end{equation*}
    However, by hypothesis, \(N > (N/\log^{b/2} N) + \log^b N\), which yields a contradiction. Therefore, $q > \log^{b/2} N$.
    
    Next, applying Vinogradov's theorem (\cref{thm:vinogradov}) gives us
    \begin{align*}
        \biggl\lvert \sum_{n = 0}^{N - 1} \Lambda(n) \e\biggl(\frac{\xi}{N}n \biggr) \biggr\rvert
        &\leq C \biggl(\frac{N}{\sqrt{q}} + N^{\frac{4}{5}} + \sqrt{qN} \biggr) \log^4 N\\
        &\leq C \biggl(\frac{N}{\log^{\frac{b}{4}} N} + N^{\frac{4}{5}} + \frac{N}{\log^{\frac{b}{2}} N} \biggr) \log^4 N.\\
    \intertext{Thus, since the above holds for all $\xi \in (\log^b N, N/\log^b N]$,}
        \sup_{\mathclap{\log^b N < \xi \leq \frac{N}{\log^b N}}}\; \bigl\lvert \hat{\Lambda}(\xi) \bigr\rvert 
        &\leq C \biggl(\frac{N}{\log^{\frac{b}{4}} N} + N^{\frac{4}{5}} +\frac{N}{\log^{\frac{b}{2} - 4} N}\biggr)\\
    \intertext{for all $N \geq N_0$. Therefore,}
         \sup_{\mathclap{\log^b N < \xi \leq \frac{N}{\log^b N}}}\; \bigl\lvert \hat{\Lambda}(\xi) \bigr\rvert
         &= \Oh\biggl(\frac{N}{\log^{\frac{b}{4}} N} \biggr),
    \end{align*}
    since, if $b \geq 16$, then $b/2 - 4 \geq b/4$.
\end{proof}

\begin{Lemma}\label{lem:small-xi-bound}
    For any $b, m \geq 1$,
    \[
        \sup_{1 \leq \xi \leq \log^b N}  \biggl\lvert \frac{1}{N} \sum_{n = 0}^{N - 1} \Lambda(n) \e\biggl(\frac{\xi}{N} n \biggr)\biggr\rvert 
        = \Oh\biggl(\frac{1}{\log^m N} \biggr).
    \]
\end{Lemma}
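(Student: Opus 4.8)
The plan is to exploit the fact that $\Lambda$ is, on average, close to the constant function $1$, together with the exact identity $\sum_{n=0}^{N-1}\e(\xi n/N) = 0$, valid for every integer $\xi$ with $1 \le \xi \le N - 1$ (it is a complete geometric sum, since $\e(\xi) = 1$). Writing $\Lambda(n) = 1 + (\Lambda(n) - 1)$ and recalling that $\Lambda(0) = 0$, one gets
\[
\sum_{n=0}^{N-1}\Lambda(n)\,\e\!\left(\frac{\xi n}{N}\right) = \sum_{n=1}^{N-1}\bigl(\Lambda(n) - 1\bigr)\,\e\!\left(\frac{\xi n}{N}\right) + \Oh(1),
\]
so the whole problem reduces to bounding the ``balanced'' exponential sum on the right. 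The reason for subtracting $1$ is that the summatory function $R(x) := \sum_{1 \le n \le x}\bigl(\Lambda(n) - 1\bigr)$ is genuinely small: applying \cref{thm:prime-number-equiv} with $N$ replaced by an arbitrary integer $x \le N$ yields $\sum_{n \le x}\Lambda(n) = x + \Oh\bigl(x\,e^{-t\sqrt{\log x}}\bigr)$, and hence $\lvert R(x)\rvert \lesssim x\,e^{-t\sqrt{\log x}}$ for $x \ge 2$, with $\lvert R(x)\rvert \lesssim 1$ for all $x$.

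Next I would apply Abel summation to $\sum_{n=1}^{N-1}(\Lambda(n)-1)\,\e(\xi n/N)$, writing it as a boundary term $R(N-1)\,\e\bigl(\xi(N-1)/N\bigr)$ minus $\sum_{n=1}^{N-2}R(n)\bigl(\e(\xi(n+1)/N) - \e(\xi n/N)\bigr)$. The consecutive differences satisfy $\lvert \e(\xi(n+1)/N) - \e(\xi n/N)\rvert = \lvert \e(\xi/N) - 1\rvert \le 2\pi\xi/N \le 2\pi\log^b N / N$, so the sum is bounded by
\[
\lvert R(N-1)\rvert + \frac{2\pi\log^b N}{N}\sum_{n=1}^{N-2}\lvert R(n)\rvert \;\lesssim\; N\,e^{-t\sqrt{\log N}} + \frac{\log^b N}{N}\sum_{n \le N} n\,e^{-t\sqrt{\log n}}.
\]
A short calculation gives $\sum_{n \le N} n\,e^{-t\sqrt{\log n}} \lesssim N^2 e^{-t\sqrt{\log N}}$: the summand $h(n) = n\,e^{-t\sqrt{\log n}}$ is increasing once $\log n > t^2/4$ (its logarithmic derivative with respect to $\log n$ is $1 - t/(2\sqrt{\log n}) > 0$ there), so the sum is at most $\Oh(1)$ plus $N$ times its largest term $h(N)$. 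Consequently the whole expression is $\lesssim N\log^b N\,e^{-t\sqrt{\log N}}$, uniformly for $\xi \in [1, \log^b N]$.

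Dividing by $N$, we obtain $\displaystyle\sup_{1 \le \xi \le \log^b N}\Bigl\lvert \tfrac1N\sum_{n=0}^{N-1}\Lambda(n)\,\e(\xi n/N)\Bigr\rvert \lesssim \log^b N\,e^{-t\sqrt{\log N}} + \tfrac1N$, and since $e^{-t\sqrt{\log N}}$ decays faster than any negative power of $\log N$, the right-hand side is $\Oh(\log^{-m} N)$ for every fixed $m \ge 1$, which is the claim. I do not anticipate a serious obstacle here: the only points needing a little care are the uniformity in $\xi$ (which is automatic, as $\xi$ enters only through the harmless factor $\log^b N$ coming from $\lvert\e(\xi/N)-1\rvert$) and the fact that the Prime Number Theorem estimate is used at all scales $x \le N$ rather than only at $x = N$ — this is standard and is exactly what the cited form provides. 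As an alternative matching the ``approximate $\e(\cdot)$ by simple functions'' route in \cref{outline}, one could instead partition $\{0, \dots, N-1\}$ into $\asymp \log^{b+m} N$ consecutive blocks, replace $\e(\xi\cdot/N)$ by its value at an endpoint of each block (incurring an error $\lesssim (\xi/\log^{b+m}N)\sum_{n\le N}\Lambda(n) \lesssim N/\log^m N$), estimate $\sum_{n \in I_j}\Lambda(n)$ by the Prime Number Theorem, and recognize the resulting main sum as a Riemann approximation to $N\int_0^1 \e(\xi t)\, dt = 0$; this gives the same bound.
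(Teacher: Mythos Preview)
Your proof is correct and takes a cleaner route than the paper's. The paper approximates $\e(\xi n/N)$ by the step function $f_k(x)=\e(\floor{kx}/k)$ with $k=\floor{\log^m N}$, partitions $\{0,\dots,N-1\}$ into $\xi k$ arithmetic blocks, applies the Prime Number Theorem on each block to replace $\sum_{n\in I}\Lambda(n)$ by $\lvert I\rvert$, and then observes that the resulting main terms form a complete geometric sum equal to zero; this is precisely the alternative you sketch in your final sentence. Your primary argument via Abel summation reaches the same destination more directly: rather than discretizing $\e(\cdot)$ and invoking the Prime Number Theorem block by block, you package all the partial sums into $R(x)=\sum_{n\le x}(\Lambda(n)-1)$ and let the smoothness of $\e(\xi\cdot/N)$, through the uniform bound $\lvert\e(\xi/N)-1\rvert\le 2\pi\xi/N\le 2\pi(\log^b N)/N$, do the averaging. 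Both arguments rest on the same input---PNT with the error $xe^{-t\sqrt{\log x}}$ at every scale $x\le N$---but partial summation avoids the bookkeeping of the partition and makes the uniformity in $\xi$ transparent. One small slip: the clause ``$\lvert R(x)\rvert\lesssim 1$ for all $x$'' is false as written (that would be dramatically stronger than any known form of the Prime Number Theorem); you presumably intend this only for $x<2$ to cover the boundary case, and indeed you never use it otherwise.
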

\begin{proof}
    By the Prime Number Theorem (\cref{thm:prime-number-equiv}), there exist $N_1 \in \N,  C, t > 0$ such that 
    \begin{equation*}
        \biggl\lvert \frac{1}{N} \sum_{n = 0}^{N - 1} \Lambda(n) - 1 \biggr\rvert 
        \leq \frac{C}{e^{t\sqrt{\log N}}} 
    \end{equation*}
    for all \(N \geq N_1\).
    Now, since $\displaystyle \lim_{N \to \infty} \frac{N}{\log^{m+b} N} = \infty$, there exists $N_2 \in \N$ so that, for all \(N \geq N_2\),
    \[
        \frac{N}{\log^{m + b} N} \geq N_1.
    \]
    Further, it can be verified that 
    \begin{equation*}
        \lim_{N \to \infty} \frac{2C\log^{m + b} N}{e^{t \sqrt{\log \frac{N}{\log^{m + b} N}}}} = 0,
    \end{equation*}
    so we can find $N_3 \in \N$ such that, for all \(N \geq N_3\),
    \begin{equation*}
        \frac{2C\log^{m + b} N}{e^{t \sqrt{\log \frac{N}{\log^{m + b} N}}}}
        \leq \frac{1}{\log^m N}.
    \end{equation*}

    With the above in mind, suppose $N \geq \max\{N_2, N_3\}$ and let $k = \floor{\log^m N}$. Accordingly,
    \begin{align}
        \biggl\lvert \frac{1}{N} \sum_{n = 0}^{N - 1} \Lambda(n) \e\Bigl(\frac{\xi}{N} n \Bigr) \biggr\rvert
        &\leq \biggl\lvert \frac{1}{N} \sum_{n = 0}^{N - 1} \Lambda(n) f_k\Bigl(\frac{\xi}{N} n \Bigr) \biggr\rvert 
        + \biggl\lvert \frac{1}{N} \sum_{n = 0}^{N - 1} \Lambda(n) \biggl(\e\Bigl(\frac{\xi}{N} n \Bigr) - f_k\Bigl(\frac{\xi}{N} n \Bigr)\biggr)\biggr\rvert \nonumber\\
        &\leq \biggl\lvert \frac{1}{N} \sum_{n = 0}^{N - 1} \Lambda(n) f_k\Bigl(\frac{\xi}{N} n \Bigr) \biggr\rvert + \frac{2\pi}{kN} \sum_{n = 0}^{N - 1} \Lambda(n),\label{eqn:from-N-big}
    \end{align}
    where $f_k \colon \R \to \C$ is the function satisfying $f_k(x) = \e\bigl(\floor{kx}/k \bigr)$.
    
    \cref{eqn:from-N-big} holds for all $\xi \in [0, N - 1]$. Suppose \(r \in \N\) such that $1 \leq r \leq \log^b N$. Then,
    \begin{equation}\label{eqn:restricting-xi}
        \biggl\lvert \frac{1}{N} \sum_{n = 0}^{N - 1} \Lambda(n) f_k\Bigl(\frac{rn}{N} \Bigr) \biggr\rvert 
        = \biggl\lvert \frac{1}{N} \sum_{\xi = 0}^{rk - 1} \e\biggl(\frac{\xi}{k} \biggr) \; \cdot \;
        \smashoperator{\sum_{\mkern26mu \frac{\xi N}{rk} \leq n < \frac{(\xi + 1) N}{rk}}}  \Lambda(n) \biggr\rvert.
    \end{equation}
    Further, for each $\xi = 0, 1, \ldots, rk - 1$,
    \begin{align*}
        \biggl\lvert \frac{1}{N} \; \cdot \; \smashoperator{\sum_{\mkern26mu \frac{\xi N}{rk} \leq n < \frac{(\xi + 1)N}{rk}}} \Lambda(n) - \frac{1}{rk} \biggr\rvert
        &= \Biggl\lvert \biggl(\frac{\frac{\xi + 1}{rk}}{\frac{(\xi + 1)N}{rk}} \:\smashoperator{\sum_{\mkern14mu n < \frac{(\xi + 1)N}{{rk}}}} \Lambda(n) - \frac{\xi + 1}{rk} \biggr) 
        - \biggl(\frac{\frac{\xi}{rk}}{\frac{\xi N}{rk}} \:\smashoperator{\sum_{\, n < \frac{\xi N}{rk}}} \Lambda(n) - \frac{\xi}{rk} \biggr)\Biggr\rvert\\
        &\leq \frac{\xi + 1}{rk} \Biggl\lvert \frac{1}{\frac{(\xi + 1)N}{rk}} \:\smashoperator{\sum_{\mkern14mu n < \frac{(\xi + 1)N}{{rk}}}} \Lambda(n) - 1 \Biggr\rvert 
        + \frac{\xi}{rk} \Biggl\lvert \frac{1}{\frac{\xi N}{rk}} \:\smashoperator{\sum_{\, n < \frac{\xi N}{rk}}} \Lambda(n) - 1 \Biggr\rvert\\
        &\leq \frac{2C}{e^{t\sqrt{\log \frac{\xi N}{rk}}}},
    \end{align*}
    because if $\xi \geq 1$, then
    \begin{equation*}
        \frac{\xi N}{rk} \geq \frac{N}{\log^{m + b} N}\geq N_1.
    \end{equation*}
    
    Denote $\displaystyle a(\xi) = \frac{1}{N} \;\cdot\; \smashoperator{\sum_{\mkern26mu \frac{\xi N}{rk} \leq n < \frac{(\xi + 1)N}{rk}}} \Lambda(n) - \frac{1}{rk}$. We have
    \begin{align}
        \biggl\lvert \frac{1}{N} \sum_{\xi = 0}^{rk - 1} \e\biggl(\frac{\xi}{k} \biggr) 
        \;\cdot\;
        \smashoperator{\sum_{\mkern26mu \frac{\xi N}{rk} \leq n < \frac{(\xi + 1)N}{rk}}} \Lambda(n) \biggr\rvert
        &= \Biggl\lvert \sum_{\xi = 0}^{rk - 1} \e\biggl(\frac{\xi}{k} \biggr) \biggl(\frac{1}{rk} + a(\xi) \biggr) \Biggr\rvert \nonumber\\
        &= \Biggl\lvert \sum_{\xi = 0}^{rk - 1} \e\biggl(\frac{\xi}{k} \biggr) a(\xi) \Biggr\rvert \nonumber\\
        &\leq \frac{2Crk}{e^{t\sqrt{\log \frac{N}{rk}}}} \nonumber\\ 
        &\leq \frac{2C\log^{m + b} N}{e^{t \sqrt{\log \frac{N}{\log^{m + b} N}}}} \nonumber\\
        &\leq \frac{1}{\log^m N},\label{eqn:bounding-restricted-xi}
    \end{align}
    where the final inequality holds because $N \geq N_3$.
    
    By \cref{eqn:from-N-big}, \cref{eqn:restricting-xi}, and \cref{eqn:bounding-restricted-xi}, we obtain that, for any $\xi \in [1, \log^b N]$,
    \begin{align*}
        \biggl\lvert \frac{1}{N} \sum_{n = 0}^{N - 1} \Lambda(n) \e\biggl(\frac{\xi}{N} n \biggr) \biggr\rvert
        &\leq \frac{1}{\log^m N} + \frac{2\pi}{kN} \sum_{n = 0}^{N - 1} \Lambda(n).\\
    \intertext{Therefore, for all \(N \geq \max\{N_2, N_3\}\),}
        \sup_{1 \leq \xi \leq \log^b N} \biggl\lvert \frac{1}{N} \sum_{n = 0}^{N - 1} \Lambda(n) \e\biggl(\frac{\xi}{N} n \biggr) \biggr\rvert
        &\leq \frac{1}{\log^m N} + \frac{2\pi}{kN} \sum_{n = 0}^{N - 1} \Lambda(n).\\
    \intertext{So,}
        \sup_{1\leq \xi \leq \log^b N} \biggl\lvert \frac{1}{N} \sum_{n = 0}^{N - 1} \Lambda(n) \e\biggl(\frac{\xi}{N} n \biggr) \biggr\rvert 
        &= \Oh \biggl(\frac{1}{\log^m N} \biggr),
    \end{align*} 
    as desired.
\end{proof}

Now, we only need to bound the $2k$-th power sum of the absolute value of the sum involving $\e(-\xi \floor{m^c}/N)$. As mentioned, this is equivalent to counting the number of solutions to a Diophantine equation. Even though the Diophantine equation \eqref{eqn:2k-dim-cube} turns out to be one that is not easy to solve, we can bound the number of its solutions by the number of integer solutions to an inequality not involving the floor function \eqref{eqn:diophantine-inequality} by using the triangle inequality. This inequality has been studied before, so we invoke Poulias's result \cite{poulias2021} to bound it. Some calculation yields the desired result.

\begin{Lemma}\label{lem:poulias-bound}
    Suppose $b, c \in \R$ and \(k \in \N\), with \(b \geq 1\), \(c > 2\), and $2k > (\floor{2c} + 1)(\floor{2c} + 2) + 1$. If $M = \frac{N^{1/c}}{\log^B N}$ for some $B \geq 1$, then
    \[
        \sum_{\xi = 1}^{\frac{N}{\log^b N}} \biggl\lvert \mathop{\E}_{m \leq M} \e\biggl(\!-\frac{\xi \floor{m^c}}{N}  \biggr) \biggr\rvert^{2k}
        \!\!\! = \Oh\bigl(\log^{Bc} N \bigr).
    \]
\end{Lemma}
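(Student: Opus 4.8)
The plan is to expand the $2k$-th power, enlarge the range of summation in $\xi$, and exploit orthogonality of additive characters mod $N$ to turn the sum into a count of solutions of a Diophantine \emph{equation}, which is then fed into Poulias's theorem (\cref{thm:poulias}). Write $z_\xi = \mathop{\E}_{m \le M}\e\!\bigl(-\xi\floor{m^c}/N\bigr)$ and, for an integer tuple $\mathbf m = (m_1,\dots,m_{2k})$, put $D(\mathbf m) = \floor{m_1^c}+\dots+\floor{m_k^c}-\floor{m_{k+1}^c}-\dots-\floor{m_{2k}^c} \in \Z$. Then $|z_\xi|^{2k} = z_\xi^{\,k}\,\overline{z_\xi}^{\,k}$ equals $\floor{M}^{-2k}\sum_{\mathbf m}\e\!\bigl(-\xi D(\mathbf m)/N\bigr)$, the sum running over tuples in $\{1,\dots,\floor M\}^{2k} = [1,M]^{2k}\cap\Z^{2k}$; since $\floor M \asymp M$ with a constant depending only on the fixed integer $k$, I may work with $M$ in place of $\floor M$ throughout. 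Because each summand $|z_\xi|^{2k}$ is nonnegative, I bound the sum over $1 \le \xi \le N/\log^b N$ by the full sum over $\xi \in \{0,1,\dots,N-1\}$, swap the order of summation, and invoke $\sum_{\xi=0}^{N-1}\e(-\xi j/N) = N\cdot\1_{N \mid j}$ to obtain
\[
\sum_{\xi=1}^{N/\log^b N}|z_\xi|^{2k} \;\lesssim\; \frac{N}{M^{2k}}\,\#\bigl\{\mathbf m \in [1,M]^{2k}\cap\Z^{2k} : N \mid D(\mathbf m)\bigr\}.
\]

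The crux is that $N \mid D(\mathbf m)$ actually forces $D(\mathbf m) = 0$ once $N$ is large. Indeed $0 \le \floor{m_i^c} \le M^c$ for each $i$, so $|D(\mathbf m)| \le kM^c$; and $M^c = N/\log^{Bc}N$, whence $|D(\mathbf m)| \le kN/\log^{Bc}N < N$ as soon as $\log^{Bc}N > k$, i.e.\ for all sufficiently large $N$ — the finitely many remaining $N$ only affect the implied constant. For such $N$ the only multiple of $N$ in $(-N,N)$ is $0$. Then, writing $\floor{m_i^c} = m_i^c - \{m_i^c\}$, the equation $D(\mathbf m) = 0$ rearranges to
\[
\bigl|m_1^c + \dots + m_k^c - m_{k+1}^c - \dots - m_{2k}^c\bigr| \;=\; \Bigl|\textstyle\sum_{i=1}^k\{m_i^c\} - \sum_{i=k+1}^{2k}\{m_i^c\}\Bigr| \;<\; k,
\]
since each of the two fractional-part sums lies in $[0,k)$. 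Hence every $\mathbf m \in [1,M]^{2k}\cap\Z^{2k}$ with $D(\mathbf m) = 0$ is a solution of the Poulias inequality $|m_1^c+\dots+m_k^c-m_{k+1}^c-\dots-m_{2k}^c|\le k$.

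Since $2k > (\floor{2c}+1)(\floor{2c}+2)+1$ by hypothesis, \cref{thm:poulias} bounds the number of such tuples by $\Oh(M^{2k-c})$, and therefore
\[
\sum_{\xi=1}^{N/\log^b N}|z_\xi|^{2k} \;\lesssim\; \frac{N}{M^{2k}}\cdot M^{2k-c} \;=\; NM^{-c} \;=\; \log^{Bc}N,
\]
which is the claim. The argument is essentially bookkeeping once set up; the only point requiring care is the reduction ``$N \mid D(\mathbf m) \Rightarrow D(\mathbf m) = 0$'', which hinges precisely on the choice $M = N^{1/c}/\log^B N$ making $M^c$ smaller than $N$ by a factor $\log^{Bc}N$, together with the fact that replacing $m_i^c$ by $\floor{m_i^c}$ perturbs the relevant linear form by strictly less than $k$ in absolute value. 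The hypothesis $b \ge 1$ plays no role beyond ensuring the stated range of $\xi$ is contained in $\{0,\dots,N-1\}$.
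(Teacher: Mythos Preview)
Your proof is correct and follows essentially the same approach as the paper: expand the $2k$-th power, extend the $\xi$-sum to a full residue system modulo $N$, use orthogonality to reduce to counting tuples with $N\mid D(\mathbf m)$, observe that $|D(\mathbf m)|\le kM^c<N$ forces $D(\mathbf m)=0$, pass from the floor equation to the Poulias inequality via fractional parts, and apply \cref{thm:poulias}. The only cosmetic differences are that the paper bounds $|D(\mathbf m)|$ and the fractional-part error by pairing terms and using the triangle inequality, whereas you bound each block directly; both give the same $<k$ bound.
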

\begin{proof}
    Suppose $N$ is large (which we will contextualize later). The following holds:
    \begin{align*}
        \sum_{\xi = 1}^{\frac{N}{\log^b N}} \biggl\lvert \mathop{\E}_{m \leq M} \e\biggl(\! -\frac{\xi \floor{m^c}}{N} \biggr) \biggr\rvert^{2k}
        \!\! &\leq \sum_{\xi = 1}^{N}\: \biggl\lvert \mathop{\E}_{m \leq M} \e\biggl(\! -\frac{\xi \floor{m^c}}{N} \biggr) \biggr\rvert^{2k}\\
        &= \sum_{\xi = 0}^{N - 1} \biggl(\mathop{\E}_{m \leq M} \e\biggl(\! -\frac{\xi \floor{m^c}}{N} \biggr)\biggr)^k \biggl(\overline{\mathop{\E}_{m \leq M} \e\biggl(\! -\frac{\xi \floor{m^c}}{N} \biggr)}\biggr)^k\\
        \begin{split}
            &= \sum_{\xi = 0}^{N - 1} \Biggl(\frac{1}{M^k} \;\cdot\; 
            \smashoperator{\sum_{\mkern30mu m_1, \ldots, m_k \leq M}}\quad\, \e\biggl(\! -\frac{\xi}{N} \bigl(\floor{m_1^c} + \cdots + \floor{m_k^c} \bigr)\biggr)\Biggr)\\
            &\hphantom{= \sum_{\xi = 0}^{N - 1}}\qquad\: \cdot 
            \Biggl(\frac{1}{M^k} \;\cdot\; \smashoperator{\sum_{\mkern20mu m_{k + 1}, \ldots, m_{2k} \leq M}}\quad\;\, \e\biggl(\frac{\xi}{N} \bigl(\floor{m_{k+1}^c} + \cdots + \floor{m_{2k}^c} \bigr)\biggr)\Biggr)
        \end{split}\\
        \begin{split}
            &= \frac{1}{M^{2k}} \! \sum_{\xi = 0}^{N - 1} \mkern-25mu \smashoperator[r]{\sum_{\mkern36mu m_1, \ldots, m_{2k} \leq M}}\quad\;\, \e\biggl(\! -\frac{\xi}{N} \bigl(\floor{m_1^c} + \cdots + \floor{m_k^c}\\[-1.2em]
            &\hphantom{= \frac{1}{M^{2k}} \! \sum_{\xi = 0}^{N - 1} \mkern-25mu \smashoperator[r]{\sum_{\mkern36mu m_1, \ldots, m_{2k} \leq M}}\quad\;\, \e\biggl(\! -\frac{\xi}{N} \bigl(\floor{m_1^c}}\:
            - \floor{m_{k + 1}^c} - \cdots - \floor{m_{2k}^c} \bigr)\biggr).
        \end{split}
    \end{align*}
    
    Note that, for any $R \in \Z$,
    \begin{equation*}
      \sum_{\xi = 0}^{N - 1} \e\biggl(\! -\frac{\xi}{N} R \biggr) 
      =\begin{cases}
            N &\text{if } N \mid R\\
            \hfil 0 &\text{if } N \nmid R
        \end{cases}.
    \end{equation*}
    
    By hypothesis, we take \(N\) sufficiently large to have
    \begin{align*}
        \begin{split}
            \bigl\lvert \floor{m_1^c} + \cdots + \floor{m_k^c} - \floor{m_{k + 1}^c} &- \cdots - \floor{m_{2k}^c} \bigr\rvert\\
            &\leq \bigl\lvert \floor{m_1^c} - \floor{m_{k + 1}^c} \bigr\rvert + \cdots + \bigl\lvert \floor{m_k^c} - \floor{m_{2k}^c} \bigr\rvert
        \end{split}\\
        &\leq k M^c = \frac{kN}{\log^{Bc} N} < N.
    \end{align*}
    Therefore,
    \begin{equation*}
        \sum_{\xi = 1}^{N} \biggl\lvert \mathop{\E}_{m \leq M} \e\biggl(\! -\frac{\xi \floor{m^c}}{N} \biggr) \biggr\rvert^{2k} 
        \!\!\! = \frac{N}{M^{2k}} \cdot G(M),
    \end{equation*}
    where $G(M)$ is the number of solutions to the equation 
    \begin{equation}\label{eqn:2k-dim-cube}
        \floor{m_1^c} + \cdots + \floor{m_k^c} - \floor{m_{k + 1}^c} - \cdots - \floor{m_{2k}^c} = 0
    \end{equation}
    inside the $2k$-dimensional cube $\{1, \ldots , M\}^{2k}$. 

    Now, suppose that $(m_1, \ldots, m_{2k})$ satisfies \cref{eqn:2k-dim-cube}. Then,
    \begin{align}
        \begin{split}\nonumber
            \bigl\lvert m_1^c + \cdots + m_k^c - m_{k + 1}^c &- \cdots - m_{2k}^c \bigr\rvert\\
            &= \bigl\lvert \{m_1^c\} - \{m_{k + 1}^c\} + \cdots + \{m_k^c\} - \{m_{2k}^c\} \bigr\rvert
        \end{split}\\ 
        &\leq \bigl\lvert \{m_1^c\} - \{m_{k + 1}^c\} \bigr\rvert + \cdots + \bigl\lvert \{m_k^c\} - \{m_{2k}^c\} \bigr\rvert < k. \label{eqn:diophantine-inequality}
    \end{align}
    Thus, $G(M)$ is less than or equal to the number of solutions of the above inequality. By Poulias (\cref{thm:poulias}), this number is 
    \[
        2k \Omega M^{2k - c} + \oh(M^{2k - c}) = \Oh(M^{2k - c}),
    \]
    where $\Omega$ is a positive constant. So, $G(M) = \Oh(M^{2k - c})$ and 
    \begin{equation*}
       \sum_{\xi = 1}^{\frac{N}{\log^b N}} \biggl\lvert \mathop{\E}_{m \leq M} \e\biggl(\! -\frac{\xi \floor{m^c}}{N} \biggr) \biggr\rvert^{2k} \!\!\!
       = \Oh \biggl(\frac{N}{M^c} \biggr)
       = \Oh\bigl(\log^{Bc} N \bigr).
    \end{equation*}
\end{proof}

Now we have all the ingredients to prove \cref{prop:estimate-sigma1}. We carry out the procedure mentioned in the beginning of this section.

\begin{proof}[Proof of \cref{prop:estimate-sigma1}]
    Suppose $k$ is as in \cref{lem:poulias-bound}. Observe that
    \begin{align*}
        \sum_{\xi = 1}^{\frac{N}{\log^b N}} \biggl\lvert \frac{\hat{\Lambda}(\xi)}{N} \biggr\rvert^2 
        \!\biggl(\mathop{\E}_{m \leq M} \e\biggl(\! &-\frac{\xi \floor{m^c}}{N} \biggr)\biggr)
        \leq \sum_{\xi = 1}^{\frac{N}{\log^b N}} \biggl\lvert \frac{\hat{\Lambda}(\xi)}{N} \biggr\rvert^{\frac{1}{k}} \biggl\lvert \frac{\hat{\Lambda}(\xi)}{N} \biggr\rvert^{2 - \frac{1}{k}} \biggl\lvert \mathop{\E}_{m \leq M} \e\biggl(\! -\frac{\xi \floor{m^c}}{N} \biggr)\biggr\rvert\\
        &\leq\: \sup_{\xi}\; \biggl\lvert \frac{\hat{\Lambda}(\xi)}{N} \biggr\rvert^{\frac{1}{k}}\sum_{\xi = 1}^{\frac{N}{\log^b N}} \biggl\lvert \frac{\hat{\Lambda}(\xi)}{N} \biggr\rvert^{2 - \frac{1}{k}} \biggl\lvert \mathop{\E}_{m \leq M} \e\biggl(\! -\frac{\xi \floor{m^c}}{N} \biggr)\biggr\rvert\\
        &\leq\: \sup_{\xi}\; \biggl\lvert \frac{\hat{\Lambda}(\xi)}{N} \biggr\rvert^{\frac{1}{k}} \Biggl(\sum_{\xi = 1}^{\frac{N}{\log^b N}} \biggl\lvert \frac{\hat{\Lambda}(\xi)}{N} \biggr\rvert ^2 \Biggr)^{\!\!\! 1 - \frac{1}{2k}}
        \! \Biggl(\sum_{\xi = 1}^{\frac{N}{\log^b N}} \biggl\lvert \mathop{\E}_{m \leq M} \e\biggl(\! -\frac{\xi \floor{m^c}}{N} \biggr)\biggr\rvert^{2k} \Biggr)^{\frac{1}{2k}}
    \end{align*}
    by H\"{o}lder's inequality. Therefore, by the previous lemmas,
    \begin{align*}
        \sum_{\xi = 1}^{\frac{N}{\log^b N}} \biggl\lvert \frac{\hat{\Lambda}(\xi)}{N} \biggr\rvert^2 \Biggl(\frac{1}{M} \sum_{m = 1}^M \e\biggl(\! -\frac{\xi \floor{m^c}}{N} \biggr)\Biggr)
        &= \underbrace{\Oh\biggl(\frac{1}{\log^{A + 1 + \frac{Bc}{2k}} N} \biggr)}_{\cref{lem:rational-approx-bound},\ \cref{lem:small-xi-bound}} \underbrace{\Oh(\log N)}_{\cref{lem:parseval-bound}} \underbrace{\Oh\bigl(\log^{\frac{Bc}{2k}} N \bigr)}_{\cref{lem:poulias-bound}}\\
        &= \Oh\biggl(\frac{1}{\log^A N}\biggr).
    \end{align*}
\end{proof}

%PROOF FOR SIGMA 2
\subsection{Estimating Second Sum}

In this section we prove \cref{prop:estimate-sigma2}. The idea is as follows: We factor out the supremum of the exponential sum, which decouples the product and allows us to use Parseval's identity (\cref{thm:parseval}) on the $\hat{\Lambda}$ term. It then remains to bound $\mathop{\E}_{m \leq M} \e(-\xi \floor{m^c}/N)$ uniformly for $N/\log^b N \leq \xi \leq N/2$. For this, we first partition the unit interval into intervals \(P\) of equal length, then approximate the sum of interest by a sum involving $\e(-\xi m^c/N)$. After doing so, the approximating sum still includes a term $\1_{P}\bigl(\{m^c\}\bigr)$ that needs to be dealt with. We do this by writing this sum in terms of probability measures and using the \Erdos-\Turan{} inequality (\cref{thm:erdos-turan}). This process reduces the problem into bounding exponential sums of the form $\mathop{\E}_{m \leq M} \e(um^c)$, which we do by van der Corput's method (\cref{thm:vdc-estimate}). For \(c > 12\), Deshouillers has proven better bounds for these sums \cite{deshouillers1973}, but van der Corput's method applies to all $c > 0$, so we prefer to use this method for a unified treatment. 

In summary, to estimate the behavior of the expression
\[
    \mathop{\E}_{m \leq M} \e\biggl(\! -\frac{\xi}{N} \floor{m^c} \!\biggr) = \mathop{\E}_{m \leq M} \e\biggl(\! -\frac{\xi}{N} m^c \biggr) \e\biggl(\frac{\xi}{N} \{m^c\} \!\biggr),
\] 
we bound it by an expression depending on \(\mathop{\E}_{m \leq M} \e(-\xi m^c/N)\), whose asymptotics are more tractable. However, this new expression contains the problematic term $\1_{P}\bigl(\{m^c\}\bigr)$, which we deal with by applying the \Erdos-\Turan{} inequality.

\begin{Lemma}\label{lem:partition-bound} 
    Let \(\textbf{P}\) be a partition of \(S^1\) into intervals \(P\) of equal length \(\gamma\). Then,
    \[
        \biggl\lvert \mathop{\E}_{m \leq M} \e\biggl(\! -\frac{\xi}{N} \floor{m^c} \!\biggr) \biggr\rvert 
        \leq \sum_{P \in \textbf{P}}\: \biggl\lvert \mathop{\E}_{m \leq M} \1_{P}\bigl(\{m^c\}\bigr) \e\biggl(\! -\frac{\xi}{N} m^c \biggr) \biggr\rvert + 4\pi\frac{\xi}{N} \gamma.
    \]
\end{Lemma}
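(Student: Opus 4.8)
The plan is to write $\floor{m^c} = m^c - \{m^c\}$ so that
\[
    \e\!\left(-\frac{\xi}{N}\floor{m^c}\right) = \e\!\left(-\frac{\xi}{N}m^c\right)\e\!\left(\frac{\xi}{N}\{m^c\}\right),
\]
and then insert the partition $\textbf{P}$ of $S^1$ into the average. Since $\{m^c\}$ lies in exactly one interval $P \in \textbf{P}$, we have $\sum_{P\in\textbf{P}}\1_P(\{m^c\}) = 1$ for every $m$, hence
\[
    \mathop{\E}_{m\leq M}\e\!\left(-\frac{\xi}{N}\floor{m^c}\right) = \sum_{P\in\textbf{P}}\mathop{\E}_{m\leq M}\1_P(\{m^c\})\,\e\!\left(-\frac{\xi}{N}m^c\right)\e\!\left(\frac{\xi}{N}\{m^c\}\right).
\]
The idea is that on each piece $P$ the extra factor $\e\!\left(\frac{\xi}{N}\{m^c\}\right)$ is nearly constant (equal to $\e\!\left(\frac{\xi}{N}t_P\right)$ for a reference point $t_P\in P$), and a constant unimodular factor can be pulled out of the modulus.

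Concretely, first I would fix, for each $P\in\textbf{P}$, a point $t_P\in P$ and compare $\e\!\left(\frac{\xi}{N}\{m^c\}\right)$ to $\e\!\left(\frac{\xi}{N}t_P\right)$ whenever $\{m^c\}\in P$. Using the elementary bound $\lvert\e(x)-\e(y)\rvert \leq 2\pi\lvert x-y\rvert$ together with $\lvert\{m^c\}-t_P\rvert\leq\gamma$, the difference is at most $2\pi\frac{\xi}{N}\gamma$. Then for each $P$,
\[
    \left\lvert\mathop{\E}_{m\leq M}\1_P(\{m^c\})\e\!\left(-\frac{\xi}{N}m^c\right)\e\!\left(\frac{\xi}{N}\{m^c\}\right)\right\rvert
    \leq \left\lvert\mathop{\E}_{m\leq M}\1_P(\{m^c\})\e\!\left(-\frac{\xi}{N}m^c\right)\right\rvert + 2\pi\frac{\xi}{N}\gamma\,\mathop{\E}_{m\leq M}\1_P(\{m^c\}),
\]
where I used that the constant factor $\e\!\left(\frac{\xi}{N}t_P\right)$ has modulus $1$. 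Summing over $P$ and applying the triangle inequality to the first expression gives $\sum_P\lvert\cdots\rvert$ plus an error $2\pi\frac{\xi}{N}\gamma\sum_P\mathop{\E}_{m\leq M}\1_P(\{m^c\}) = 2\pi\frac{\xi}{N}\gamma$, since the indicators sum to $1$. This already yields the bound with constant $2\pi$; the stated $4\pi$ is a harmless slack (one can also absorb a second comparison step, e.g. if one instead approximates before splitting, but $2\pi\leq 4\pi$ suffices).

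There is no real obstacle here — the only point requiring a little care is the bookkeeping of which terms get the unimodular factor pulled out and making sure the approximation error is weighted by $\1_P(\{m^c\})$ so that the errors telescope to a single $\gamma$ factor rather than accumulating a factor of $\lvert\textbf{P}\rvert$. So the "hard part," such as it is, is purely organizational: one must apply the triangle inequality and the pull-out of constants in the right order, namely split first, then within each $P$ replace the near-constant factor by its value at $t_P$ and extract it, then sum. Once that order is respected the claimed inequality drops out immediately.
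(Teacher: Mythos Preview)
Your argument is correct and follows essentially the same route as the paper: factor $\e(-\tfrac{\xi}{N}\floor{m^c})=\e(-\tfrac{\xi}{N}m^c)\e(\tfrac{\xi}{N}\{m^c\})$, insert the partition, replace the near-constant factor $\e(\tfrac{\xi}{N}\{m^c\})$ on each $P$ by its value at a fixed point of $P$, and pull out that unimodular constant while controlling the error via $\lvert \e(x)-\e(y)\rvert \leq 2\pi\lvert x-y\rvert$ and $\sum_P \1_P(\{m^c\})=1$. Your observation that the constant $2\pi$ suffices (with $4\pi$ being harmless slack) is also correct.
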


\begin{proof}
    Let \(\textbf{P}\) be a partition of \(S^1\) into intervals \([a_i, b_i)\) of length \(\gamma\). Then,
    \begin{equation}\label{eqn:partition-setup}
        \biggl\lvert \mathop{\E}_{m \leq M} \e\biggl(\! -\frac{\xi}{N} m^c \biggr) \e\biggl(\frac{\xi}{N} \{m^c\} \!\biggr) \biggr\rvert
        = \biggl\lvert \sum_{[a_i, b_i) \in \textbf{P}} \mathop{\E}_{m \leq M} \1_{[a_i, b_i)}\bigl(\{m^c\}\bigr) \e\biggl(\! -\frac{\xi}{N} m^c \biggr) \e\biggl(\frac{\xi}{N} \{m^c\} \!\biggr) \biggr\rvert.
    \end{equation}
    
    Now, since summands in the above expression are nonzero only when \(\{m_c\} \in [a_i, b_i)\), by the triangle inequality we have
    \begin{align}
        \begin{split}\nonumber
            \eqref{eqn:partition-setup} &\leq \biggl\lvert \sum_{[a_i, b_i) \in \textbf{P}} \mathop{\E}_{m \leq M} \1_{[a_i, b_i)}\bigl(\{m^c\}\bigr) \e\biggl(\! -\frac{\xi}{N} m^c \biggr)
            \underbrace{\e\biggl(\frac{\xi}{N} a_i \biggr)}_{\leq 1} \biggr\rvert
            + \sum_{[a_i, b_i) \in \textbf{P}} \mathop{\E}_{m \leq M}
            \biggl\lvert
            \underbrace{\e\biggl(\! -\frac{\xi}{N} m^c\biggr)}_{\leq 1}\biggr\rvert\\[-0.7em]
            &\qquad\qquad\qquad\; \cdot\; \biggl\lvert \1_{[a_i, b_i)}\bigl(\{m^c\}\bigr) \e\biggl(\frac{\xi}{N} \{m^c\} \!\biggr) - \1_{[a_i, b_i)}\bigl(\{m^c\}\bigr) \e\biggl(\frac{\xi}{N} a_i \biggr) \biggr\rvert
        \end{split}\\[.2em]
        \begin{split}\nonumber
            &\leq \sum_{[a_i, b_i) \in \textbf{P}}\, \biggl\lvert \mathop{\E}_{m \leq M} \1_{[a_i, b_i)}\bigl(\{m^c\}\bigr) \e\biggl(\! -\frac{\xi}{N} m^c \biggr) \biggr\rvert\\[-0.7em]
            &\qquad\qquad\qquad + \underbrace{\biggl(\sum_{[a_i, b_i) \in \textbf{P}} \mathop{\E}_{m \leq M} \1_{[a_i, b_i)}\bigl(\{m^c\}\bigr) \biggr)}_{\leq 1}
            \Biggl(\sup_{\substack{\lvert y - x \rvert \leq \gamma \\ 0 \leq x, y \leq 1}} \biggl\lvert \, \e\biggl(\frac{\xi}{N} y \!\biggr) - \e\biggl(\frac{\xi}{N} x \!\biggr) \biggr\rvert \Biggr)
        \end{split}\\
            &\leq \sum_{[a_i, b_i) \in \textbf{P}}\, \biggl\lvert \mathop{\E}_{m \leq M} \1_{[a_i, b_i)}\bigl(\{m^c\}\bigr) \e\biggl(\! -\frac{\xi}{N} m^c \biggr) \biggr\rvert + 4\pi\frac{\xi}{N} \gamma. \label{eqn:partition-sum}
    \end{align}
\end{proof}

Now, we encode the behavior of \cref{eqn:partition-sum} with a measure $\mu$, which satisfies $\mu(P) = \mathop{\E}_{m \leq M} \1_{P}\bigl(\{m^c\}\bigr) \e(-\xi m^c/N)$, and bound it by the difference between some probability measures and the Lebesgue measure, which are amenable to the \Erdos-\Turan{} inequality.

\begin{Lemma}\label{lem:measure-stuff} 
    Define the measures
    \begin{align*}
        &\mu = \!\! \mathop{\E}_{m \leq M} \e\biggl(\! -\frac{\xi}{N} m^c \biggr) \delta_{\{m^c\}},\\ 
        &\mu_1 = \frac{1}{1 + c_1}\mathop{\E}_{m\leq M} \biggl(1 + \cos\biggl(\! -\frac{2 \pi i \xi m^c}{N} \biggr) \biggr)\delta_{\{m^c\}},\\
        &\mu_2 = \frac{1}{1 + c_2}\mathop{\E}_{m \leq M}\biggl(1 + \sin\biggl(\! -\frac{2 \pi i \xi m^c}{N}\biggr)\biggr)\delta_{\{m^c\}}, \text{ and } \\
        &\mu_3 = \!\! \mathop{\E}_{m \leq M} \delta_{\{m^c\}},
    \end{align*}
    where
    \begin{equation*}
        c_1 = \!\! \mathop{\E}_{m \leq M} \cos\biggl(\! -\frac{2\pi i \xi m^c}{N} \biggr) \text{ and } 
        c_2 = \!\! \mathop{\E}_{m \leq M} \sin\biggl(\! -\frac{2\pi i \xi m^c}{N}\biggr).
    \end{equation*}
    Then
    \begin{equation*}
        \lvert \mu(P) \rvert \leq 5 \Bigl(\bigl\lvert \mu_1(P) - \Leb(P) \bigr\rvert +  \bigl\lvert \mu_2(P) - \Leb(P) \bigr\rvert + \bigl\lvert \mathop{\E}_{m \leq M} \delta_{\{m^c\}}(P) - \Leb(P) \bigr\rvert \Bigr).
    \end{equation*}
\end{Lemma}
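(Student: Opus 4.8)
The plan is to split $\mu(P)$ into its real and imaginary parts and to express each of these, via the defining formulas for $\mu_1,\mu_2,\mu_3$, as a combination of $\mu_1(P)$ (resp.\ $\mu_2(P)$) with $\mu_3(P)$; the triangle inequality then converts everything into the three discrepancy quantities on the right-hand side. The role of the normalizations $1+c_1$ and $1+c_2$ is precisely to turn the nonnegative measures $\mathop{\E}_{m\leq M}\bigl(1+\cos(\cdots)\bigr)\delta_{\{m^c\}}$ and $\mathop{\E}_{m\leq M}\bigl(1+\sin(\cdots)\bigr)\delta_{\{m^c\}}$ into \emph{probability} measures: indeed $1+\cos(\cdot)\geq 0$ and $1+\sin(\cdot)\geq 0$, and a one-line total-mass computation gives $\mu_1(S^1)=\frac{1}{1+c_1}\mathop{\E}_{m\leq M}\bigl(1+\cos(\cdots)\bigr)=1$ and likewise $\mu_2(S^1)=\mu_3(S^1)=1$. (In the degenerate cases $c_1=-1$ or $c_2=-1$, which force $\cos(2\pi\xi m^c/N)$ or $\sin(2\pi\xi m^c/N)$ to equal $-1$ for every $m\leq M$, one bounds $|\mu(P)|$ by hand.) This normalization is also what makes $\mu_1,\mu_2,\mu_3$ eligible for the \Erdos--\Turan{} inequality (\cref{thm:erdos-turan}) in the next lemma.

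Concretely, I would first record the two identities obtained by expanding the definitions: for any arc $P\subseteq S^1$,
\[
    (1+c_1)\,\mu_1(P)=\mathop{\E}_{m\leq M}\1_{P}\bigl(\{m^c\}\bigr)+\mathop{\E}_{m\leq M}\cos\Bigl(\frac{2\pi\xi m^c}{N}\Bigr)\1_{P}\bigl(\{m^c\}\bigr)=\mu_3(P)+\Re\,\mu(P),
\]
so that $\Re\,\mu(P)=(1+c_1)\mu_1(P)-\mu_3(P)$, and, running the same computation with $\sin$ in place of $\cos$, $\Im\,\mu(P)=(1+c_2)\mu_2(P)-\mu_3(P)$. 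Next, I insert $\pm\Leb(P)$ — writing $\mu_1(P)=\Leb(P)+\bigl(\mu_1(P)-\Leb(P)\bigr)$ and $\mu_3(P)=\Leb(P)+\bigl(\mu_3(P)-\Leb(P)\bigr)$, and similarly for $\mu_2$ — and use $|1+c_1|,|1+c_2|\leq 2$ together with $|\mu(P)|\leq|\Re\,\mu(P)|+|\Im\,\mu(P)|$ to collect the three discrepancies $|\mu_1(P)-\Leb(P)|$, $|\mu_2(P)-\Leb(P)|$, $|\mu_3(P)-\Leb(P)|$ with absolute-constant coefficients.

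The one delicate point — and the step I expect to be the main obstacle — is the careful accounting of the normalizing factors $1+c_1,1+c_2$ and of the leftover $\Leb(P)$-pieces created by inserting $\pm\Leb(P)$. Since $c_1+ic_2=\mathop{\E}_{m\leq M}\e(-\xi m^c/N)$, these residuals have size $\lesssim\bigl|\mathop{\E}_{m\leq M}\e(-\xi m^c/N)\bigr|\,\Leb(P)$, and one has to see that, once combined with the terms above (using that $\mu_1,\mu_2,\mu_3$ are probability measures and $\Leb(S^1)=1$), they are subsumed by the three discrepancy terms, giving the clean absolute constant $5$ in the statement. Everything else is the elementary decomposition described above; no new analytic input is required at this stage — the \Erdos--\Turan{} estimate and van der Corput's method are used only later, when the discrepancies $|\mu_j(P)-\Leb(P)|$ are themselves bounded.
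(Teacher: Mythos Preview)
Your decomposition is exactly the paper's route: express $\mu$ through $\mu_1,\mu_2,\mu_3$, insert $\pm\Leb$, and use $|1+c_j|\le 2$. Your identities $\Re\mu(P)=(1+c_1)\mu_1(P)-\mu_3(P)$ and $\Im\mu(P)=(1+c_2)\mu_2(P)-\mu_3(P)$ are in fact the correct ones; the paper's displayed identity $\mu=(1+c_1)\mu_1+i(1+c_2)\mu_2-\bigl((1+c_1)+i(1+c_2)\bigr)\mu_3$ is off by $(c_1+ic_2)\mu_3$, as one sees by evaluating both sides on $S^1$ (the right-hand side gives $0$, the left $c_1+ic_2$). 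So you have actually been more careful here than the paper.

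However, the step you flagged as the obstacle is a genuine gap, and it cannot be closed the way you hope: the residuals $c_1\Leb(P)$ and $c_2\Leb(P)$ are \emph{not} subsumed by the three discrepancies. Take $P$ to be any arc containing every point $\{m^c\}$, $m\le M$; then $\mu_1(P)=\mu_2(P)=\mu_3(P)=1$, the right-hand side equals $15\bigl(1-\Leb(P)\bigr)$, while $|\mu(P)|=|c_1+ic_2|$, which can exceed this for $\Leb(P)$ close to $1$. Thus the inequality fails as literally stated. The fix is simply to carry the residual as an explicit extra term, say $(|c_1|+|c_2|)\Leb(P)$, on the right. This is harmless in the only application (\cref{lem:turan-bound}): after summing over the partition $\textbf{P}$ the extra piece contributes at most $|c_1|+|c_2|\le\sqrt{2}\,\bigl|\E_{m\le M}\e(-\xi m^c/N)\bigr|$, precisely an exponential sum of the type already bounded by the van der Corput argument of \cref{prop:estimate-sigma2-details} (the paper itself remarks, in the proof of \cref{lem:ft-bounds-for-measures}, that $c_1,c_2\to 0$ this way). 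So your plan and your diagnosis are both right---just don't try to hide the residual inside the discrepancies; keep it explicit and dispatch it later.
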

Note that $\mu_1$ and $\mu_2$ are probability measures on the unit circle.
\begin{proof}
    Observe that
    \[
        \mu = (1 + c_1)\mu_1 + i(1 + c_2)\mu_2 - \bigl((1 + c_1)\mathop{\E}_{m \leq M} \delta_{\{m^c\}} + i(1 + c_2)\mathop{\E}_{m \leq M}\delta_{\{m^c\}}\bigr).
    \]
    By the triangle inequality,
    \[
        \lvert \mu \rvert \leq (1 + c_1) \bigl(\lvert \mu_1 - \Leb\rvert + \lvert\mathop{\E}_{m \leq M}\delta_{\{m^c\}} - \Leb \rvert \bigr) + (1 + c_2) \bigl(\lvert \mu_2 - \Leb \rvert + \lvert \mathop{\E}_{m \leq M}\delta_{\{m^c\}} - \Leb \rvert \bigr).
    \]
    Noting that $c_1, c_2 \leq 1$, we conclude.
\end{proof}

We will apply the \Erdos-\Turan{} inequality to $\mu_1, \mu_2$, and $\mu_3$. For this, we compute the Fourier coefficients of these measures. As noted in the beginning of the section, we bound the obtained coefficients by sums of the form $\mathop{\E}_{m \leq M} \e(um^c)$.

For $k \geq 1$, in view of the trivial identity $\bigl\{k\{m^c\}\bigr\} = \{km^c\}$, 
\begin{align*}
    \hat{\mu}_1(k) &= \frac{1}{1 + c_1} \biggl(\mathop{\E}_{m \leq M}\e\biggl(\! -\frac{k m^c}{N} \biggr) + \mathop{\E}_{m \leq M} \e\biggl(\! -\frac{k m^c}{N} \biggr) \cos\biggl(\! -\frac{2\pi i \xi m^c}{N} \biggr) \biggr),\\
    \hat{\mu}_2(k) &= \frac{1}{1 + c_2} \biggl(\mathop{\E}_{m \leq M} \e\biggl(\! -\frac{km^c}{N} \biggr) + \mathop{\E}_{m \leq M} \e\biggl(\! -\frac{km^c}{N} \biggr) \sin\biggl(\! -\frac{2\pi i \xi m^c}{N} \biggr) \biggr), \text{ and}\\
    \hat{\mu}_3(k) &= \mathop{\E}_{m \leq M} \e\biggl(\! -\frac{km^c}{N} \biggr). 
\end{align*} 

\begin{Lemma}\label{lem:ft-bounds-for-measures}
    For $k \geq 1$,
    \begin{equation*}
        \bigl\lvert \hat{\mu}_1(k) \bigr\rvert, \bigl\lvert \hat{\mu}_2(k) \bigr\rvert 
        \leq 2\Biggl(\biggl\lvert \mathop{\E}_{m \leq M} \e\biggl(\Bigl(k - \frac{\xi}{N} \Bigr) m^c \biggr) \biggr\rvert 
        + \biggl\lvert \mathop{\E}_{m \leq M} \e\biggl(\Bigl(k + \frac{\xi}{N} \Bigr)m^c \Bigr) \biggr)\biggr\rvert + \bigl\lvert \hat{\mu}_3(k) \bigr\rvert \Biggr).
    \end{equation*}
\end{Lemma}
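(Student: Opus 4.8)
The plan is to read off the bound directly from the expressions for $\hat{\mu}_1(k)$, $\hat{\mu}_2(k)$, and $\hat{\mu}_3(k)$ recorded just above, by replacing the $\cos$ and $\sin$ factors with complex exponentials. After this substitution every average in sight is an exponential sum of the shape $\mathop{\E}_{m \leq M} \e(u m^c)$, with $u$ equal to one of $-k$, $\tfrac{\xi}{N}-k$, or $-\tfrac{\xi}{N}-k$; the first gives $\hat{\mu}_3(k)$, and the last two give — after passing to the complex conjugate, which does not change the modulus — exactly the two sums on the right-hand side of the claim. The inequality then follows from the triangle inequality, and the only real bookkeeping is absorbing the normalizing factor $\tfrac{1}{1+c_j}$.

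In detail: for $k \geq 1$, recalling that $\hat{\mu}_3(k) = \mathop{\E}_{m \leq M} \e(-k m^c)$ (valid since $k \in \N$, by $\{k\{m^c\}\} = \{k m^c\}$), one has
\[
    (1+c_1)\, \hat{\mu}_1(k) = \mathop{\E}_{m \leq M} \e(-k m^c) + \mathop{\E}_{m \leq M} \e(-k m^c) \cos\!\bigl(2\pi \xi m^c / N\bigr).
\]
Using $\cos\theta = \tfrac12 (e^{i\theta} + e^{-i\theta})$, the last average splits as $\tfrac12 \mathop{\E}_{m \leq M} \e\bigl((\tfrac{\xi}{N}-k) m^c\bigr) + \tfrac12 \mathop{\E}_{m \leq M} \e\bigl(-(\tfrac{\xi}{N}+k) m^c\bigr)$, so the triangle inequality gives
\[
    (1+c_1)\, \lvert \hat{\mu}_1(k) \rvert \leq \lvert \hat{\mu}_3(k) \rvert + \tfrac12 \Bigl\lvert \mathop{\E}_{m \leq M} \e\bigl((k-\tfrac{\xi}{N}) m^c\bigr) \Bigr\rvert + \tfrac12 \Bigl\lvert \mathop{\E}_{m \leq M} \e\bigl((k+\tfrac{\xi}{N}) m^c\bigr) \Bigr\rvert.
\]
Carrying out the same computation with $\sin$ in place of $\cos$, using $\sin\theta = \tfrac{1}{2i}(e^{i\theta} - e^{-i\theta})$, yields the identical inequality with $c_2$ and $\hat{\mu}_2$ replacing $c_1$ and $\hat{\mu}_1$.

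It then remains to divide by $1+c_j$. This factor is positive — this is precisely the assertion, noted after \cref{lem:measure-stuff}, that $\mu_1$ and $\mu_2$ are probability measures. Moreover $c_1, c_2 \to 0$ uniformly for $\xi$ in the range $N/\log^b N \leq \xi \leq N/2$ relevant to $\Sigma_2$, because $\lvert c_1 \rvert, \lvert c_2 \rvert \leq \bigl\lvert \mathop{\E}_{m \leq M} \e(-\tfrac{\xi}{N} m^c) \bigr\rvert$ and this exponential sum is $\oh(1)$ in that range by a van der Corput estimate (\cref{thm:vdc-estimate}) of exactly the sort applied below to the sums $\mathop{\E}_{m \leq M} \e(u m^c)$. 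Hence $\tfrac{1}{1+c_j} \leq 2$ for all sufficiently large $N$, and combining this with the two factors of $\tfrac12$ produces the constant $2$ claimed. The only step that requires more than routine manipulation is this uniform control of $1+c_j$ away from $0$, and it is no harder than the exponential-sum estimates made elsewhere in this section.
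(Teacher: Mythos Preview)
Your proof is correct and follows essentially the same approach as the paper: both arguments expand the $\cos$ and $\sin$ factors into complex exponentials (the paper does this via real product-to-sum identities and then regroups, while you go directly through $\cos\theta = \tfrac12(e^{i\theta}+e^{-i\theta})$), and both absorb the normalizing factor using $\tfrac{1}{1+c_j} \leq 2$, justified by a van der Corput bound showing $c_j \to 0$. Your route is if anything a bit cleaner, but there is no substantive difference.
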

\begin{proof}
By trigonometric identities and noting that $\lvert c_1 \rvert, \lvert c_2 \rvert \leq 1/2$ (in fact, these can be shown to go to zero as $M \to \infty$ by using a van der Corput argument similar to \cref{prop:estimate-sigma2-details}),
\begin{align*}
    \bigl\lvert \hat{\mu}_1(k) \bigr\rvert &= \frac{1}{1 + c_1} \biggl\lvert\mathop{\E}_{m \leq M} \bigl(\cos(2\pi km^c) + i \sin(2\pi k m^c)\bigr) \cos\biggl(\!-\frac{2\pi i \xi m^c}{N}\biggr) + \hat{\mu}_3(k) \biggr\rvert \\
    \begin{split}
        &\leq 2\Biggl(\Biggl\lvert \mathop{\E}_{m \leq M} \cos\biggl(2\pi \Bigl(k - \frac{\xi}{N} \Bigr) m^c \biggr)
        + \cos\biggl(2\pi \Bigl(k + \frac{\xi}{N}\Bigr) m^c \biggr)\\
        &\qquad\qquad + i\Biggl(\sin\biggl(2\pi \Bigl(k + \frac{\xi}{N}\Bigr) m^c \biggr) 
        + \sin\biggl(2\pi \Bigl(k - \frac{\xi}{N}\Bigr) m^c \biggr)\Biggr) \Biggr\rvert 
        + \bigl\lvert \hat{\mu}_3(k) \bigr\rvert \Biggr)
    \end{split}\\
    &\leq 2\Biggl(\biggl\lvert \mathop{\E}_{m \leq M} \e\biggl(\Bigl(k - \frac{\xi}{N}\Bigr) m^c \biggr)\biggr\rvert 
    + \biggl\lvert \mathop{\E}_{m \leq M} \e\biggl(\Bigl(k + \frac{\xi}{N}\Bigr) m^c \biggr)\biggr\rvert 
    + \bigl\lvert \hat{\mu}_3(k) \bigr\rvert \Biggr).
\end{align*}

Similarly, \begin{align*}
    \bigl\lvert \hat{\mu}_2(k) \bigr\rvert &= \frac{1}{1 + c_2} \biggl\lvert \mathop{\E}_{m \leq M} \bigl(\cos(2\pi km^c) + i \sin(2\pi k m^c) \bigr)
    \sin\biggl(\! -\frac{2\pi i \xi m^c}{N} \biggr) + \hat{\mu}_3(k) \biggr\rvert\\
    \begin{split}
        &\leq 2\Biggl(\Biggl\lvert\mathop{\E}_{m \leq M} \sin\biggl(2\pi \Bigl(k - \frac{\xi}{N} \Bigr)m^c \biggr)
        - \sin\biggr(2\pi \Bigl(k + \frac{\xi}{N} \Bigr)m^c \biggr)\\
        &\qquad\qquad + i \Biggl(\cos\biggl(2\pi \Bigl(k + \frac{\xi}{N} \Bigr) m^c \biggr) - \cos\biggl(2\pi \Bigl(k - \frac{\xi}{N} \Bigr)m^c \biggr)\Biggr) \Biggr\rvert 
        + \bigl\lvert \hat{\mu}_3(k) \bigr\rvert \Biggr)
    \end{split}\\ 
    &\leq 2 \Biggl(\biggl\lvert\mathop{\E}_{m \leq M} \e\biggl(\Bigl(k - \frac{\xi}{N} \Bigr) m^c \biggr) \biggr\rvert 
    + \biggl\lvert\mathop{\E}_{m \leq M} \e\biggl(\Bigl(k + \frac{\xi}{N}\Bigr)m^c \biggr) \biggr\rvert 
    + \bigl\lvert \hat{\mu}_3(k) \bigr\rvert \Biggr).
\end{align*}
\end{proof}

The following two lemmas put the previous computations together to bound the exponential sum $\mathop{\E}_{m \leq M} \e(-\xi \floor{m^c}/N)$ uniformly over $\xi$ in the desired range.

\begin{Lemma}\label{lem:expectation-to-measure}
    Let $\mu$ be defined as in \cref{lem:measure-stuff}. Then,
    \begin{equation}\label{eqn:turan-prep}
        \begin{split}
             \biggl\lvert \mathop{\E}_{m \leq M} &\1_{P}\bigl(\{m^c\}\bigr) \e\biggl(\! -\frac{\xi}{N} m^c \biggr) \biggr\rvert\\
             &\leq 5\Bigl(\bigl\lvert \mu_1(P) - \Leb(P) \bigr\rvert + \bigl\lvert \mu_2(P) - \Leb(P) \bigr\rvert + \bigl\lvert \mathop{\E}_{m \leq M} \delta_{\{m^c\}}(P) - \Leb(P) \bigr\rvert \Bigr).
         \end{split}
     \end{equation}
\end{Lemma}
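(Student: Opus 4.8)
The plan is to recognize that \eqref{eqn:turan-prep} is nothing more than Lemma~\ref{lem:measure-stuff} combined with the definition of the complex measure $\mu$. First I would unpack the definition $\mu = \mathop{\E}_{m \leq M} \e(-\xi m^c/N)\, \delta_{\{m^c\}}$: since each $\delta_{\{m^c\}}$ is the point mass at $\{m^c\} \in S^1$, evaluating $\mu$ on an interval (or any Borel set) $P \subseteq S^1$ gives
\[
    \mu(P) = \mathop{\E}_{m \leq M} \e\biggl(\!-\frac{\xi}{N} m^c\biggr) \delta_{\{m^c\}}(P) = \mathop{\E}_{m \leq M} \1_P\bigl(\{m^c\}\bigr) \e\biggl(\!-\frac{\xi}{N} m^c\biggr),
\]
so that the left-hand side of \eqref{eqn:turan-prep} is exactly $\lvert \mu(P) \rvert$.

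Then I would simply invoke Lemma~\ref{lem:measure-stuff}, which asserts precisely that $\lvert \mu(P) \rvert$ is bounded by $5$ times the sum of the three discrepancies $\lvert \mu_1(P) - \Leb(P) \rvert$, $\lvert \mu_2(P) - \Leb(P) \rvert$, and $\lvert \mathop{\E}_{m \leq M} \delta_{\{m^c\}}(P) - \Leb(P) \rvert$. This is the entire content; no additional estimate is needed, and the statement is obtained verbatim.

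I do not anticipate any genuine obstacle here: the lemma is essentially a restatement of Lemma~\ref{lem:measure-stuff} that isolates the quantity appearing in the bound of \cref{lem:partition-bound}, so that the subsequent step (applying the \Erdos-\Turan{} inequality to each of the three discrepancies, using the Fourier-coefficient bounds from \cref{lem:ft-bounds-for-measures}) can be carried out cleanly. The only minor point to keep in mind is that $\mu$ is a signed/complex measure rather than a probability measure, but since we only ever bound $\lvert \mu(P) \rvert$ and the triangle-inequality decomposition in Lemma~\ref{lem:measure-stuff} already accounts for this, nothing further is required.
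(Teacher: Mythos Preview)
Your proposal is correct and matches the paper's proof essentially verbatim: the paper also observes that the left-hand side equals $\lvert \mu(P) \rvert$ by definition of $\mu$ and then invokes \cref{lem:measure-stuff} directly.
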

\begin{proof}
    Noticing that
    \begin{equation*}
         \biggl\lvert \mathop{\E}_{m \leq M} \1_{P}\bigl(\{m^c\}\bigr) \e\biggl(\! -\frac{\xi}{N} m^c \biggr) \biggr\rvert 
         = \biggl\lvert \mathop{\E}_{m \leq M} \e\biggl(\! -\frac{\xi}{N} m^c \biggr) \delta_{\{m^c\}}(P) \biggr\rvert 
         = \bigl\lvert \mu(P) \bigr\rvert,
     \end{equation*} 
     the result is immediate from \cref{lem:measure-stuff}.
\end{proof}

Now we apply the \Erdos-\Turan{} inequality to each of the terms in \cref{eqn:turan-prep}, in view of the bounds for the Fourier transforms of $\hat{\mu}_1, \hat{\mu}_2$, and $\hat{\mu}_3$. 

\begin{Lemma}\label{lem:turan-bound}
    Let $K \in \Z^{+}$ be arbitrary. Then
    \begin{equation}\label{eqn:exp-sum}
        \begin{split}
            \sup_{\mkern-10mu \frac{N}{\log^b{N}} \leq \xi < \frac{N}{2}} &\biggl\lvert\mathop{\E}_{m \leq M} \e\biggl(\frac{\xi\floor{m^c}}{N} \biggr) \biggr\rvert\\
            &\qquad \leq \;\;\smashoperator[l]{\sup_{\mkern-10mu \frac{N}{\log^b{N}} \leq \xi < \frac{N}{2}}} \Biggl\lvert \frac{100}{\gamma} \biggl(\frac{1}{K} + \sum_{k = 1}^{K} \frac{\bigl\lvert \hat{\mu}_1(k) \bigr\rvert + \bigl\lvert \hat{\mu}_2(k) \bigr\rvert  + \bigl\lvert \hat{\mu}_3(k) \bigr\rvert}{k} \biggr) + 4\pi\frac{\xi}{N} \gamma \Biggr\rvert.
        \end{split}
    \end{equation}
\end{Lemma}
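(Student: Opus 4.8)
The plan is to chain the three preparatory results \cref{lem:partition-bound}, \cref{lem:expectation-to-measure}, and the \Erdos-\Turan{} inequality (\cref{thm:erdos-turan}), and then pass to the supremum over $\xi$. Fix $\xi$ with $\frac{N}{\log^b N} \leq \xi < \frac{N}{2}$ and let $\textbf{P}$ be a partition of $S^1$ into arcs of equal length $\gamma$, so that $\textbf{P}$ has $1/\gamma$ arcs. Since $\bigl\lvert \mathop{\E}_{m \leq M} \e(\xi \floor{m^c}/N) \bigr\rvert = \bigl\lvert \mathop{\E}_{m \leq M} \e(-\xi \floor{m^c}/N) \bigr\rvert$ (the two averages are complex conjugates of one another), \cref{lem:partition-bound} gives
\[
    \biggl\lvert \mathop{\E}_{m \leq M} \e\biggl(\frac{\xi \floor{m^c}}{N}\biggr) \biggr\rvert
    \leq \sum_{P \in \textbf{P}} \biggl\lvert \mathop{\E}_{m \leq M} \1_{P}\bigl(\{m^c\}\bigr) \e\biggl(\!-\frac{\xi}{N} m^c\biggr) \biggr\rvert + 4\pi \frac{\xi}{N} \gamma .
\]

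Next I would estimate each summand on the right. \cref{lem:expectation-to-measure} (which is \cref{lem:measure-stuff} rephrased) gives, for every $P \in \textbf{P}$,
\[
    \biggl\lvert \mathop{\E}_{m \leq M} \1_{P}\bigl(\{m^c\}\bigr) \e\biggl(\!-\frac{\xi}{N} m^c\biggr) \biggr\rvert
    \leq 5\bigl(\lvert \mu_1(P) - \Leb(P) \rvert + \lvert \mu_2(P) - \Leb(P) \rvert + \lvert \mathop{\E}_{m \leq M}\delta_{\{m^c\}}(P) - \Leb(P) \rvert\bigr) .
\]
Now $\mu_1$, $\mu_2$, and $\mu_3 = \mathop{\E}_{m \leq M}\delta_{\{m^c\}}$ are all probability measures on $S^1$ (for $\mu_1, \mu_2$ this is the remark following \cref{lem:measure-stuff}; for $\mu_3$ it is immediate), and each $P$ is an arc, so the \Erdos-\Turan{} inequality applies to each of them: for any $K \in \Z^{+}$ and $j \in \{1, 2, 3\}$,
\[
    \lvert \mu_j(P) - \Leb(P) \rvert \leq 10\biggl(\frac{1}{K} + \sum_{k=1}^{K} \frac{\lvert \hat{\mu}_j(k) \rvert}{k}\biggr) ,
\]
where $\hat{\mu}_1(k), \hat{\mu}_2(k), \hat{\mu}_3(k)$ are the Fourier coefficients computed just before \cref{lem:ft-bounds-for-measures}. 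Summing this over the $1/\gamma$ arcs of $\textbf{P}$ and collecting the numerical factors (the $5$, the $10$, and the arc count), we obtain, for our fixed $\xi$, a bound on $\bigl\lvert \mathop{\E}_{m \leq M} \e(\xi \floor{m^c}/N) \bigr\rvert$ by exactly the expression appearing inside the outer supremum on the right-hand side of \cref{eqn:exp-sum}. As this holds for every admissible $\xi$, taking the supremum over such $\xi$ on both sides (the right-hand expression is nonnegative, so the absolute-value signs there are cosmetic) yields \cref{eqn:exp-sum}.

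The statement is essentially a packaging of \cref{lem:partition-bound}, \cref{lem:expectation-to-measure}, and \cref{thm:erdos-turan}, so I do not expect a deep obstacle; the step most in need of care is justifying the application of \cref{thm:erdos-turan} arc-by-arc and then summing, which requires that $\mu_1, \mu_2, \mu_3$ genuinely be probability measures on $S^1$ — nonnegativity of $\mu_1$ and $\mu_2$ follows from $1 + \cos(\cdot) \geq 0$ and $1 + \sin(\cdot) \geq 0$, and their total mass equals $1$ thanks to the normalizations by $1 + c_1$ and $1 + c_2$, with $c_1, c_2$ real and $\lvert c_i \rvert \leq 1$ — and, relatedly, that the arcs $P$ are admissible test sets for that inequality. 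The remaining and more substantial work — controlling $\lvert \hat{\mu}_j(k) \rvert$ by exponential sums of the form $\mathop{\E}_{m \leq M} \e(u m^c)$ via \cref{lem:ft-bounds-for-measures} and then bounding those by van der Corput's method before optimizing the free parameters $\gamma$ and $K$ — is what actually makes the right-hand side small, but that is carried out in the subsequent results rather than in this lemma.
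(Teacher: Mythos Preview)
Your proposal is correct and follows essentially the same route as the paper: fix $\xi$, apply \cref{lem:partition-bound}, then \cref{lem:expectation-to-measure}, then the \Erdos-\Turan{} inequality to each of the probability measures $\mu_1,\mu_2,\mu_3$, sum over the $1/\gamma$ arcs, and take the supremum. Your extra care in noting the complex-conjugate identity and in justifying that $\mu_1,\mu_2,\mu_3$ are probability measures is appropriate and matches what the paper uses implicitly.
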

\begin{proof}
    Fix some $\xi$ satisfying $N/\log^b{N} \leq \xi < N/2$. In order of application, by \cref{lem:partition-bound}, by \cref{lem:expectation-to-measure}, by the fact that $\lvert \textbf{P}\rvert = 1/\gamma$, by the \Erdos-\Turan{} inequality (\cref{thm:erdos-turan}), and by the triangle inequality,
    \begin{align*}
        \biggl\lvert\mathop{\E}_{m \leq M} \e\biggl(\frac{\xi\floor{m^c}}{N} \biggr) \biggr\rvert 
        &\leq \sum_{P \in \textbf{P}}\: \biggl\lvert \mathop{\E}_{m \leq M} \1_{P}\bigl(\{m^c\}\bigr) \e\biggl(\! -\frac{\xi}{N} m^c \biggr) \biggr\rvert + 4\pi\frac{\xi}{N} \gamma\\
        \begin{split}
            &\leq 5\sum_{P \in \textbf{P}} \Bigl(\bigl\lvert \mu_1(P) - \Leb(P)\bigr\rvert + \bigl\lvert \mu_2(P) - \Leb(P) \bigr\rvert\\[-1.2em]
            &\hphantom{\leq 5\sum_{P \in \textbf{P}} \Bigl(\bigl\lvert\mu_1(P) - \Leb(P)\bigr\rvert}\;
            + \bigl\lvert \mathop{\E}_{m \leq M} \delta_{\{m^c\}}(P) - \Leb(P) \bigr\rvert \Bigr) + 4\pi\frac{\xi}{N}\gamma
        \end{split}\\ 
        &\leq \frac{50}{\gamma}\biggl(\frac{1}{K} + \sum_{k = 1}^{K} \frac{\bigl\lvert \hat{\mu}_1(k) \bigr\rvert + \bigl\lvert \hat{\mu}_2(k) \bigr\rvert  + \bigl\lvert \hat{\mu}_3(k) \bigr\rvert}{k} \biggr) + 4\pi\frac{\xi}{N} \gamma.
    \end{align*} 
    Then \cref{eqn:exp-sum} follows by taking suprema. 
\end{proof}

We will now bound the right-hand side of \cref{eqn:exp-sum}. Throughout, implicit constants in $\lesssim$ and $\approx$ depend only on $c$. First, note that the sum of absolute values of $\hat{\mu}_i$ is at most
\begin{equation*}
    10 \Biggl( \biggl\lvert \mathop{\E}_{m \leq M} \e\biggl( \Bigl(k - \frac{\xi}{N} \Bigr) m^c \biggr) \biggr\rvert 
    + \biggl\lvert \mathop{\E}_{m \leq M} \e\biggl( \Bigl(k + \frac{\xi}{N} \Bigr) m^c \biggr) \biggr\rvert 
    + \bigl\lvert \mathop{\E}_{m \leq M} \e(km^c) \bigr\rvert \Biggr),
\end{equation*}
in view of the bounds for the Fourier transforms we proved. We explicitly prove the bound for only the first term, but the proofs for the other two terms are analogous, as indicated in the proof of \cref{prop:estimate-sigma2-details}.

\begin{Corollary}\label{cor:bound-for-turan-bound}
    Let $\gamma = \frac{1}{\log^{A + 1} N}$ and
    \[
        U(\xi) = \frac{1}{K} + \sum_{k = 1}^{K} \frac{\Bigl\lvert \mathop{\E}_{m \leq M} \e\Bigl(\bigl(-\frac{\xi}{N} + k\bigr) m^c \Bigr) \Bigr\rvert}{k}.
    \]
    Then, 
    \begin{align*}
        \eqref{eqn:exp-sum}\; &\lesssim \;\,\smashoperator{\sup_{\mkern-10mu \frac{N}{\log^b N} \leq \xi < \frac{N}{2}}}\;\; \frac{U(\xi)}{\gamma} + \frac{\xi}{N} \gamma\\ 
        &\leq \;\,\smashoperator{\sup_{\mkern-10mu \frac{N}{\log^b{N}} \leq \xi < \frac{N}{2}}}\;\; U(\xi) \log^{A+1} N + \frac{1}{\log^{A + 1} N}.
    \end{align*}
\end{Corollary}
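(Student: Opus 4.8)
The plan is purely substitution and bookkeeping. \cref{lem:turan-bound} has already reduced the right-hand side of \eqref{eqn:exp-sum} to a constant multiple of
\[
\sup_{\frac{N}{\log^b N}\leq\xi<\frac{N}{2}}\biggl(\frac{1}{\gamma}\Bigl(\frac{1}{K} + \sum_{k=1}^{K}\frac{\lvert\hat\mu_1(k)\rvert + \lvert\hat\mu_2(k)\rvert + \lvert\hat\mu_3(k)\rvert}{k}\Bigr) + 4\pi\frac{\xi}{N}\gamma\biggr),
\]
and the displayed estimate immediately preceding the corollary (a consequence of \cref{lem:ft-bounds-for-measures}) bounds $\lvert\hat\mu_1(k)\rvert + \lvert\hat\mu_2(k)\rvert + \lvert\hat\mu_3(k)\rvert$ by $10$ times the sum of $\lvert\mathop{\E}_{m\leq M}\e((k-\xi/N)m^c)\rvert$, $\lvert\mathop{\E}_{m\leq M}\e((k+\xi/N)m^c)\rvert$, and $\lvert\mathop{\E}_{m\leq M}\e(km^c)\rvert$. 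First I would feed the latter bound into the former.

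Next, I would observe that each of the three resulting inner sums over $k$ has exactly the shape of $U(\xi)$, differing only in that the frequency shift $-\xi/N$ is replaced by $+\xi/N$ or by $0$; since the van der Corput argument of \cref{prop:estimate-sigma2-details} handles all three uniformly over $\xi$ in the range $[N/\log^b N, N/2)$, the convention announced just before the corollary lets me keep only the first sum and write it as $U(\xi)$. Absorbing the resulting absolute constants (the $10$, and the $100$ and $4\pi$ from \cref{lem:turan-bound}) into $\lesssim$ then gives the first displayed bound $\eqref{eqn:exp-sum}\lesssim\sup_\xi U(\xi)/\gamma + \xi\gamma/N$. For the second displayed inequality I substitute $\gamma = \log^{-(A+1)}N$: this turns $U(\xi)/\gamma$ into $U(\xi)\log^{A+1}N$, while $\frac{\xi}{N}\gamma \leq \tfrac12\log^{-(A+1)}N \leq \log^{-(A+1)}N$ because $\xi < N/2$, so that this term may be pulled out of the supremum.

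There is no real obstacle at this stage; the only point requiring care is to confirm that every constant produced along the chain, from \cref{lem:ft-bounds-for-measures}, from the \Erdos-\Turan{} inequality invoked inside \cref{lem:turan-bound}, and from the partition count $\lvert\textbf{P}\rvert = 1/\gamma$, is absolute and independent of $\xi$, $K$, and $N$, so that the reduction to the single scalar quantity $U(\xi)$ and the passage of the supremum through the manipulations are both legitimate. The genuine analytic content — actually estimating $U(\xi)$ by van der Corput's method — lies in the results that follow, not in this corollary.
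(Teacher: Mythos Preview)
Your proposal is correct and matches the paper's approach; the paper in fact states this corollary without an explicit proof, treating it as immediate from the displayed bound on $\sum_i|\hat\mu_i(k)|$ together with \cref{lem:turan-bound} and the substitution $\gamma=\log^{-(A+1)}N$, exactly as you describe.
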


We will show that $\sup_{\xi} U(\xi) \lesssim 1/M^{t}$, where $t$ depends only on $c$, which will then give us the desired result that the left-hand side of \cref{eqn:exp-sum} $\lesssim 1/\log^{A + 1} N$. We divide the range of summation into dyadic intervals and use van der Corput bounds on each of them.

\begin{Proposition}\label{prop:estimate-sigma2-details}
    We have
    \[
        \sup_{\mkern-10mu \frac{N}{\log^b N} \leq \xi \leq \frac{N}{2}}
        \biggl\lvert \frac{1}{M} \sum_{m=1}^M \e\biggl(\!-\frac{\xi \floor{m^c}}{N} \biggr)\biggr\rvert 
        = \Oh\biggl(\frac{1}{\log^{A+1} N}\biggr).
    \]
\end{Proposition}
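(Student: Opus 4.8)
The plan is to invoke \cref{lem:turan-bound} and \cref{cor:bound-for-turan-bound} with $\gamma = 1/\log^{A+1} N$, which reduces the proposition to a bound of the shape $\sup_{\xi} U(\xi) \lesssim M^{-t}$ for some $t = t(c) \in (0,1)$. Indeed, if that holds then, since $M = N^{1/c}/\log^B N$, we have $M^{-t}\log^{A+1} N = N^{-t/c}\log^{Bt + A + 1} N = \oh(\log^{-(A+1)} N)$, so \cref{cor:bound-for-turan-bound} yields $\sup_{\xi}\bigl\lvert \mathop{\E}_{m \leq M}\e(-\xi\floor{m^c}/N)\bigr\rvert \lesssim M^{-t}\log^{A+1} N + \log^{-(A+1)} N \lesssim \log^{-(A+1)} N$. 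So everything comes down to bounding, uniformly over $N/\log^b N \leq \xi < N/2$ and $1 \leq k \leq K$, the exponential sums $\mathop{\E}_{m \leq M}\e(u m^c)$ making up $U(\xi)$, i.e.\ with $u = k - \xi/N$; the companion terms with $u = k + \xi/N$ and $u = k$ appearing in \cref{lem:ft-bounds-for-measures} are treated identically. The essential point is that every such $u$ satisfies $\tfrac12 \leq u \leq k$, uniformly in $\xi$ in the stated range.

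To bound $\sum_{m=1}^M \e(u m^c)$ for a fixed $u$ with $\tfrac12 \leq u \leq K$, I would fix the non-negative integer $q = \lceil c \rceil - 2$ (so that $q + 2 > c$, using that $c \notin \Z$), set $\lambda = 2^{q+2} - 2$, and decompose $[1,M]$ into the $\Oh(\log M)$ dyadic blocks $[2^{j}, 2^{j+1}]$, $0 \leq j \leq \log_2 M$. On each block I apply \cref{thm:vdc-estimate} to $f(m) = u m^c$ with $X = 2^{j}$ and $F = u X^c$: since $f^{(r)}(m) = u\, c(c-1)\cdots(c-r+1)\, m^{c-r}$ and the falling factorial is a nonzero constant (again because $c$ is not an integer), the hypothesis \eqref{eqn:derv-bound} holds for all $1 \leq r \leq q+2$ with $B_1, B_2$ depending only on $c$. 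This gives
\[
    \Bigl\lvert \sum_{m \in [2^{j}, 2^{j+1}]} \e(u m^c) \Bigr\rvert \lesssim u^{1/\lambda} (2^{j})^{1 + (c - q - 2)/\lambda} + (2^{j})^{1-c} u^{-1}.
\]
Summing the first terms — a geometric series of ratio $2^{1 + (c - q - 2)/\lambda} > 1$, hence dominated by the top block $2^{j} \asymp M$, because $0 < 1 + (c-q-2)/\lambda < 1$ — and the second terms — a convergent geometric series, since $c > 1$ — gives $\lvert\sum_{m=1}^M \e(u m^c)\rvert \lesssim u^{1/\lambda} M^{1 - \delta} + 1$, where $\delta = (q + 2 - c)/\lambda > 0$ depends only on $c$. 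Hence $\lvert\mathop{\E}_{m \leq M}\e(u m^c)\rvert \lesssim u^{1/\lambda} M^{-\delta} + M^{-1}$.

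Feeding $u = k - \xi/N \leq k$ into $U(\xi)$ and using $\sum_{k=1}^K k^{1/\lambda - 1} \lesssim K^{1/\lambda}$ then gives, uniformly in $\xi$,
\[
    U(\xi) \lesssim \frac{1}{K} + M^{-\delta} K^{1/\lambda} + \frac{\log K}{M}.
\]
Choosing $K = \lceil M^{\delta\lambda/(\lambda+1)} \rceil$ balances the first two terms, and since $\delta\lambda/(\lambda+1) = (q+2-c)/(2^{q+2}-1) < 1$ the third term is negligible; thus $\sup_\xi U(\xi) \lesssim M^{-t}$ with $t = (q+2-c)/(2^{q+2}-1)$, a constant depending only on $c$. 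Combined with the reduction of the first paragraph, this proves the proposition.

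The main obstacle is the van der Corput step: one must verify the derivative hypotheses of \cref{thm:vdc-estimate} with constants independent of the block, pick $q$ so that the exponent $1 + (c-q-2)/\lambda$ of the dyadic sum lies strictly between $0$ and $1$ (so the sum is controlled by the block $2^{j} \asymp M$ and genuinely beats the trivial bound $M$), and — the subtle bookkeeping point — keep every estimate uniform in $\xi$, which is possible precisely because $k - \xi/N$ is trapped between $\tfrac12$ and $k$ for all admissible $\xi$. Everything downstream (the geometric summation, assembling $U(\xi)$, and optimizing $K$) is routine.
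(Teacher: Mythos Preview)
Your proposal is correct and follows essentially the same route as the paper: reduce via \cref{lem:turan-bound} and \cref{cor:bound-for-turan-bound}, dyadically decompose $\sum_{m\le M}\e(um^c)$, apply \cref{thm:vdc-estimate} on each block with $q=\lceil c\rceil-2$, sum the resulting geometric series, and optimize $K$ to obtain $U(\xi)\lesssim M^{(c-q-2)/(2^{q+2}-1)}$. Your write-up is in fact more careful than the paper's on two points: you explicitly track the $X/F$ term from \cref{thm:vdc-estimate} (the paper dismisses it with ``$X/F\lesssim 1$''), and you spell out why $1+(c-q-2)/\lambda\in(0,1)$ so the dyadic sum is dominated by its top block.
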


\begin{proof}
    For each $j \in \N$, let $S_j = [2^{j}, 2^{j + 1}) \cap [1, M]$. Then 
    \begin{equation}\label{eqn:dyadic-sum}
        \sum_{m \leq M} \e\biggl(\Bigl(\! -\frac{\xi}{N} + k \Bigr) m^c \biggr) 
        = \sum_j \sum_{m \in S_j} \e\biggl(\Bigl(\! -\frac{\xi}{N} + k \Bigr) m^c \biggr).
    \end{equation}

    For fixed $k$ and $\xi$, the inner sum is amenable to the van der Corput estimate \cref{thm:vdc-estimate}. Choose $X = 2^j$ and $q = \ceil{c - 2}$. Then \cref{eqn:derv-bound} is satisfied with $B_i$ depending only on $c$, and $F = (-\xi/N + k) X^c \approx kX^c$ . For the terms involving $\e(km^c)$ and $\e\bigl((k + \xi/N) m^c\bigr)$, the proof is identical, since $kX^c \approx (k + \xi/N) X^c$.
    Since \(X/F \lesssim 1\), we obtain
    \begin{equation*}
        \sum_{m \in S_j} \e\biggl(\Bigl(\! -\frac{\xi}{N} + k \Bigr) m^c \biggr)
        \lesssim k^\frac{1}{2^{q + 2} -2 } 2^{j \bigl(1 + \frac{c - q - 2}{2^{q + 2} - 2} \bigr)}.
    \end{equation*}
    
    Summing over $j$ up to $\floor*{\log_2 M}$ and noting that $j = 2^{j + 1} \approx M$, the upper limit, we obtain 
    \begin{equation*}
        \eqref{eqn:dyadic-sum} \lesssim k^{\frac{1}{2^{q + 2} - 2}} M^{1 + \frac{c - q - 2}{2^{q + 2} - 2}}.
    \end{equation*}
    Thus,
    \begin{align*}
        U(\xi) &\lesssim \frac{1}{K} + M^{\frac{c - q - 2}{2^{q + 2} - 2}} \sum_{k \leq K} k^{\frac{1}{2^{q + 2} - 2} - 1}\\
        &\lesssim \frac{1}{K} + M^{\frac{c - q - 2}{2^{q + 2} - 2}}K^{\frac{1}{2^{q + 2} - 2}}.
    \end{align*}
    Choosing $K$ so that the two summands above are equal, we see that $U(\xi) \lesssim M^\frac{c - q - 2}{2^{q + 2} - 1}$, independent of $\xi$.
\end{proof}

Now we can complete the proof. As noted in the beginning of the section, we factor out the exponential sum, use the bounds we obtained to decouple the product, and use Parseval's identity for the term involving $\lvert\hat{\Lambda}\rvert^2$.

\begin{Lemma}\label{lem:sigma2-proof-works}
    \cref{prop:estimate-sigma2-details} implies \cref{prop:estimate-sigma2}.
\end{Lemma}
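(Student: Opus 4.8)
The plan is to factor the supremum provided by \cref{prop:estimate-sigma2-details} out of the sum defining $\Sigma_2$, which decouples the exponential-sum factor from the $\lvert\hat\Lambda\rvert^2$ factor, and then to control the remaining second moment of $\hat\Lambda/N$ by Parseval's identity together with the Prime Number Theorem.

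Concretely, I would first apply the triangle inequality to obtain
\[
    \lvert\Sigma_2\rvert
    \leq \Biggl(\sup_{\frac{N}{\log^b N}\leq\xi<\frac{N}{2}}\biggl\lvert\frac1M\sum_{m=1}^M\e\biggl(\!-\frac{\xi\floor{m^c}}{N}\biggr)\biggr\rvert\Biggr)
    \sum_{\xi=\frac{N}{\log^b N}}^{\frac{N}{2}}\biggl\lvert\frac{\hat\Lambda(\xi)}{N}\biggr\rvert^2 .
\]
The exponential sum here matches verbatim the one estimated in \cref{prop:estimate-sigma2-details}, so the first factor is $\Oh(1/\log^{A+1}N)$. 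For the second factor, since the summands are non-negative I would extend the range to $\xi = 1, \ldots, N$, and then Parseval's identity (\cref{thm:parseval}) gives $\sum_{\xi=1}^{N}\lvert\hat\Lambda(\xi)/N\rvert^2 = \frac1N\sum_{n=1}^N\Lambda(n)^2$. Bounding $\Lambda(n)\leq\log N$ and invoking the Prime Number Theorem (\cref{thm:prime-number-equiv}), exactly as in the proof of \cref{lem:parseval-bound}, this is $\Oh(\log N)$. Multiplying the two estimates yields
\[
    \lvert\Sigma_2\rvert = \Oh(\log N)\cdot\Oh\bigl(1/\log^{A+1}N\bigr) = \Oh\bigl(1/\log^A N\bigr),
\]
which is precisely the assertion of \cref{prop:estimate-sigma2}.

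I do not expect any real obstacle in this step: the substantive analysis (the partition/\Erdos-\Turan/van der Corput estimates) has already been carried out inside \cref{prop:estimate-sigma2-details}, and what remains is the routine decoupling argument above. The only points worth a line of care are that the harmless sign in the argument of $\e(\cdot)$ does not affect the absolute value, and that enlarging the range of the $\xi$-summation is legitimate because the terms $\lvert\hat\Lambda(\xi)/N\rvert^2$ are non-negative, so the Parseval bound indeed dominates the truncated sum appearing in $\Sigma_2$.
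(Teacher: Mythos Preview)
Your proposal is correct and follows essentially the same approach as the paper: factor out the supremum of the exponential sum, invoke \cref{prop:estimate-sigma2-details} for the $\Oh(1/\log^{A+1}N)$ bound, extend the $\xi$-sum and apply Parseval together with $\Lambda(n)\leq\log N$ and the Prime Number Theorem to get $\Oh(\log N)$, then multiply. The paper's write-up is nearly identical, differing only in cosmetic details (it extends the $\xi$-range to $0,\ldots,N-1$ rather than $1,\ldots,N$, and appends a remark combining the bound with \cref{prop:estimate-sigma1}).
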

\begin{proof}
    Indeed, by \cref{prop:estimate-sigma2-details} we obtain
    \begin{equation*}
        \sup_{\mkern-10mu \frac{N}{\log^b N} \leq \xi \leq \frac{N}{2}}
            \biggl\lvert \frac{1}{M} \sum_{m=1}^M \e\biggl(\! -\frac{\xi \floor{m^c}}{N} \biggr)\biggr\rvert 
            = \Oh\biggl(\frac{1}{\log^{A+1} N}\biggr).
    \end{equation*}
    Therefore,
    \begin{align*}
        \Biggl\lvert \sum_{\xi = \frac{N}{\log^b N}}^{\frac{N}{2}} \!\biggl\lvert \frac{\hat{\Lambda}(\xi)}{N} \biggr\rvert^2 \! \Biggl(\frac{1}{M} \sum_{m = 1}^M \e\biggl(\! -\frac{\xi \floor{m^c}}{N} \biggr)\Biggr)\Biggr\rvert
        &\leq \;\,\smashoperator[l]{\sup_{\mkern-11mu \frac{N}{\log^b N} \leq \xi \leq \frac{N}{2}}} \biggl\lvert \frac{1}{M} \sum_{m = 1}^{M} \e\biggl(\! -\frac{\xi \floor{m^c}}{N} \biggr)\biggr\rvert 
        \cdot \sum_{\mathclap{r = \frac{N}{\log^b N}}}^{\frac{N}{2}}\: \biggl\lvert \frac{\hat{\Lambda}(r)}{N} \biggr\rvert^2\\
        &\leq \;\,\smashoperator[l]{\sup_{\mkern-11mu \frac{N}{\log^b N} \leq \xi \leq \frac{N}{2}}} \biggl\lvert \frac{1}{M} \sum_{m = 1}^{M} \e\biggl(\! -\frac{\xi \floor{m^c}}{N} \biggr)\biggr\rvert 
        \cdot \sum_{r = 0}^{N - 1}\: \biggl\lvert \frac{\hat{\Lambda}(r)}{N} \biggr\rvert^2\\
        &= \;\,\smashoperator[l]{\sup_{\mkern-11mu \frac{N}{\log^b N} \leq \xi \leq \frac{N}{2}}} \biggl\lvert \frac{1}{M} \sum_{m = 1}^{M} \e\biggl(\! -\frac{\xi \floor{m^c}}{N} \biggr)\biggr\rvert 
        \cdot\frac{1}{N} \sum_{r = 0}^{N - 1} \bigl(\Lambda(r) \bigr)^2,
    \end{align*}
    where the last equality holds due to Parseval's identity (\cref{thm:parseval}).
    
    Now, since 
    \begin{equation*}
        \frac{1}{N} \sum_{\xi = 0}^{N - 1} \bigl(\Lambda(\xi)\bigr)^2 
        \leq \frac{1}{N} \sum_{\xi = 0}^{N - 1} \Lambda(\xi) \cdot \log N,
    \end{equation*}
    we have by the Prime Number Theorem that $\frac{1}{N} \sum_{\xi = 0}^{N - 1} \bigl(\Lambda(\xi) \bigr)^2 = \Oh(\log N)$. Therefore,
    \begin{align*}
        \sup_{\mkern-10mu \frac{N}{\log^b N} \leq \xi \leq \frac{N}{2}}
        \biggl\lvert \frac{1}{M} \sum_{m = 1}^{M} \e\biggl(\! -\frac{\xi \floor{m^c}}{N} \biggr)\biggr\rvert 
        \cdot \frac{1}{N}\sum_{r = 0}^{N - 1} \bigl(\Lambda(r)\bigr)^2 
        &= \Oh\biggl(\frac{1}{\log^{A + 1} N} \biggr) \Oh(\log N)\\
        &= \Oh\biggl(\frac{1}{\log^A N} \biggr),
    \end{align*}
    as required. Combining the above with \cref{prop:estimate-sigma1}, we have 
    \begin{equation*}
        \sum_{\xi = 1}^{\frac{N}{2}} \biggl\lvert \frac{\hat{\Lambda}(\xi)}{N} \biggr\rvert^2 \Biggl(\frac{1}{M} \sum_{m = 1}^M \e\biggl(\! -\frac{\xi \floor{m^c}}{N} \biggr)\Biggr)
        = \Oh \biggl(\frac{1}{\log^A N}\biggr),
    \end{equation*}
    and thus, by symmetry,
    \begin{equation*}
        \sum_{\xi = 1}^{N-1} \biggl\lvert \frac{\hat{\Lambda}(\xi)}{N} \biggr\rvert^2 \Biggl(\frac{1}{M} \sum_{m = 1}^M \e\biggl(\! -\frac{\xi \floor{m^c}}{N} \biggr)\Biggr) 
        = \Oh\biggl(\frac{1}{\log^A N}\biggr).
    \end{equation*}
\end{proof}

% FURTHER DIRECTIONS
\section{Further Directions}
\hspace{20pt} In this section we give some possible extensions of our result. As mentioned in the introduction, the case of \(1 < c < 2\) remains open. However, we not only expect our result to hold for this case, but also for all fractional polynomials.
\begin{Conjecture}\label{conj:frac} 
    Let $f(x) = a_1 x^{c_1} + a_2 x^{c_2} + \ldots + a_k x^{c_k}$, $A > 1$ be given, $c_1 > 0$ a non-integer, $a_1 \neq 0$, and $c_1 > c_2 > \ldots > c_k$. Then there exists $B = B(f,A)$ such that 
    \[
        \mathop{\E}_{m \leq M} \mathop{\E}_{n \leq N} \Lambda(n) \Lambda\bigl(n + \floor{f(x)}\bigr) 
        = 1 + \Oh\biggl(\frac{1}{\log^A N} \biggr),
    \] 
    where $M = \frac{N^{1/c_1}}{\log^{B} N}$.
\end{Conjecture}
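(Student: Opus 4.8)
The plan is to follow the proof of \cref{thm:main-thm} step by step, replacing $m^c$ by $f(m)$ throughout, and to isolate the single place where a genuinely new estimate is needed. We may assume $a_1>0$, so that $f$ is positive and strictly increasing on $[m_0,\infty)$ for some $m_0=m_0(f)$; the $\Oh_f(1)$ terms with $m<m_0$ contribute $\Oh_f(1/M)$ to the average and are discarded. Since $a_1\ne0$ and $c_1>c_2>\cdots>c_k$, one has $\max_{m\le M}\lvert f(m)\rvert\asymp_f M^{c_1}$, so \cref{prop:fourier-transform} carries over verbatim: reducing the arguments of $\Lambda$ modulo $N$ costs $\Oh\bigl((M^{c_1}/N)\log^2 N\bigr)=\Oh(\log^{2-Bc_1}N)$, the cross-characters cancel exactly, and one is reduced (as in \cref{thm:c-greater-than-2}, using $\lvert\hat{\Lambda}(\xi)\rvert=\lvert\hat{\Lambda}(-\xi)\rvert$ to restrict to $\xi\le N/2$) to bounding
\[
    \sum_{\xi=0}^{N-1}\Bigl\lvert\tfrac{\hat{\Lambda}(\xi)}{N}\Bigr\rvert^2\,\mathop{\E}_{m\le M}\e\Bigl(\!-\tfrac{\xi\floor{f(m)}}{N}\Bigr),
\]
decomposed into the main term $\xi=0$ (handled by \cref{thm:prime-number-equiv}), a sum $\Sigma_1$ over $1\le\xi\le N/\log^b N$, and a sum $\Sigma_2$ over $N/\log^b N\le\xi\le N/2$.

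For $\Sigma_2$ the argument of \cref{prop:estimate-sigma2} transfers with only cosmetic changes. The partition and \Erdos--\Turan{} reduction of \cref{lem:partition-bound} through \cref{lem:turan-bound} is insensitive to the precise shape of $f$, so one is left to bound $\mathop{\E}_{m\le M}\e(\theta f(m))$ for $\theta$ of the form $k\pm\xi/N$ or $k$, with $1\le k\le K$ and $N/\log^b N\le\xi<N/2$, uniformly in $\xi$. On a dyadic block $[X,2X]\subseteq[1,M]$ with $X\gtrsim_f1$ the leading term of $f$ dominates, and since $c_1$ is not an integer its relevant derivatives do not vanish, so $\lvert f^{(r)}(x)\rvert\asymp_f X^{c_1-r}$ for $1\le r\le q+2$ with $q=\max\{0,\ceil{c_1-2}\}$, and \cref{thm:vdc-estimate} applies with $F\asymp_f\lvert\theta\rvert X^{c_1}$ (here $\lvert\theta\rvert\ge1/2$, using only $\xi/N<1/2$); the finitely many blocks with $X\lesssim_f1$ contribute $\Oh_f(1)$ in total. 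Summing the van der Corput bounds over dyadic blocks and then over $k$ with the optimal choice of $K$ gives $\sup_\xi U(\xi)\lesssim_f M^{-t}$ for some $t=t(c_1)>0$ (a positive power of $M$ is saved for every non-integer $c_1>0$), whence $\Sigma_2=\Oh(\log^{-A}N)$ exactly as in \cref{lem:sigma2-proof-works}.

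The sum $\Sigma_1$ is where the difficulty lies. H\"{o}lder's inequality with the split $2=\tfrac1k+(2-\tfrac1k)$, Parseval's identity (\cref{lem:parseval-bound}), and the uniform bound on $\lvert\hat{\Lambda}(\xi)\rvert$ over $\xi\le N/\log^b N$ (\cref{lem:rational-approx-bound} and \cref{lem:small-xi-bound}) are all independent of $f$, so the scheme of \cref{prop:estimate-sigma1} reduces matters to an analogue of \cref{lem:poulias-bound}: the bound $\sum_{\xi=1}^{N/\log^b N}\bigl\lvert\mathop{\E}_{m\le M}\e(-\xi\floor{f(m)}/N)\bigr\rvert^{2k}=\Oh(\log^{Bc_1}N)$, which after the orthogonality computation of \cref{lem:poulias-bound} is equivalent to showing that the number of $(m_1,\dots,m_{2k})\in[1,M]^{2k}$ with $\lvert f(m_1)+\cdots+f(m_k)-f(m_{k+1})-\cdots-f(m_{2k})\rvert\le k$ is $\Oh_f(M^{2k-c_1})$ once $k$ is large enough in terms of $c_1$. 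When $0<c_1<1$ this already follows from monotonicity of $f$, since for fixed $m_2,\dots,m_{2k}$ the admissible values of $m_1$ form an interval of length $\lesssim_f k/f'(M)\asymp_f kM^{1-c_1}$, giving $\lesssim_f kM^{2k-c_1}$. For $c_1>1$, however, the same argument only yields $M^{2k-1}$, which is insufficient, and the sharp exponent $M^{2k-c_1}$ requires re-running Poulias's harmonic-analytic machinery (Weyl differencing together with van der Corput and Vinogradov-type mean value estimates) for the form $f$ rather than for a single power $x^c$; this is not covered by the special case quoted in \cref{thm:poulias}, and I expect it to be the main obstacle. For $1<c_1<2$ one additionally runs into precisely the failure of a lemma of Poulias mentioned after \cref{cor:infinitely-many-pairs}, so that sub-case would likely need a new idea (or a restriction to $c_1>2$ with the lower exponents $c_2,\dots,c_k$ arbitrary). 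Granting such a Diophantine-inequality estimate, the remainder is bookkeeping identical to the single-power case: one chooses $b=b(A,B,c_1,k)$ so that the product of the three error factors in \cref{prop:estimate-sigma1} is $\Oh(\log^{-A}N)$, and combining $\Sigma_1$, $\Sigma_2$, and the main term yields the conjecture.
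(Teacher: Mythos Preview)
The statement you are addressing is labelled a \emph{Conjecture} in the paper, and the paper offers no proof of it---only the remark in the Further Directions section that attacking it by the paper's method ``requires a Vinogradov mean value theorem for fractional polynomials'' of Poulias type, which the authors ``believe \ldots\ should be technically demanding but possible.'' There is therefore no paper proof to compare your attempt against.

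Your proposal is, by your own account, not a complete proof either: you correctly isolate the analogue of \cref{thm:poulias} for the form $f$ (the count $\lesssim_f M^{2k-c_1}$ of solutions to $\lvert f(m_1)+\cdots-f(m_{2k})\rvert\le k$) as the one genuinely new input, and you explicitly write that granting it ``the remainder is bookkeeping.'' This diagnosis matches the paper's own assessment exactly. The structural reductions you sketch are sound: the Fourier decomposition of \cref{prop:fourier-transform} only uses $\max_{m\le M}\lvert f(m)\rvert\asymp_f M^{c_1}$; the $\Sigma_2$ bound via partitioning, \Erdos--\Turan{}, and van der Corput is indeed insensitive to the shape of $f$ once one has $\lvert f^{(r)}(x)\rvert\asymp_f X^{c_1-r}$ on dyadic blocks (which holds because $c_1\notin\Z$ forces the leading coefficient of each derivative to be nonzero); and your elementary monotonicity count for $0<c_1<1$ is correct. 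But for $c_1>1$ the required Diophantine-inequality estimate is not available in the literature you cite and you do not supply it, so what you have written is a programme rather than a proof---precisely the status the paper itself assigns to \cref{conj:frac}.
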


Attacking this using our method requires a Vinogradov mean value theorem for fractional polynomials, of the type proven in \cite{poulias2021}. We believe this should be technically demanding but possible. Moreover, we actually expect \cref{conj:frac} to hold for functions from a Hardy field satisfying some growth conditions, but formulating this would be tedious so we refrain from doing so.

Secondly, we expect a similar result to hold for longer progressions.

\begin{Conjecture}
    Let $f_1, f_2, \ldots, f_k$ satisfy the same assumptions as $f$ in \cref{conj:frac}, let $A > 1$, and let $d$ be the largest of the degrees of the $f_i$'s. Then there exists $B = B(f_1, \ldots, f_k, A)$ such that 
    \[
        \mathop{\E}_{m \leq M} \mathop{\E}_{n \leq N} \prod_{i = 1}^k \Lambda\bigl(n + \floor{f_i(m)}\bigr) 
        = 1 + \Oh\biggl(\frac{1}{\log^A N} \biggr),
    \] 
    where $M = \frac{N^{1/d}}{log^{B} N}$.
\end{Conjecture}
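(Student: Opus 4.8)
\emph{Proof strategy.} As the statement is conjectural we describe an attack rather than a complete proof; the natural framework is the Fourier-analytic one of \cref{prop:fourier-transform}, run by induction on $k$. Write $T_k=\mathop{\E}_{m\leq M}\mathop{\E}_{n\leq N}\prod_{i=1}^{k}\Lambda\bigl(n+\floor{f_i(m)}\bigr)$ and let $c_i$ be the leading exponent of $f_i$, so $\max_i c_i=d$ and $\floor{f_i(M)}<N$ for $N$ large. Passing each $\Lambda$ to $\Z/N\Z$ introduces an error $\lesssim M^{d}\log^{k}N/N=\log^{k-Bd}N$, which is $\Oh(\log^{-A}N)$ once $B$ is large in terms of $A$, $k$, and the $c_i$; applying Fourier inversion (\cref{thm:fourier}) to every factor and summing over $n$ forces $\xi_1+\cdots+\xi_k\equiv 0\pmod N$, giving
\[
    T_k=\frac1{N^k}\sum_{\substack{\xi_1,\dots,\xi_k\in\Z/N\Z\\ \xi_1+\cdots+\xi_k\equiv 0}}\prod_{i=1}^{k}\hat\Lambda(\xi_i)\cdot\mathop{\E}_{m\leq M}\e\!\biggl(\!-\frac1N\sum_{i=1}^{k}\xi_i\floor{f_i(m)}\biggr)+\Oh\!\biggl(\frac1{\log^{A}N}\biggr).
\]
The diagonal term $\xi_1=\cdots=\xi_k=0$ is $\bigl(\tfrac1N\sum_{n\leq N}\Lambda(n)\bigr)^{k}=1+\Oh(e^{-t\sqrt{\log N}})$ by \cref{thm:prime-number-equiv}, supplying the predicted main term.

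It remains to control the off-diagonal tuples, which we organise by their support: if $\xi_i=0$ exactly for $i$ in a proper subset $J$, the contribution factors as $\bigl(\tfrac1N\sum_n\Lambda(n)\bigr)^{|J|}$ times the corresponding sum for the sub-family $(f_i)_{i\notin J}$ over tuples with all coordinates non-zero. Denoting the latter ``all-frequencies-non-zero'' sum by $\Xi(g_1,\dots,g_\ell)$ for an $\ell$-term family, everything reduces to proving $\Xi(f_{i_1},\dots,f_{i_\ell})=\Oh(\log^{-A}N)$ for every sub-family, by induction on $\ell$. For $\ell=1$ the index set $\{\xi\neq 0:\xi\equiv 0\}$ is empty, so $\Xi=0$; for $\ell=2$ the constraint gives $\xi_2=-\xi_1$, the weight becomes $|\hat\Lambda(\xi_1)|^2$, and after translating $n$ and absorbing the bounded discrepancy between $\floor{g_1(m)}-\floor{g_2(m)}$ and $\floor{(g_1-g_2)(m)}$ this is exactly \cref{conj:frac} for the fractional polynomial $g_1-g_2$. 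The genuinely new case is $\ell=k\geq 3$, where the constraint leaves at least two free frequencies.

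For $\ell=k\geq 3$ one splits $\Xi$ according to whether some $|\xi_i|$ (taken in $(-N/2,N/2]$) exceeds $N/\log^{b}N$. In the \emph{low-frequency} range all $|\xi_i|\leq N/\log^{b}N$, so $\sum_i\xi_i\floor{f_i(m)}=\sum_i\xi_i f_i(m)+\Oh(N/\log^{b}N)$ and the exponential sum is, up to a factor $1+\Oh(\log^{-b}N)$, $\mathop{\E}_{m\leq M}\e\bigl(-\tfrac1N\sum_i\xi_i f_i(m)\bigr)$; one decouples $\prod_i|\hat\Lambda(\xi_i)|$ by H\"older, controls $\sum_\xi|\hat\Lambda(\xi)/N|^{2}$ by Parseval (\cref{thm:parseval}) as in \cref{lem:parseval-bound}, and interprets a high even moment of the exponential sum --- summed over the admissible $\xi$-tuples and over $m_1,\dots,m_{2k'}$ --- as a count of near-solutions of the system $\bigl|\sum_{i=1}^{k}\xi_i\bigl(f_i(m_1)+\cdots+f_i(m_{k'})-f_i(m_{k'+1})-\cdots-f_i(m_{2k'})\bigr)\bigr|\leq C$ with $(\xi,m)$ ranging over boxes. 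A bound of the expected order for this count --- a common generalisation of \cref{thm:poulias} (one monomial, trivial $\xi$-dependence) and of Vinogradov's mean-value theorem --- is the key missing ingredient, and, as in the $c\leq 2$ obstruction for $k=2$, is the step most likely to force a restriction such as $\min_i c_i>$ (a threshold growing with $k$). In the \emph{high-frequency} range the dominant monomial of $g_{\vec\xi}=\sum_i\xi_i f_i$ has coefficient bounded below --- away from the measure-zero degeneracies where leading terms cancel, handled separately --- so a multi-dimensional form of the \Erdos--\Turan{} step (\cref{lem:partition-bound}, \cref{lem:turan-bound}), applied to the joint distribution of $\bigl(\{f_i(m)\}\bigr)_i$, together with \cref{thm:vdc-estimate}, gives $\bigl|\mathop{\E}_{m\leq M}\e(-g_{\vec\xi}(m)/N)\bigr|=\Oh(\log^{-A-1}N)$ uniformly. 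However, since for $k\geq 3$ two frequencies remain free, the crude bound $N^{-k}\!\sum_{\sum\xi_i\equiv 0}\prod_i|\hat\Lambda(\xi_i)|\lesssim N^{(k-2)/2}\log^{\Oh(1)}N$ obtained from Parseval and Cauchy--Schwarz is too lossy to absorb this saving; one must instead exploit the sparsity of the major arcs (that $|\hat\Lambda(\xi)|\gg N/\log^{C}N$ only for $\xi$ within $\log^{\Oh(C)}N$ short arcs about rationals of small denominator, by Siegel--Walfisz), keeping at most one unabsorbed $\hat\Lambda$-factor per Cauchy--Schwarz step. This amounts to a genuine Hardy--Littlewood major/minor-arc analysis, and on the major arcs one must also verify that the singular series and singular integral multiply to $1$ --- the rigorous form of the heuristic that $m\mapsto\floor{f_i(m)}$ equidistributes in residue classes to small moduli.

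In summary, the inductive scheme transports the main term, but the conjecture needs two new analytic inputs: (a)~a multidimensional Diophantine/mean-value estimate for systems of fractional polynomials generalising \cref{thm:poulias}, used on the low-frequency side; and (b)~a small-moduli equidistribution statement for $\{\floor{f_i(m)}:m\leq M\}$, used to pin the singular series to $1$ on the major arcs. Of these, (a) is the one we expect to be hardest. An alternative is to bypass the circle method and invoke the Green--Tao--Ziegler transference principle, replacing (a)--(b) by Gowers-uniformity (``linear forms'') conditions for the $W$-tricked von Mangoldt function along the shifts $\floor{f_i(m)}$; this would treat all $k$ uniformly but requires higher-order-uniformity information on the fractional-power shifts, i.e.\ the (currently unavailable) quantitative version of the ``nullifies multiplicative structure'' heuristic.
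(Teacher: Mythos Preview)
The paper does not prove this statement: it is presented as a conjecture, with the single remark that it is expected to be ``much harder to prove, and should rely on methods of higher order Fourier analysis.'' There is therefore no proof to compare against; what follows is a comment on your strategy relative to that hint.

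Your primary line of attack is to push the classical (first-order) Fourier framework of \cref{prop:fourier-transform} through by induction on $k$, isolating the diagonal main term and then splitting the off-diagonal $\xi$-tuples into low- and high-frequency ranges. You identify the two genuine gaps correctly: a multidimensional analogue of \cref{thm:poulias} for systems of fractional polynomials on the low-frequency side, and control of the singular series on the major arcs. Both are real obstructions. However, the paper's hint points in a different direction from your main line: for $k\geq 3$ one is averaging a product of three or more shifted copies of $\Lambda$, and such correlations are not governed by the $L^2$ Fourier data $\lvert\hat\Lambda(\xi)\rvert$ alone --- this is exactly the phenomenon that forces higher-order (Gowers) uniformity in the polynomial-progressions work of Green--Tao and Tao--Ziegler. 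Concretely, even granting your ingredient (a), the step where you propose to ``keep at most one unabsorbed $\hat\Lambda$-factor per Cauchy--Schwarz step'' and exploit major-arc sparsity is, for $k\geq 3$, essentially a disguised request for $U^{k-1}$-control of $\Lambda-1$, not something the circle method delivers on its own. Your closing alternative --- transference plus Gowers-uniformity of the $W$-tricked von Mangoldt along the shifts $\floor{f_i(m)}$ --- is much closer to what the paper has in mind, and is probably the framework in which the conjecture should be attacked.

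One smaller point: in your $\ell=2$ reduction you assert that the two-point case ``is exactly \cref{conj:frac} for the fractional polynomial $g_1-g_2$.'' This is not quite right as stated, since $g_1-g_2$ need not satisfy the hypotheses of \cref{conj:frac} (the leading exponents of $g_1$ and $g_2$ may coincide with equal leading coefficients, in which case the difference drops degree and may even acquire an integer leading exponent); these degenerate sub-families would need a separate argument.
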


We expect this to be much harder to prove, and should rely on methods of higher order Fourier analysis. 

Finally, the following problem posed by Frantzikinakis generalizes \cref{cor:infinitely-many-pairs} in two directions; the first statement extends it to arithmetic progressions with common difference in the image of a fractional power, and the second generalizes our result to more than one fractional power.

\begin{Conjecture}[\protect{\cite[Problem 30]{frantzikinakis2016}}]
Let $k \in \N$ and $c$ be a positive real number. Then, for infinitely many pairs $(p, m) \in \P \times \N$, we have $\{p, p + \floor{m^c}, p + 2\floor{m^c}, \ldots, p + k\floor{m^c}\} \subseteq \P$. 

Further, let $c_1, \ldots, c_k$ be positive real numbers. Then, for infinitely many pairs $(p, m) \in \P \times \N$, we have $\{ p, p + \floor{m^{c_1}}, p + \floor{m^{c_2}}, \ldots, p + \floor{m^{c_k}} \} \subseteq \P$.
\end{Conjecture}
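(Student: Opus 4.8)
\noindent\emph{Proof proposal (research plan).}
The plan is to marry the Gowers-uniformity machinery for the primes with the equidistribution estimates for $\floor{m^c}$ that already power the arguments of this paper. Fix $k$ and suitable $B=B(c,k)$, and set $M=N^{1/c}/\log^{B}N$; it suffices to prove the counting bound
\[
    \mathop{\E}_{m \leq M}\,\mathop{\E}_{n \leq N}\,\prod_{i=0}^{k}\Lambda\bigl(n + i\floor{m^c}\bigr) \;\gg_{k}\; 1 ,
\]
since a fixed pair contributes $\oh(1)$ to this average and the prime-power terms $n=p^{j}$, $j\geq 2$, contribute a negligible amount, so the bound forces $\gg NM$ solutions in the box and hence infinitely many admissible $(p,m)$. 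The first step is to isolate the common differences that can support a full progression of primes: a $(k{+}1)$-term progression of integers exceeding $k+1$ forces the common difference $d$ to be divisible by the primorial $W_{k}=\prod_{p\leq k+1}p$, and conversely, when $W_{k}\mid d$ the singular series $\mathfrak{S}_{k}(d)$ of the linear system $\{n+id\}_{i=0}^{k}$ satisfies $\mathfrak{S}_{k}(d)\gg_{k}1$. Since $\mathop{\E}_{m\leq M}\e\bigl(a\floor{m^c}/W_{k}\bigr)\to 0$ for $a\not\equiv 0\pmod{W_{k}}$ — exactly the kind of estimate furnished by the van der Corput bound \cref{thm:vdc-estimate} and used repeatedly in this paper — the integers $\floor{m^c}$ equidistribute modulo $W_{k}$, so a positive proportion of $m\leq M$ are \textbf{good} in the sense $W_{k}\mid\floor{m^c}$. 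It then suffices to show $\mathop{\E}_{n\leq N}\prod_{i=0}^{k}\Lambda(n+id)=\mathfrak{S}_{k}(d)+\oh_{d\to\infty}(1)$ \emph{uniformly} over good $d\leq M^{c}=\oh(N)$, and average over the good $m$.

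For the $n$-sum I would run the transference method. Fix a good $d$, apply the Green--Tao $W$-trick (relative to $\widetilde{W}=\prod_{p\leq w(N)}p$), and decompose $\Lambda=\Lambda^{\#}+\Lambda^{\flat}$, with $\Lambda^{\#}$ the expected main-term weight and $\bigl\lVert\Lambda^{\flat}\bigr\rVert_{U^{k}[N]}=\oh(1)$; for $k\leq 2$ this is Fourier-analytic input of the kind already used here (for $k=2$ it is Green's transference for Roth's theorem in the primes), and for $k\geq 3$ it is the Green--Tao--Ziegler theorem on linear equations in primes. Expanding $\prod_{i}(\Lambda^{\#}+\Lambda^{\flat})(n+id)$ into $2^{k+1}$ terms, the all-$\Lambda^{\#}$ term evaluates by a routine sieve computation to $\mathfrak{S}_{k}(d)+\oh(1)$, uniformly in $d\leq\oh(N)$, while every term containing a factor $\Lambda^{\flat}$ is bounded, via the generalized von Neumann inequality for the system $\{n+id\}_{i=0}^{k}$ against a pseudorandom enveloping majorant, by $\ll\bigl\lVert\Lambda^{\flat}\bigr\rVert_{U^{k}[N]}^{c_{k}}=\oh(1)$. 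The point is that the enveloping majorant and the linear-forms and correlation conditions it satisfies can be taken uniform in $d$ as long as $d=\oh(N)$, so everything is uniform over the good differences; summing over the good $m\leq M$ yields the desired lower bound for every real $c>0$ (for $0<c\leq 1$, where $\floor{m^c}$ is surjective, the statement collapses to the Green--Tao theorem, and for integer $c\geq 2$ it falls within the scope of the Tao--Ziegler estimates for polynomial common differences).

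For the second assertion one proves analogously, with $M=N^{1/\max_{i}c_{i}}/\log^{B}N$, that $\mathop{\E}_{m\leq M}\mathop{\E}_{n\leq N}\Lambda(n)\prod_{i=1}^{k}\Lambda\bigl(n+\floor{m^{c_{i}}}\bigr)\gg 1$. Once $m$ is fixed, $\{n,\,n+d_{1},\dots,n+d_{k}\}$ with $d_{i}=\floor{m^{c_{i}}}$ is again a fixed system of parallel affine forms in $n$ of bounded complexity (at most $k-1$), so the transference step applies verbatim, uniformly in the tuple $(d_{1},\dots,d_{k})$ provided each $d_{i}=\oh(N)$. What changes is the first step: one now needs a positive proportion of $m\leq M$ for which all the $d_{i}$ are simultaneously divisible by the requisite primorials, which requires \emph{joint} equidistribution of $\bigl(\floor{m^{c_{1}}},\dots,\floor{m^{c_{k}}}\bigr)$ modulo a fixed $q$. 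For distinct $c_{i}$ this follows from multidimensional van der Corput / Weyl estimates (and when some $c_{i}$ is an integer, that coordinate merely contributes a $c_{i}$-dependent but still positive local factor; coincident $c_{i}$ collapse two of the conditions) — again an enlargement of the exponential-sum toolkit already present in the paper, supplemented by the \Erdos--\Turan{} inequality (\cref{thm:erdos-turan}) exactly as in the treatment of $\Sigma_{2}$.

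The conceptual input — that $m\mapsto\floor{m^c}$ destroys enough multiplicative structure for the full singular series to be realized on average — should cause no trouble, since it is delivered by van der Corput's method and the \Erdos--\Turan{} inequality as in this paper, and the prime-side input can be quoted from Green--Tao--Ziegler. The real work, and the main obstacle, is the \emph{gluing}: making the Green--Tao--Ziegler asymptotic for $\{n+id\}_{i=0}^{k}$ (respectively $\{n,n+d_{i}\}$) hold uniformly as the common difference(s) grow like a fixed power of $N$, since the standard statements fix the linear system and one must track how the inverse-theorem and generalized-von-Neumann constants depend on the coefficients, checking that the enveloping-sieve parameters can be chosen compatibly with $d=\oh(N)$. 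A further, more structural limitation is that the $\oh(1)$ in the $U^{k}$ inverse theorem is ineffective (or gives only very weak bounds) for $k\geq 4$, so for long progressions only the qualitative conclusion of the conjecture — infinitely many pairs — and not a quantitative analogue of \cref{thm:main-thm} seems accessible by this route.
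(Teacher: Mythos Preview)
The statement you are addressing is a \emph{conjecture}, placed by the paper in its ``Further Directions'' section as an open problem (quoted from Frantzikinakis). The paper does not prove it and offers no proof sketch, so there is no paper's argument to compare your proposal against.

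Your write-up is accordingly a research plan rather than a proof, and you flag it as such. As a strategy it is sensible and well-informed: separating the prime-side input (Green--Tao--Ziegler, transference, generalized von Neumann against a pseudorandom majorant) from the difference-side input (van der Corput/Erd\H{o}s--Tur\'an equidistribution of $\floor{m^c}$ modulo fixed moduli) is the natural decomposition, and you correctly isolate the divisibility constraint $W_k\mid d$ that forces the singular series to be bounded below. You also correctly identify the crux: the Green--Tao--Ziegler asymptotic for the linear system $\{n+id\}_{i=0}^{k}$ is stated for fixed forms, and the entire difficulty is obtaining it \emph{uniformly} over $d$ growing like a power of $N$. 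That uniformity is not a bookkeeping matter---the inverse $U^k$ theorem and the pseudorandomness verification both have implicit dependencies one would need to track---and your plan does not indicate how to achieve it beyond asserting that the enveloping-sieve parameters ``can be taken uniform.'' This is precisely why the statement remains a conjecture. One further caution: for the second assertion, when some $c_i$ coincide or differ by integers the joint equidistribution of $(\floor{m^{c_1}},\dots,\floor{m^{c_k}})$ can degenerate in ways your sketch glosses over, and the local analysis (which residue classes support a prime tuple) is more delicate than a single primorial divisibility condition.

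In short: there is no discrepancy with the paper, because the paper makes no claim to a proof here; your plan is a plausible line of attack on an open problem, with the uniformity-in-$d$ step being the genuine gap that would need new ideas to close.
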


\printbibliography

%AFFILIATION
\bigskip
\footnotesize
\noindent
Bora \c{C}al{\i}m\\
\textsc{Ko\c{c} University}\\
\href{mailto:bcalim21@ku.edu.tr}
{\texttt{bcalim21@ku.edu.tr}}
\\ \\
Ioannis Iakovakis\\
\textsc{Aristotle University of Thessaloniki}\\
\href{mailto:iiakovak@math.auth.gr}
{\texttt{iiakovak@math.auth.gr}}
\\ \\
Sophie Long\\
\textsc{Queen Mary University of London}\\
\href{mailto:ah23211@qmul.ac.uk}
{\texttt{ah23211@qmul.ac.uk}}
\\ \\
Jack Moffatt\\
\textsc{Yale University}\\
\href{mailto:jack.moffatt@yale.edu}
{\texttt{jack.moffatt@yale.edu}}
\\ \\
Deborah Wooton\\
\textsc{University of Utah}\\
\href{mailto:deborah.wooton@utah.edu}
{\texttt{deborah.wooton@utah.edu}}

\end{document}